\definecolor{darkgreen}{rgb}{0,0.5,0}
\definecolor{darkred}{rgb}{0.7,0,0}
\theoremstyle{plain}
\newtheorem{lemma}{Lemma}[section]
\newtheorem{thm}[lemma]{Theorem}
\newtheorem{cor}[lemma]{Corollary}
\newtheorem{pb}[lemma]{Problem}
\theoremstyle{definition}
\newtheorem{defn}[lemma]{Definition}
\newtheorem{rmk}[lemma]{Remark}
\numberwithin{equation}{section}
\newcommand{\of}{{\circ}}
\newcommand{\boundary}{\partial}
\newcommand{\moll}[1]{\hat{  #1}}
\newcommand{\partt}{ {\frac{\partial}{\partial t} } }
\renewcommand{\phi}{\varphi}
\newcommand{\ti}{\tilde}
\newcommand{\parts}{\frac{\partial}{\partial s} }
\newcommand{\al}{\alpha}
\newcommand{\be}{\beta}
\newcommand{\ga}{\gamma}
\newcommand{\de}{\delta}
\newcommand{\la}{\lambda}
\newcommand{\si}{\sigma}
\newcommand{\ep}{\varepsilon}
\newcommand{\curlK}{\mathcal K}
\newcommand{\curlX}{\mathcal X}
\newcommand{\curlR}{\mathcal R}
\newcommand{\Cone}{\mathcal C}
\newcommand{\Id}{Id}
\newcommand{\R}{\ensuremath{{\mathbb R}}}
\newcommand{\Sp}{\ensuremath{{\mathbb S}}}
\newcommand{\N}{\ensuremath{{\mathbb N}}}
\newcommand{\B}{\ensuremath{{\mathbb B}}}
\newcommand{\T}{\ensuremath{{\mathbb T}}}
\newcommand{\downto}{\searrow}
\newcommand{\lap}{\Delta}
\newcommand{\grad}{\nabla}
\newcommand{\vol}{{ \rm Vol}}
\newcommand{\norm}[1]{\left\Vert#1\right\Vert}  
\DeclareMathOperator{\AVR}{AVR}
\newcommand{\beq}{\begin{equation}}
\newcommand{\eeq}{\end{equation}}
\newcommand{\beqa}{\begin{equation}\begin{aligned}}
\newcommand{\eeqa}{\end{aligned}\end{equation}}
\newcommand{\brmk}{\begin{rmk}}
\newcommand{\ermk}{\end{rmk}}
\newcommand{\partref}[1]{\hbox{(\csname @roman\endcsname{\ref{#1}})}}
\newcommand{\Rm}{{\mathrm{Rm}}}
\newcommand{\Riem}{{\mathrm{Rm}}}
\newcommand{\Rc}{{\mathrm{Ric}}}
\newcommand{\Rcci}{{\mathrm{Ric}}}
\newcommand{\Sc}{{\mathrm{R}}}
\newcommand{\Hess}{{\mathrm{Hess}}}
\newsavebox\CBox
\newcommand\hcancel[2][0.5pt]{%
  \ifmmode\sbox\CBox{$#2$}\else\sbox\CBox{#2}\fi%
  \makebox[0pt][l]{\usebox\CBox}%  
  \rule[0.5\ht\CBox-#1/2]{\wd\CBox}{#1}}
\title{
{
On the regularity of Ricci flows coming out of metric spaces
} %\thanks{7 May 2013.}
\\ 
}
\author{Alix Deruelle, 
Felix Schulze and 
Miles Simon}
\date{\today}
\begin{document}

\maketitle
\begin{abstract}\noindent 
We consider smooth, not necessarily complete,  $n$-dimensional Ricci flows, $(M,g(t))_{t\in (0,T)}$  with ${\mathrm{Ric}}(g(t)) \geq -1$  and $| {\mathrm{Rm}}  (g(t))| \leq c/t$ for all $t\in (0 ,T)$ 
 {\it coming out} of metric spaces  $(M,d_0)$ in the sense that
$(M,d(g(t)), x_0) \to (M,d_0, x_0)$ as $t\searrow 0$ in the pointed Gromov-Hausdorff sense. 
In the case that $B_{g(t)}(x_0,1)  \Subset M$ for all $t\in (0,T)$ and 
$(B_{d_0}(x_0,1),d_0)$ can be isometrically and compactly  embedded in a smooth $n$-dimensional Riemannian manifold
$(\Omega,d(\ti g_0))$, then  we show using Ricci-harmonic map heat flow, that there is a corresponding smooth  solution
$\tilde g(t)_{t\in (0,T)}$ to  the $\delta$-Ricci-DeTurck flow on an Euclidean ball ${\mathbb B}_{r}(p_0) \subset {\mathbb R}^n$, for some small $0<r<1,$ and $\ti g(t) \to \ti g_0$ smoothly as $t\to 0$.
We further show that this implies that the original solution $g$ can be extended locally to a smooth solution   defined up to  time zero,  in view of the method of Hamilton.
\end{abstract}

\parskip=0.5pt
\tableofcontents
\parskip=0pt

% General info
%\subjclass[2000]{53C44, 35Bxx}
% 53C44 Geometric evolution equations (mean curvature flow)
% 35Bxx Qualitative properties of solutions
% 35B35 Stability, boundedness

%\dedicatory{}

%\keywords{Ricci flow, expanding solitons, asymptotic cone.}

%\maketitle 
%\vskip  0.05 true inƒExample
%\vskip  0.1 true in
% \numberwithin{equation}{section}
%\numberwithin{defi}{section}

%\maketitle  

\section{Introduction}
\subsection{Overview}\label{overview}
In this paper, we investigate and answer in certain cases the following question.
\begin{pb} \label{main-pb}
Let $(M,g(t))_{t\in(0,T)}$ be a (possibly incomplete) Ricci flow which satisfies
\begin{equation}\label{eq:intro.1}
\quad |\Rm(\cdot,t)| \leq \frac{c_0}{t},
\end{equation} 
and for which $(M,d(g(t)))$ Gromov-Hausdorff converges to a metric space $(X,d_0)$ as $t\downto 0$. 

What further assumptions on the regularity of $(X,d_0)$ and $(M,g(t))_{t\in (0,T)}$ 
guarantee that $g(t)$ converges locally smoothly (or continuously) to a smooth (or continuous) metric as $t$ approaches zero?
\end{pb}

\begin{rmk} We recall that for a connected, open, not necessarily complete, Riemannian manifold $(M,g)$, there is a metric $d(g)$ induced by $g$ which makes $(M,d)$ into a metric space. Note that the distance between two points $p,q \in M$ is not necessarily realised by a geodesic, nevertheless for every $x \in M$ there exists $r>0$ such $B_{d(g)}(x,r)$ is geodesically convex, and the distance between any two points in $B_{d(g)}(x,r)$ is uniquely realised by a smooth geodesic.
\end{rmk}

If we assume in Problem \ref{main-pb} that $(M,g(t))_{t\in (0,T)}$ is complete for each $t \in (0,T)$ and in addition to \eqref{eq:intro.1} it holds that
\begin{equation}\label{basic-ass-curv-bd-intro}
\Rc(\cdot,t) \geq -1
\end{equation}
for all $t \in (0,T)$, then $X$ is homeomorphic to $M$ and the topology of $(M, d_0)$ agrees with that of $(M,g(t))$ for all $t \in (0,T)$. This is a consequence of the following estimate on the induced distances, see \cite[Lemma 3.1]{SiTo2}: Let $d_t = d(g(t))$, then $d_t \to d_0$ for a metric $d_0$ on $M$ and 
\begin{equation}
e^{t} d_0 \geq d_t \geq d_0 -\ga(n) \sqrt{c_0 t}  \ \mbox{ for all } \ t \in (0,T ).  \label{easydistest}
\end{equation}
This implies convergence of $d_t$ in the $C^0$-sense to $d_0$, which is stronger than Gromov-Hausdorff convergence. Since Gromov-Hausdorff limits are unique up to isometries, this implies:
if $(M,d(g(t_i)),p) \to (X,d_X,x)$ in the Gromov-Hausdorff sense for a sequence of times $t_i \downto 0$ then $(X,d_X,x)$ is isometric to $(M,d_0,p)$. Hence it is {\it not} possible that complete solutions satisfying \eqref{eq:intro.1} and \eqref{basic-ass-curv-bd-intro} come out of metric spaces which are {\it not} manifolds.

Note that if $\Rc(\cdot,t) \geq -C,$  and \eqref{eq:intro.1} holds for all $t\in (0,T)$  for some $C > 1$, then, since \eqref{eq:intro.1} is invariant under scaling, we can scale the solution such that it satisfies \eqref{basic-ass-curv-bd-intro}.

Estimate \eqref{easydistest} can be localised as follows.

\begin{lemma}(Simon-Topping, \cite[Lemma 3.1]{SiTo2}).\label{SiTopThm1}
Let $(M,g(t))_{t\in (0,T)}$, $T\leq 1$, be a smooth Ricci flow, satisfying $\Rc(\cdot,t) \geq -1 , \quad |\Rm(\cdot,t)| \leq c_0/t$, where 
 $M$ is connected but $(M,g(t))$ not  necessarily complete. Assume furthermore that $B_{g(t)}(x_0,1) \Subset M$ for all $t\in (0,T)$.    
 
Then $X:= (\cap_{s \in (0,T)} B_{g(s)}(x_0,\frac 1 2))$ is non-empty and there is a well defined limiting metric $d_t \to d_0$ as $t \downto 0$, where 
\begin{eqnarray}
&&e^td_ 0  \geq d_t  \geq d_0 -\ga(n)  \sqrt{c_0 t} \ \ \mbox{ for all } t\in [0,T)  \mbox{ on } X.  \label{distestinit} 
%&& \mbox{ on } B_{d_0}(x_0,  r ) \cr
\end{eqnarray}

Furthermore, there exists   $R= R(c_0,n)>0, S = S(c_0,n) >0$ such that
$B_{d_0}(x_0,r) \Subset \curlX \subseteq X$  
and $B_{g(t)}(x_0,r) \Subset \curlX \subseteq X $ for all $r \leq R(c_0,n)$ and $t \leq S$ where $\curlX$ is the connected component of $X$ which contains $x_0$, and the topology 
of $ B_{d_0}(x_0,r)$ induced by $d_0$ agrees with that of the set $  B_{d_0}(x_0,r)\subseteq M$ induced by the topology of $M$. 

 \end{lemma}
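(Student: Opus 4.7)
The plan is to localise the two basic distance-distortion inequalities for Ricci flow to the compactly contained ball, use them to show the Cauchy property for $d_t$ as $t\downto 0$, and then deduce the topological identification.

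First I would prove a local upper bound. Pick $0<s<t<T$ and $x,y$ for which a $g(s)$--minimising geodesic $\ga$ from $x$ to $y$ stays inside $B_{g(r)}(x_0,1)\Subset M$ for every $r\in[s,t]$. Since $\Rc(\cdot,r)\geqs -1$ we have $\partial_r g_{ij}=-2R_{ij}\leqs 2g_{ij}$, hence the length along $\ga$ satisfies $\partial_r L_r(\ga)\leqs L_r(\ga)$, giving $L_r(\ga)\leqs e^{r-s}L_s(\ga)$. Infimising over such curves yields $d_t(x,y)\leqs e^{t-s}d_s(x,y)$. The compact containment hypothesis is exactly what ensures the minimiser does not exit and that the argument is valid in the (possibly incomplete) setting.

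Next I would prove the local lower bound. Apply Hamilton's distance-distortion estimate at scale $r_0=\sqrt{t/c_0}$: on any geodesic ball of radius $r_0$ about a point of a minimising geodesic, $|\Rm|\leqs c_0/t = r_0^{-2}$, so at points where $d_r(x,y)$ is differentiable (or in the barrier sense) we obtain $\partial_r d_r(x,y)\geqs -\ga(n)\sqrt{c_0/r}$. Integration over $[s,t]$ gives
\begin{equation*}
d_t(x,y)\geqs d_s(x,y)-\ga(n)\bigl(\sqrt{c_0 t}-\sqrt{c_0 s}\bigr),
\end{equation*}
provided the required sub-balls around the geodesic all lie inside $M$; this is verified using $B_{g(r)}(x_0,1)\Subset M$ once $x,y$ are close enough to $x_0$.

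I would then combine the two estimates to show that $X=\bigcap_{s\in(0,T)}B_{g(s)}(x_0,\tfrac12)$ is non-empty and contains a neighbourhood of $x_0$: starting from any point $y$ with $d_{g(t_0)}(x_0,y)<\de$ sufficiently small, the upper bound controls $d_{g(r)}(x_0,y)$ for $r\leqs t_0$ and the lower bound controls it for $r\geqs t_0$, and an open-closed continuity argument in $r$ upgrades this to all $r\in(0,T)$. The connected component $\curlX$ of $X$ containing $x_0$ therefore contains $B_{g(t)}(x_0,r)$ and $B_{d_0}(x_0,r)$ for all small $r\leqs R(c_0,n)$ and $t\leqs S(c_0,n)$. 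Fixing such $x,y\in\curlX$, the inequality $|d_t(x,y)-d_s(x,y)|\leqs\ga(n)\sqrt{c_0 t}+(e^{t-s}-1)d_s(x,y)$ shows $(d_t(x,y))_{t\downto 0}$ is Cauchy, so a limit $d_0$ exists; passing to the limit $s\downto 0$ in the two bounds gives $e^t d_0\geqs d_t\geqs d_0-\ga(n)\sqrt{c_0 t}$. Finally, the two-sided bound sandwiches $d_0$-balls between $d_{g(t_1)}$-balls for any fixed $t_1>0$, so $d_0$ induces the same topology on $B_{d_0}(x_0,r)$ as the smooth manifold structure of $M$.

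The main obstacle is the interplay between incompleteness and the hypotheses needed to apply the distortion estimates: neither the length variation nor Hamilton's lower bound directly applies unless the relevant geodesics (and, for the lower bound, whole small balls around them) sit inside the region where curvature bounds hold. Disentangling this requires one to bootstrap, i.e.\ use the estimates themselves, together with $B_{g(t)}(x_0,1)\Subset M$, to guarantee by a continuity argument that points starting near $x_0$ cannot escape the ball $B_{g(r)}(x_0,\tfrac12)$ at any intermediate time $r\in(0,T)$, thereby closing the circle.
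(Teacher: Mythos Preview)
The paper does not prove this lemma; it is quoted from \cite[Lemma 3.1]{SiTo2} and only restated here (and in a refined form at the start of Section~\ref{bi_lip_section}). Your sketch follows exactly the standard argument one finds in the Simon--Topping reference: the upper distance bound from $\Rc\geq -1$ via length variation, the lower bound from the Hamilton--Perelman distance distortion lemma applied at scale $r_0=\sqrt{t/c_0}$, the Cauchy property of $t\mapsto d_t(x,y)$, and the sandwiching of $d_0$-balls between $d_t$-balls to identify the topologies. You have also correctly isolated the one genuine technical point, namely the bootstrap needed to keep the relevant geodesics and sub-balls inside the compactly contained region in the incomplete setting; this is handled in \cite{SiTo2} by precisely the open--closed continuity argument you describe.
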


Coming back to our initial question, assuming \eqref{eq:intro.1} and \eqref{basic-ass-curv-bd-intro} hold and for some $r>0$, $B_{g(t)}(x_0,r) \Subset M$  for all $t\in (0,T)$, 
  the above result implies that the metric $d_0$ exists (locally) and the convergence is in the sense of \eqref{distestinit}.\\[2ex]
{\bf Examples.} We give  examples of solutions satisfying the conditions \eqref{eq:intro.1} and \eqref{basic-ass-curv-bd-intro}.\\[-2ex]
\begin{enumerate}
 \item \label{Ex1} {\bf Expanding gradient Ricci solitons coming out of non-ne\-ga\-ti\-ve\-ly curved cones.}\\[0.5ex]
Consider a smooth Riemannian metric $\ga$ on $\Sp^{n-1}$ with eigenvalues of its curvature operator greater or equal to one and the cone $C(\ga) = (([0,\infty) \times \Sp^{n-1})/ \sim ,dr^2 \oplus r^2 \ga, O)$ where the equivalence relation $\sim$ identifies $O:=(0,x) \sim (0,y)$. Note that the curvature operator of $C(\ga)$ is non-negative away from the tip. In \cite{SchulzeSimon} it was shown that if $C(\ga)$ arises as the tangent cone at infinity of a non-compact manifold $M$ with non-negative and bounded curvature operator then there exists an expanding gradient soliton $(M,g)$ such that its evolution under Ricci flow $(M, (g(t))_{t\in (0,\infty)}$ has the property that $(M,d(g(t) ),p) \to C(\ga)$ in the pointed Gromov-Hausdorff sense as $t \searrow 0$. The  construction in \cite{SchulzeSimon} guarantees that this convergence is in $C^{\infty}_{loc}$  away from the tip of the cone. \\
In \cite{DeruelleExpanders} it was later shown that there always exists an expanding gradient soliton coming out of any such non-negatively curved cone $C(\ga)$. The construction in \cite{DeruelleExpanders}  also guarantees that the convergence is in $C^{\infty}_{loc}$ away from the tip: the existence result is based on the Nash-Moser fixed point theorem. Problem \ref{main-pb} was partly motivated by the cost of using such a "black box". Indeed, the Nash-Moser fixed point theorem is not so sensitive to the nature of the non-linearities of the Ricci flow equation as long as the corresponding linearized operator satisfies the appropriate Fredholm properties. In particular, the use of the Nash-Moser fixed point theorem does not shed new light on the smoothing effect of the Ricci flow. Finally, we emphasize  that uniqueness of such solutions is unknown among the class of asymptotically conical gradient Ricci solitons with positive curvature operator.\\[-1ex]
%Indeed, the proof given in \cite{DeruelleExpanders} is based on a continuity method and it only works if any two such expanding gradient Ricci solitons with the same tangent cone at infinity have a vanishing relative Bianchi gauge.\\

\item \label{example-2} {\bf Ricci flows coming out of non-collapsed Ricci limit spaces.}\\[0.5ex]
Let $(M_i,g_i(0),x_i)_{i\in \N}$ be a sequence of smooth $n$-dimensional Riemannian manifolds with bounded curvature,  such that 
 $\curlR(g_i(0)) + c\cdot \Id(g_i(0)) \in \Cone_{\curlK}$ and $\vol(B_{g_i(0)}(x)) \geq v_0$ for all $x \in M_i$ 
for all $i \in \N$, for some $c,v_0>0$  where $\curlR $ is the curvature operator, $\Id$ is the identity operator of the sphere and $\Cone_{\curlK}$ is the cone of  i) non-negative curvature operators, respectively ii) $2$-non-negative curvature operators,
 respectively iii) weakly $PIC_1$  curvature operators,  respectively iv) weakly $PIC_2$ curvature operators.
Then [\cite{MilesCrelle3D} for (i), (ii) in case $n=3$,  \cite{BamCab-RivWil} for (i) -(iv) for general $n\in \N$] 
there are  solutions  $(M_i,g_i(t),x_i)_{t\in [0,T(n,v_0,c)]}$ such that
$\curlR(g_i(t))) + C\cdot\Id \in \Cone_{\curlK}$ (for some new $C>0$). Note that this implies $\Rc(g_i(t)) \geq -c(n)C.$ After scaling each solution we obtain a sequence of solutions satisfying  \eqref{eq:intro.1} and \eqref{basic-ass-curv-bd-intro}. 
Taking a  sub-sequencial limit, we obtain a pointed Cheeger-Hamilton limit solution $(M^n,g(t),x_{\infty})_{t \in (0,1)}$ which satisfies \eqref{eq:intro.1} and \eqref{basic-ass-curv-bd-intro}.\\[1ex]
More generally, if we take a sequence of smooth complete solutions $(M_i,g_i(t),x_i)_{t\in [0,1)}$ satisfying \eqref{eq:intro.1} and \eqref{basic-ass-curv-bd-intro}, and 
$\vol(B_{g_i(t)}(x_i),1) \geq v_0$ for all $i\in \N$ for all $t\in [0,1)$, we obtain a pointed solution $(M^n,g(t),x_{\infty})_{t \in (0,1)}$ as a sub-sequential Cheeger-Hamilton limit, which satisfies  \eqref{eq:intro.1} and \eqref{basic-ass-curv-bd-intro}.\\
\end{enumerate}

\noindent {\bf Local setting.} Problem \ref{main-pb} can be considered  locally in the context of the above examples as follows.\\[-2ex]
\begin{enumerate}
\item Assume that  $(M,g(t) )_{t\in (0,\infty)}  $ is a smooth self-similarly expanding solution with non-negative Ricci curvature coming out of a cone $(M^n,d_X)  = (\R^+ \times S^{n-1} ,dr^2 \oplus r^2 \ga) $, where $\ga$ is a smooth (continuous)  Riemannian metric. Does the solution $(M,g(t))_{t\in (0,1)}$ come out smoothly (continuously)? 
That is, is the solution
$$ (M \backslash \{p\}, g(t))_{t\in [0,1]}$$ smooth (continuous), where $p$ is the tip of the cone, and $g_0$ is the cone metric on $M \backslash \{p\}$ at time zero?\\
If we replace the assumption that '$\ga$ is smooth (continuous) on $S^{n-1}$' to '$\ga$ is smooth  (continuous)  on an open set $V \subseteq S^n$', we ask the question: is $$ ((\R^+ \times V ), g(t)|_{\R^+ \times V} )_{t\in [0,1]}$$ smooth (continuous)?

\item In the setting of Example \eqref{example-2} let $(M,d_0,x_{\infty})$ be the limit as $t \downto 0$ of 
 $(M,d(g(t)), x_{\infty}) $. Note that $(M,d_0, x_{\infty})$ is isometric to the 
Gromov-Hausdorff limit of $(M_i,d(g_i(0)),x_i)$ as $i \to \infty$, in view of \eqref{distestinit}.
Let $V \subseteq M$ be an open set such that $(V,d_0)$ is isometric to a smooth (continuous) Riemannian manifold. Can $(V,g(t))_{t\in (0,1)}$ be extended smoothly (continuously) to $t=0$, that is, does there exist a smooth (continuous) $g_0$ on $V$ such that $(V,g(t))_{t\in [0,1)}$ is smooth (continuous)?\\[-1ex]
\end{enumerate}

\noindent We will see in Theorem \ref{smoothness_of_solutions_intro}, that the answer to each of these questions in the smooth setting  is {\bf yes}, if we measure the smoothness of the initial metric space appropriately. 
The answer to each of these questions is also {\bf yes} in the continuous setting, see Theorem \ref{continuity_of_solutions_intro}, if we measure the continuity of the initial metric  appropriately {\it and} the convergence in the continuous setting is measured  {\it up to diffeomorphisms}.

The smoothness (respectively continuity) of a metric space in this paper will be measured as follows.
Let $0<\ep_0(n)<100^{-1}$ be a small fixed positive constant depending only on $n$. We denote with $\B_{r}(x) \subset \R^n$ the Euclidean ball with radius $r$, centred at $x$. 

\begin{defn}\label{smoothness_continuity_metric_spaces}
Let $(X,d_0)$ be a  metric space and let $V$ be a set in $X$. We say $(V,d_0)$  is {\it smoothly {\rm(respectively} continuously}) $n$-Riemannian 
 if  for all $x_0 \in V$ there exist $0<\ti r,r$ with $ \ti r < \frac 1 5 r$ and points $a_1, \ldots, a_n \in B_{d_0}(x_0,r)$ such that the map
 $$F_0(x):= (d_0(x,a_1), \ldots, d_0(x,a_n)),\quad x\in B_{d_0}(x_0,r), $$ is a $(1+\ep_0)$ bi-Lipschitz homeomorphism on $B_{d_0}(x_0,5 \ti r)$ and the push-forward of $d_0$ via $F_0$ given by $\ti d_0(\ti x, \ti y):= d_0((F_0)^{-1}(\ti x), (F_0)^{-1}(\ti y))$ on $\B_{4 \ti r}(F_0(x_0))$ $\Subset F_0(B_{d_0}(x_0,5 \ti r))$  is induced by a smooth (respectively {\it continuous}) Riemannian metric: there exists a {\it smooth} (respectively {\it continuous}) Riemannian metric $\ti g_0$ defined on $\B_{4 \ti r}(F_0(x_0)),$ such that $\ti d_0$ satisfies $\ti d_0 = d (\ti g_0 )$, when restricted to $\B_{\ti r}(F_0(x_0))$, where $d(\ti g_0)$ is the distance on  $( \B_{4 \ti r}(F_0(x_0)), \ti  g_0)$.
\end{defn}

Since this definition might be slightly counter-intuitive at first reading, we give an alternative definition as well.

\begin{defn}\label{smoothness_continuity_metric_spaces_second}
Let $(X,d_0)$ be a  metric space and let $V$ be an open set in $X$. We say $(V,d_0)$  is {\it smoothly} $n$-Riemannian 
 if  for all $x_0 \in V$ there exist   a smooth $n$-dimensional  connected Riemannian manifold
 $(U(x_0),\ti g_0)$ and a neighbourhood $V(x_0) \subseteq V $ of $x_0$, and an isometry 
  $F_0:(V(x_0),d_0) \to (F_0(V(x_0)),\ti d_0) \subseteq (U(x_0),\ti d_0),$ where $(U(x_0),\ti d_0)  = 
  (U(x_0), d(\ti g_0)),$
  and $d(\ti g_0)$ is the distance on  $(U(x_0), \ti  g_0)$.
\end{defn}
\noindent {\bf Equivalence of definitions.} The first definition clearly implies the second one. That the second definition implies the first, may be seen as follows:
Since $(U(x_0),\ti g_0)$ is smooth, we may find $ \ti a_1, \ldots \ti a_n \in F_0(V(x_0))$ such that
$\ti F_0(\ti x) = ( \ti d_0(\ti a_1, \ti x), \ldots, \ti d_0(\ti a_n, \ti x))$ is $(1+\ep_0)$ bi-Lipschitz and smooth in a neighbourhood of $F(x_0)$. 
Defining $ a_1:= F_0^{-1}(\ti a_1), \ldots, a_n:= F_0^{-1}(a_n)),$  and $\hat g_0:= (\ti F_0)_*(\ti g_0) ,$
we see that $\hat F_0 = \ti F_0 \circ F_0 $ satisfies: $(\hat F_0)_*(d_0) := \hat d_0 = d(\hat g_0)$ and
$$\hat F_0(x) = (  \ti d_0(\ti a_1,  F_0(x)), \ldots, \ti d_0(\ti a_n, F_0( x) ) )
= (   d_0( a_1, x), \ldots, d_0(a_n,x) )$$ is $(1+\ep_0)$ bi-Lipschitz, as required.

\subsection{Main results}
The first theorem gives a positive answer to the questions posed in the previous subsection in the  smooth setting. 

\begin{thm}\label{smoothness_of_solutions_intro}
Let $(M,g(t))_{t\in (0,T]}$ be a smooth solution to Ricci flow satisfying \eqref{eq:intro.1} and \eqref{basic-ass-curv-bd-intro}, 
and assume $B_{g(t)}(x_0,1) \Subset M$ for all $0<t\leq T$ and 
let $(X,d_0)$ be the $C^0$ limit,as $t\downto 0$ of $(X,d(g(t))$ established in Lemma  \ref{SiTopThm1}, where $X:= (\cap_{s \in (0,T)} B_{g(s)}(x_0,\frac 1 2)).$  Assume further that $(B_{d_0}(x_0,r),d_0)$ is smoothly $n$-Riemannian  in the sense of Definition
  \ref{smoothness_continuity_metric_spaces}, where $r \leq R(c_0,n)$ and $R(c_0,n)$ is as in Lemma \ref{SiTopThm1}. Then there exists a smooth Riemannian metric $g_0$ on $B_{d_0}(x_0,s)$ for some $s\in(0,r)$ such that we can extend  the smooth solution $(B_{d_0}(x_0,s),g(t))_{t\in (0,T)}$ to 
a smooth solution $(B_{d_0}(x_0,s),g(t))_{t\in [0,T)}$  by defining $g(0) = g_0$.
\end{thm}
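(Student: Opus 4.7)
\medskip

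\noindent \textbf{Plan of proof.} The strategy, announced in the abstract, is to carry out four steps: (i) identify a neighbourhood of $x_0$ in $(X,d_0)$ with a Euclidean ball carrying a smooth background metric $\ti g_0$ via distance coordinates; (ii) gauge the Ricci flow $g(t)$ into a $\de$-Ricci-DeTurck flow $\ti g(t)$ on this ball by means of Ricci-harmonic map heat flow; (iii) prove that $\ti g(t)\to \ti g_0$ smoothly as $t\downto 0$ using the parabolic regularity of the $\de$-Ricci-DeTurck equation; (iv) invert the DeTurck gauge by Hamilton's ODE to obtain a smooth extension of $g(t)$ to $t=0$ on a small ball. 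For step (i), Definition \ref{smoothness_continuity_metric_spaces} directly provides points $a_1,\dots,a_n\in B_{d_0}(x_0,r)$ and radii $\ti r<r/5$ such that $F_0(x)=(d_0(x,a_1),\dots,d_0(x,a_n))$ is a $(1+\ep_0)$-bi-Lipschitz homeomorphism on $B_{d_0}(x_0,5\ti r)$ and the push-forward $(F_0)_\ast d_0$ on $\B_{4\ti r}(F_0(x_0))$ is the distance of a smooth Riemannian metric $\ti g_0$. We henceforth work in these coordinates, treating the smooth background $\ti g_0$ and the flat metric $\de$ side by side on a Euclidean ball.

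For step (ii), for each sufficiently small $s>0$ the distance-coordinate map $F_s(x)=(d(g(s))(x,a_1),\dots,d(g(s))(x,a_n))$ is bi-Lipschitz, and $F_s\to F_0$ in $C^0$ by \eqref{easydistest}. We solve the Ricci-harmonic map heat flow coupled to $g(t)$, namely $\partial_t\Phi(t)=\tau_{g(t),\de}(\Phi(t))$ with $\Phi(s)=F_s$ and target $(\B_{4\ti r}(F_0(x_0)),\de)$. Short-time parabolic estimates depending only on $n$, $c_0$ and the bi-Lipschitz constant of $F_s$ produce diffeomorphisms $\Phi(t)$ onto a fixed Euclidean ball $\B_r(p_0)$ on a uniform time interval $[s,s+T_0]$, and a standard computation shows that $\ti g^s(t):=(\Phi(t))_\ast g(t)$ solves the $\de$-Ricci-DeTurck flow there.

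For step (iii), we pass to a subsequential limit $s\downto 0$ to obtain a $\de$-Ricci-DeTurck solution $\ti g(t)$ on $\B_r(p_0)\times(0,T_0]$ together with a limiting diffeomorphism $\Phi(t)$ satisfying $\Phi(t)\to F_0$ in $C^0$ as $t\downto 0$. The uniform bi-Lipschitz control on $\Phi(t)$ and the $C^0$-closeness of $d(g(t))$ to $d_0$ from Lemma \ref{SiTopThm1} force $\ti g(t)$ to be $L^\infty$-close to $\ti g_0$ at each positive time, with the closeness vanishing as $t\downto 0$. Since the $\de$-Ricci-DeTurck flow is strictly parabolic, interior Schauder estimates and a standard bootstrap promote this $L^\infty$-closeness to $C^k_{\mathrm{loc}}$-closeness for every $k\in\N$, so $\ti g(t)$ extends smoothly to $t=0$ with $\ti g(0)=\ti g_0$. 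Finally, for step (iv) we apply Hamilton's DeTurck trick in reverse: let $\psi_t:\B_{s}(p_0)\to \B_r(p_0)$ solve $\partial_t\psi_t=-W(\ti g(t),\de)\circ\psi_t$ with $\psi_0=\id$, for a suitable $s\in(0,r)$, where $W$ is the DeTurck vector field. Then $h(t):=\psi_t^\ast\ti g(t)$ is a smooth Ricci flow on $\B_s(p_0)\times[0,T_0']$ with $h(0)=\ti g_0|_{\B_s(p_0)}$, and by construction $h(t)$ coincides with $g(t)$ pulled back by $\Phi(t)^{-1}\circ\psi_t$; transporting back to $M$ via $\Phi(t_0)$ for some small $t_0>0$ (which gives a smooth chart on $M$) yields the required smooth extension $g(0):=g_0$ on $B_{d_0}(x_0,s)$.

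\emph{Main obstacle.} The crux is step (iii): one must upgrade the mere $C^0$-closeness of the distances $d(g(t))$ to $d_0$ into smooth convergence of the Riemannian tensors $\ti g(t)$ to $\ti g_0$ on a fixed Euclidean ball. This requires uniform parabolic regularity for $\de$-Ricci-DeTurck flow with initial data only assumed to be bi-Lipschitz close to $\ti g_0$, together with a careful verification that the Ricci-harmonic map heat flow gauge $\Phi(t)$ remains compatible with the distance coordinate map $F_0$ as $t\downto 0$, so that the limit of $\ti g(t)$ is precisely $\ti g_0$ and not some other metric in its diffeomorphism class.
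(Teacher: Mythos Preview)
Your steps (i) and (ii) track the paper's construction closely (Theorem \ref{RicciDeTurck} and Theorem \ref{continuous_thm}): one does produce a $\de$-Ricci-DeTurck flow $\ti g(t)$ on a Euclidean ball with $\ti g(t)\to\ti g_0$ in $C^0$ as $t\downto 0$. The gap is in step (iii).

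You assert that ``interior Schauder estimates and a standard bootstrap promote this $L^\infty$-closeness to $C^k_{\mathrm{loc}}$-closeness for every $k$''. This is not justified. The $\de$-Ricci-DeTurck system is quasilinear: the principal coefficients $\ti g^{ab}(x,t)$ depend on the unknown, so Schauder theory at the initial slice requires regularity of $\ti g$ at $t=0$ that you do not yet have. Knowing only $\ti g(t)\to\ti g_0$ in $C^0$ with $\ti g_0$ smooth does not, by itself, give bounds on $D\ti g$, $D^2\ti g$ uniformly up to $t=0$; the interior parabolic estimates you cite blow up like $t^{-j/2}$. The analogue for the linear heat equation (discussed in \S1.4 of the paper) already relies on a comparison with a smooth Dirichlet solution plus the maximum principle, not on a direct bootstrap.

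The paper closes exactly this gap by a two-ingredient argument you do not mention: it constructs, via Theorem \ref{Dirichlet3}, a separate solution $\ell$ to the Dirichlet problem for $\de$-Ricci-DeTurck flow with parabolic boundary data $\ti g|_P$, built so that $\ell$ is smooth on $\B_{\ti r}(0)\times[0,T]$ (this uses the smoothness of $\ti g_0$ at $t=0$ and approximation by solutions satisfying higher compatibility conditions). Then the $L^2$-Lemma (Lemma \ref{L2Lemma}, Corollary \ref{Cor2L2Lemma}) --- a weighted energy estimate showing that two $\de$-close $\de$-Ricci-DeTurck solutions agreeing on the parabolic boundary must coincide --- forces $\ell=\ti g$. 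This is the substantive analytic input replacing your unproven bootstrap.

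Your step (iv) is also more elaborate than needed. The paper does not solve the DeTurck ODE backward; it simply observes that $\ti g(t)=\lim_i (Z_i(t))_* g(t)$, so the uniform bounds on $|\nabla^j\Rm(\ti g(t))|$ on $\B_{3\ti r/4}(0)\times[0,1]$ transfer directly to $g(t)$ on $B_{d_0}(x_0,\ti r/2)$, since curvature norms are diffeomorphism-invariant. Hamilton's compactness/extension argument (\cite[\S6]{HamFor}) then gives the smooth extension of $g$ to $t=0$. Your ODE approach could be made to work but introduces unnecessary domain-shrinking issues.
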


As noted  by Topping in \cite{Top-Pri-Com}, this result was  known for  Ricci flow of closed 2-manifolds by results of Richard in \cite{Richard-T-Alex}. 

The second theorem is concerned with the corresponding question in the continuous setting.

\begin{thm}\label{continuity_of_solutions_intro}
Let $(M,g(t))_{t\in (0,T]}$ be a smooth solution to Ricci flow satisfying \eqref{eq:intro.1} and \eqref{basic-ass-curv-bd-intro}, 
and assume $B_{g(t)}(x_0,1) \Subset M$ for all $0<t\leq T$ and 
let $(X,d_0)$ be the $C^0$ limit,as $t\downto 0$ of $(X,d(g(t))$ established in Lemma  \ref{SiTopThm1}, where $X:= (\cap_{s \in (0,T)} B_{g(s)}(x_0,\frac 1 2)).$  Assume further that $(B_{d_0}(x_0,r),d_0)$ is continuously $n$-Riemannian  in the sense of Definition
  \ref{smoothness_continuity_metric_spaces}, where $r \leq R(c_0,n)$ and $R(c_0,n)$ is as in Lemma \ref{SiTopThm1}.

Then for any strictly monotone sequence  $t_i \searrow 0$, there exists a radius $v>0$ and a continuous Riemannian metric $\ti g_0$, defined on $\B_{v}(p),$ $p\in \R^n$, and a   family of smooth diffeomorphisms $Z_{i}:B_{d_0}(x_0,2v) \to \R^n$ such that $(Z_{i})_{*}(g(t_i))$ converges in the $C^0$ sense
to $\ti g_0$ as $t_i \downto 0$ on $\B_{v }(p)$.

\end{thm}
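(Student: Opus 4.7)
The plan is to adapt the proof of Theorem \ref{smoothness_of_solutions_intro}: use the Ricci--harmonic map heat flow to identify $(M,g(t))$, after a smooth change of gauge, with a smooth solution of the $\delta$-Ricci--DeTurck flow on a Euclidean ball converging to $\tilde g_0$. In the smooth case of Theorem \ref{smoothness_of_solutions_intro} this convergence is smooth; in the present continuous case only $C^0$ convergence should survive, which is precisely the statement we aim to prove.

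\textbf{Step 1 (Distance coordinate setup).} Apply Definition \ref{smoothness_continuity_metric_spaces} at $x_0$ to obtain points $a_1,\ldots,a_n\in B_{d_0}(x_0,r)$, a scale $0<\tilde r<r/5$, and a continuous Riemannian metric $\tilde g_0$ on $\B_{4\tilde r}(p)$, $p:=F_0(x_0)\in\R^n$, such that
$$F_0(x) := (d_0(x,a_1),\ldots,d_0(x,a_n))$$
is $(1+\ep_0)$ bi-Lipschitz on $B_{d_0}(x_0,5\tilde r)$ and $(F_0)_*d_0=d(\tilde g_0)$ on $\B_{\tilde r}(p)$. Set $F_t(x) := (d_t(x,a_1),\ldots,d_t(x,a_n))$ for $t\in(0,T)$. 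Then \eqref{distestinit} gives $F_t\to F_0$ uniformly on $B_{d_0}(x_0,5\tilde r)$ as $t\downto 0$, and $F_t$ is $(1+2\ep_0)$ bi-Lipschitz in $g(t)$ for $0<t\leq t^*$ with $t^*$ small.

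\textbf{Step 2 (Gauge fixing via Ricci--harmonic map heat flow).} For each $i$ choose $\sigma_i=c\,t_i$ with $c\in(0,1)$ small and run the harmonic map heat flow $\phi_t:(M,g(t))\to(\R^n,\delta)$ on $[\sigma_i,t_i]$ with initial data $F_{\sigma_i}$, as in the proof of Theorem \ref{smoothness_of_solutions_intro}. Exploiting the curvature bound $|\Rm(g(t))|\leq c_0/t$ and the bi-Lipschitz closeness of $F_t$ to $F_0$, $\phi_{t_i}$ is a smooth diffeomorphism from $B_{g(t_i)}(x_0, 3\tilde r)$ onto an open set containing $\B_v(p)$ for some $v>0$ independent of $i$, and $\tilde g(t):=(\phi_t^{-1})^*g(t)$ solves the $\delta$-Ricci--DeTurck equation on $\B_v(p)$. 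Set $Z_i:=\phi_{t_i}:B_{d_0}(x_0,2v)\to\R^n$.

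\textbf{Step 3 ($C^0$ convergence and main obstacle).} The smooth metrics $\tilde g_i := (Z_i)_*g(t_i) = \tilde g(t_i)$ are uniformly $(1+3\ep_0)^2$-bi-Lipschitz equivalent to $\delta$ and solve the quasilinear parabolic $\delta$-Ricci--DeTurck system with coefficients uniformly close to constant; hence Koch--Lamm type interior Schauder estimates provide uniform H\"older bounds on $\tilde g_i$ on $\B_v(p)$. Arzel\`a--Ascoli then yields a $C^0$-subsequential limit $\bar g$, continuous Riemannian on $\B_v(p)$. The induced distances satisfy $d(\tilde g_i)\to d(\bar g)$ uniformly (by $C^0$ convergence of $\tilde g_i$), and $d(\tilde g_i)=(Z_i)_*d_{t_i}\to(F_0)_*d_0=d(\tilde g_0)$ uniformly (because $Z_i\to F_0$ and $d_{t_i}\to d_0$ uniformly), so $d(\bar g)=d(\tilde g_0)$. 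As a continuous Riemannian metric on an open set is uniquely determined by its induced length distance, $\bar g=\tilde g_0$, and the whole sequence converges in $C^0$. The main obstacle lies in Step 2: controlling the harmonic map heat flow $\phi_t$ started from only Lipschitz initial data in a time-dependent ambient metric $g(t)$, uniformly in $i$ on an $i$-independent ball, while assembling the $(\phi_t^{-1})^*g(t)$ into a \emph{single} Ricci--DeTurck solution on $\B_v(p)$ rather than into $i$-dependent flows.
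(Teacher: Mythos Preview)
Your Steps 1--2 are essentially the paper's setup (Lemma \ref{mollified_dist} and Theorems \ref{thm-grad-est}, \ref{flowthm}, \ref{RicciDeTurck}); the ``main obstacle'' you locate in Step 2 is not one, since Theorem \ref{flowthm} already gives uniform bi-Lipschitz control of $Z_i$ on an $i$-independent ball. The genuine gap is in Step 3.

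Your compactness argument does not close. The interior parabolic estimates you invoke (Koch--Lamm or \cite[Lemma 4.2]{MilesC0paper}) for the $\delta$-Ricci--DeTurck flow $\tilde g^{(i)}(s)=(\phi_s)_*g(s)$ are \emph{interior in time}: they yield $|D\tilde g^{(i)}(s)|\leq c\,(s-\sigma_i)^{-1/2}$, so at $s=t_i$ with $\sigma_i=c\,t_i$ the gradient bound is $\sim t_i^{-1/2}\to\infty$. Equivalently, $|\nabla^{g(t_i),2}Z_i|\lesssim t_i^{-1/2}$ from \eqref{flow_second_gradient}, so the pushforward metrics $\tilde g_i=(Z_i)_*g(t_i)$ have no uniform modulus of continuity, and Arzel\`a--Ascoli is unavailable. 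Without a $C^0$-subsequential limit $\bar g$, your identification $d(\bar g)=d(\tilde g_0)$ has nothing to act on. (A minor point: $F_{\sigma_i}$ is only Lipschitz, so you must mollify before running the Dirichlet heat flow, as in Lemma \ref{mollified_dist}.)

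The paper proceeds differently and avoids compactness entirely. Two ingredients you are missing: (i) Lemma \ref{curvaturedecaylemma} shows, via a blow-up argument using that every tangent cone of $(X,d_0)$ is $\R^n$, that the curvature bound self-improves to $|\Rm(\cdot,t)|\leq \ep(t)/t$ with $\ep(t)\to 0$; (ii) Lemma \ref{continuouslemma} uses the continuity of $\tilde g_0$ to find, near each point and for every $\ep>0$, a linear map $A$ with $A\circ F_0$ a $(1+\ep)$-bi-Lipschitz map on a ball of radius $r(\ep)$. One then applies Theorem \ref{flowthm} to $A\circ Z_i$ with $\alpha_0$ arbitrarily small (this is now legitimate because both the curvature constant and the bi-Lipschitz constant of the initial data are as small as one likes), yielding directly $|(A\circ Z_i)_*g(t_i)-\delta|_{C^0}\leq\sigma(\ep)$ on $\B_{r(\ep)/5}$, hence $|(Z_i)_*g(t_i)-\tilde g_0|_{C^0}\leq\sigma(\ep)$. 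Letting $\ep\to 0$ gives the $C^0$ convergence without any equicontinuity. This is the content of Theorem \ref{continuous_thm}, from which Theorem \ref{continuity_of_solutions_intro} follows by scaling.
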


\subsection{Metric space convergence and the conditions \eqref{eq:intro.1} and \eqref{basic-ass-curv-bd-intro}}

Assume we have a smooth complete solution to Ricci flow 
$(M^n,g(t))_{t \in (0,1)},$ satisfying \eqref{eq:intro.1} but not necessarily \eqref{basic-ass-curv-bd-intro}. Then there is no guarantee that a limit metric $d_0= \lim_{t \to 0} d(g(t))$ exists.
Similarly, if we have  a sequence of smooth complete solutions $(M_i^n,g_i(t),x_i)_{t \in [0,1)},$  satisfying  $|\Rm (g_i(t)))|\leq c_0/t$ and $\vol(B_{g_i(t)}(x)) \geq v_0 >0$ for all $t\in (0,1)$ for all $x\in M_i$ for some $c_0,v_0 >0,$  for all $i\in \N$, we obtain 
a limiting solution in the smooth Cheeger-Hamilton sense, $(M^n,g(t),p)_{t \in (0,1)},$ which satisfies   
$|\Rm (\cdot,t)|\leq c_0/t$ for all $t\in (0,1)$, but again, there is {\it no} guarantee that a limit metric $d_0= \lim_{t \to 0} d(g(t))$ exists.
Furthermore, \textit{if} a pointed Gromov-Hausdorff limit $(M,d_0,p)$, as $t\to 0$, of $(M,d(g(t)),p)$ exists and \textit{if} a Gromov-Hausdorff limit
$(X,d_X,y) $ in $i\in\N$ of $(M_i^n,d(g_i(0)),x_i)_{i\in \N}$ exists, then  there is no guarantee that  $(X,d_X,y) $ is isometric to $(M,d_0,p),$ or that $(M,d_0)$ has the same topology as $d(g(t))$ for $t>0$.

 An example which considers the metric behaviour under limits of solutions with no uniform bound from below on the Ricci curvature but with $|\Rm(\cdot,t)| \leq c_0/t$ is given in a  recent work of Peter Topping \cite{Top-Pri-Com}. 
There, he  
constructs   examples of smooth solutions $(\T^2,g_i(t))_{t\in [0,1)}$ to Ricci flow, satisfying  
$(\T^2,d(g_i(0)))  \to (T^2,d(\de))$,  as $i\to \infty,$ where $\de$ is the standard flat metric on $\T^2,$ and  $|\Rm(g_i(t))|\leq c/t$  for all $t\in (0,1), i\in \N$ for some $c>0$, 
but so that the limiting solution $(\T^2,g(t))_{t\in (0,1]} $ satisfies $(\T^2,d(g(t)) )_{t\in (0,1)}  = (\T^2,\hat d),$   
where $(\T^2,\hat d)$ is isometric to 
 $(T^2,d(2\de)).$
The initial smooth data $g_i(0)$ do not satisfy $\Rc(g_i(0)) \geq -k$ for some fixed $k>0$ for all $i\in \N,$ and so the arguments used to show  that the Gromov-Hausdorff limit of the initial data  is the same as the limit as $t \to 0$ of the limiting solution, are not valid.

\subsection{Related results}
We recall the basic setup for the initial trace problem for the scalar heat equation.
Consider a smooth solution $u: \R^n \times (0,1) \to \R$, with  $\partt u = \lap u$,
and assume that $u(\cdot,t) \to u_0(\cdot)$ in $C^0_\text{loc}(\B_1(0))$ as $t\searrow 0$, where $u_0 \in C^\infty(\B_1(0))$. Then the solution can be locally extended to a smooth local solution $v: \B_{1/2}(0)  \times [0,1) \to \R$, by defining $v(\cdot,0) = u_0(\cdot)$ on $\B_{1/2}(0)$, as we now explain:
since $u \in C^0(\overline{\B_{3/4}(0)}  \times [0,1])$, by standard theory, there exists  a solution of the heat equation $z \in C^0(\overline{\B_{3/4}(0)}  \times [0,1]) \cap C^\infty(\B_{3/4}(0) \times [0,1])$, such that
  $z = u$ on the parabolic boundary $\boundary  \B_{3/4}(0) \times [0,1]\,  \cup \, 
 \overline{\B_{3/4}(0)} \times \{0 \} $.  The maximum principle then implies that $z \equiv u$ and hence $u$ is smooth on $\B_{3/4}(0)  \times [0,1]$, as required.
Here the linear theory simplifies the situation. We have also assumed that  $u(\cdot,t) \to u_0$ locally uniformly.
In the Ricci flow setting, assuming \eqref{eq:intro.1} and \eqref{basic-ass-curv-bd-intro}, we saw above  that the initial values must be taken on uniformly, albeit for the distance, not necessarily the Riemannian metric.

A non-linear  setting closer to the one considered in this paper is as follows.
In \cite{Appleton}, Appleton considers (among other things)  the $\de$-Ricci-DeTurck flow of metrics $g_0$ on $\R^n$ which are close to the standard metric $\de$, in the sense that $|g_0 -\de|_{\de} \leq \ep(n)$. 
%In the paper of \cite{MilesC0paper}, it was shown that there exists a smooth 
In the work of Koch and Lamm, see \cite[Theorem 4.3]{KochLamm}, it was shown that under this closeness condition there always exists a {\it weak solution}  $(\R^n,g(t))_{t\in (0,\infty)}$.   Weak solutions defined on $[0,T)$ ($T = \infty$ is allowed) 
are smooth for all $t>0$ and $h(x,t):= g(x,t) -\de(x)$ 
has    bounded $X_T$ norm, where
\begin{equation*}
\begin{split}
\| h& \|_{X_T} :=  \sup_{0<t<T} \|h(t)\|_{\de} \cr
& \ \ \ \ \ + \sup_{ \! \! x \in \R^n} \sup_{0< R^2 <T}  \Big(R^{-\frac{n}{2}} \|\grad h\|_{L^2(\B_R(x) \times (0,R^2))}  + R^{\frac{2}{n+4} } \|\grad h\|_{L^{n+4}( \B_R(x) \times (\frac{R^2}{2},R^2)) } \Big)
\end{split}
\end{equation*}
If the initial values $g_0$  are continuous then the  initial values are attained in the $C^0$-sense, that is $|g(t)-g_0|_{\de} \to 0$ as $t \to 0$. 
Appleton showed, see \cite[Theorem 4.5]{Appleton}, that  any weak solution $h(t):= g(t)-\de$ which has $g_0 \in C^{2,\al}_{loc}(\R^n)$ and $|h_0|_{\de} \leq \ep(n),$  
must have $h(t) \in H^{2 +\al, 1 + \frac{\al}{2} }_{loc}(\R^n \times [0,\infty))$. In particular the zeroth, first and second spatial derivatives  of $h(t)$ locally approach  
those of $h_0$ as $t\downto 0$.  
That is, for classical initial data $h_0 \in  C^{2,\al}_{loc}(\R^n),$ any weak solution $h(t)$ restricted to $\Omega$ approaches $h(0)$  in the $C^{2,\al}(\Omega)$  norm on $\Omega$ for any precompact, open set $\Omega$.

Rapha\"{e}l Hochard established in \cite{HochardThesis} some results similar to some of those appearing in Sections \ref{harm} and \ref{bi_lip_section} of the current paper.
We received a copy of Hochard's thesis, after a pre-print version, including  the  relevant sections,  of this paper was finished but not yet published.
We have included references to the results of Hochard at the appropriate points throughout this paper. His approach differs slightly, as we explain at the relevant points.

\subsection{Outline of paper}
We outline the idea of proof of the main theorems, Theorem  \ref{smoothness_of_solutions_intro} and \ref{continuity_of_solutions_intro}.
The idea is somewhat similar to the one we used above to show smoothness of solutions to the heat equation coming out of smooth initial data, which are smooth for positive times.

It is well known that since Ricci flow is invariant under diffeomorphisms, it only represents a degenerate parabolic system. To able to prove initial regularity, it is thus necessary to put Ricci flow into a good gauge, transforming Ricci flow into a strictly parabolic system. The strategy we follow here is to construct a local family of diffeomorphims solving Ricci-harmonic map heat flow into $\R^n$ and push forward the Ricci flow solution to obtain a solution to $\de$-Ricci-DeTurck flow. 

More precisely, despite the low initial regularity, we show that there is a solution to the Ricci-harmonic map heat flow, $Z \in C^0 (\overline{B_{d_0}(x_0,1)}  \times [0,T); \R^n) \cap  C^\infty (B_{d_0}(x_0,1)  \times (0,T); \R^n) $, with initial and boundary values given by the map $F_0$, which represents distance coordinates at time zero. The a priori estimates we prove in Sections \ref{harm}    and  \ref{bi_lip_section} help us to construct this solution, and from the Regularity Theorem \ref{flowthm}, we see that there is $S(n)>0$ and a small $\al(n)>0$ such that the solution is  $1+\al(n)$ bi-Lipschitz for each $t \in (0,S(n)) \cap [0,T/2)$. The explicit construction of $Z$ is carried out in Theorem \ref{RicciDeTurck}.

Hence, we may consider the push forward $\ti g(t) : = (Z_t)_*(g(t)),$ which is by construction a solution to $\de$-Ricci-DeTurck, and $\ti g(t)$ is $\al(n)$ close to $\de$ in the $C^0$-sense. We first restrict to the case that  the push forward of $d_0$ with respect to $F_0$ is generated locally by a continuous
Riemannian metric $\ti g_0$. 
A further application of the regularity theorem, Theorem \ref{flowthm}, yields that $\ti g(t)$ converges locally to the continuous metric $\ti g_0$. This is explained in detail in Theorem \ref{continuous_thm} in Section \ref{continuous_solutions}.

If we assume further that $\ti g_0$ is smooth, and sufficiently close to $\de$, 
then we consider the Dirichlet Solution  $\ell$ to the $\de$-Ricci-DeTurck flow on an Euclidean ball $\B_r(0) \times [0,T] $, with parabolic boundary data given by $\ti g$. The existence of this solution is shown in  Section \ref{Dirichlet_Section}, where Dirichlet solutions to the $\de$-Ricci-DeTurck flow with given parabolic boundary values $C^0$ close to $\de$ are constructed. 
The $L^2$-lemma, Lemma \ref{L2Lemma} of Section \ref{L2Lemma_Section}, tells us that  the (weighted)  spatial $L^2$ norm of the difference $g_1-g_2$ of two solutions $g_1,g_2$ to the $\de$-Ricci-DeTurck flow defined on an Euclidean ball is  non-increasing, if $g_1$ and $g_2$ 
have the same values on the boundary of that ball, and are sufficiently close to $\de$ for all time $t \in [0,T]$.
An application of the $L^2$-lemma  then proves that $\ell = \ti g$. 
The construction of $\ell$, carried out in Section \ref{Dirichlet_Section} guarantees that $\ell$ is smooth on $\B_r(0) \times [0,T]$. Hence $\ti g$ is smooth on $\B_r(0) \times [0,T]$. Section \ref{application} completes the proof of Theorem \ref{smoothness_of_solutions_intro}: the smoothness of $\ti g$  on $\B_r(0) \times [0,T]$ implies that one can extend $g$ smoothly (locally) to $t=0$. In Section \ref{application} we discuss some of the consequences of Theorem \ref{smoothness_of_solutions_intro} in the context of  expanding gradient Ricci solitons with non-negative Ricci curvature.

\subsection{An open problem} The lower bound on the Ricci curvature in \eqref{basic-ass-curv-bd-intro} is used crucially 
to obtain the bound from above for  $d_t$ in \eqref{distestinit}. It is also used  in Section \ref{continuous_solutions}, when showing  that $\ti g(t)$ converges to $\ti g_0$ in the $C^0$ norm. 
\begin{pb}
Can the bound from below on the Ricci curvature in Section \ref{bi_lip_section} and/or other sections be  replaced by a weaker condition?
\end{pb}
\noindent We comment  on this at various points in the paper.

\subsection{Notation} We collect notation used throughout this paper.
\begin{itemize}
\item[(1)] For a connected Riemannian manifold $(M,g),$ $x,y \in M,$ $r\in \R^+$:\\[-2ex]
\begin{itemize}
\item[(1a)]  $(M,d(g))$ refers to the associated metric space, 
$$d(g)(x,y) = \inf_{\ga \in G_{x,y}}L_g(\ga),$$ where $G_{x,y}$ refers to the set of smooth regular curves $\ga:[0,1] \to M,$ with
$\ga(0) = x,$ $\ga(1) = y$, and $L_g(\ga)$ is the length of $\ga$ with respect to $g$.
\item[(1b)] $B_{g}(x,r) := B_{d(g)}(x,r) := \{ y \in M \ | \ d(g)(y,x) < r\}$.
\item[(1c)] If $g$ is locally in $C^2$:  $\Rc(g)$ is the Ricci Tensor, $\Rm(g)$ is the Riemannian curvature tensor,  and  $\Sc(g)$ is the scalar curvature.\\[-2ex]
\end{itemize}
\item[(2)] For a one parameter family $(g(t))_{t\in(0,T)}$ of Riemannian metrics on a manifold $M$, the distance induced by the metric $g(t)$ is denoted either by $d(g(t))$ or $d_t$ for $t\in(0,T)$.\\[-2ex]
\item[(3)] For a metric space $(X,d),$ $x \in M$, $r\in \R^+,$  $B_{d}(x,r) := \{ y \in M \ | \ d(y,x) < r\}.$\\[-2ex]
\item[(4)] $\B_{r}(x)$ refers to an Euclidean ball with radius $r>0$ and centre $x \in \R^n$.
\end{itemize}

\subsection{Acknowledgements}
A.~Deruelle is supported by grant ANR-17-CE40-0034 of the French National Research Agency ANR (Project CCEM) and Fondation Louis D., Project "Jeunes G\'eom\`etres". F.~Schulze is supported by a
Leverhulme Trust Research Project Grant RPG-2016-174. M.~Simon is supported by the SPP 2026 'Geometry at Infinity' of the German Research Foundation (DFG). 

The authors are grateful to the anonymous referees for carefully reading the previous version of this paper, and for their suggestions and comments. 
 These suggestions and comments led to changes which we believe  have  improved  the exposition of the paper.

\section{Ricci-harmonic map heat flow for functions with bounded gradient}{\label{harm}}

\noindent In this section we prove some local results about the Ricci-harmonic map heat flow. 

R.~Hochard, in independent work, proved some results in his PhD-thesis which are similar to some results in this chapter, see \cite[Section II.3.2]{HochardThesis}. Hochard  uses blow up arguments to prove some of his estimates, whereas we use a more direct argument involving the maximum principle applied to various evolving quantities.

The first theorem we present  is a local version of a theorem of Hamilton, \cite[p.~15]{HamFor}, for solutions satisfying \eqref{eq:intro.1} and \eqref{basic-ass-curv-bd-intro}.
\begin{thm} \label{thm-grad-est}
Let $(M^n,g(t))_{t\in [0,T]}$ be a smooth background solution to Ricci flow satisfying \eqref{eq:intro.1} and \eqref{basic-ass-curv-bd-intro} such that
  $B_{g(0)}(x_0,2) \Subset  M$ and $\boundary B_{g(0)}(x_0,1)$  is a smooth $(n-1)$-dimensional manifold. \\[1ex]
Let $Z_0: \overline{B_{g(0)}(x_0,1)}  \to \R^n$  be a smooth map such that 
\begin{itemize}
\item $|\grad Z_0|_{g(0)}^2 \leq c_1$  
\item $Z_0(\overline{B_{g(0)}(x_0,1)}) \subseteq \B_r(0)$ for some $r\leq 2$. 
\end{itemize}
 Then there is a unique  solution $$Z\in C^{\infty}(B_{g(0)}(x_0,1) \times [0,T]; \R^n) \cap 
C^{0}(\overline{ B_{g(0)}(x_0,1) } \times [0,T]; \R^n),$$ to the Dirichlet problem for the Ricci-harmonic map heat flow 
 \begin{equation}
 \begin{split} 
 & \tfrac{\partial}{\partial t} Z  = \lap_{g(t),\de} Z,   \\
 & Z(\cdot,0) = Z_0, \\
 & Z(\cdot,t)|_{\boundary B_{g(0)}(x_0,1)}  = Z_0|_{\boundary B_{g(0)}(x_0,1)}  \ \ \mbox{ for }  t\in [0,T],\label{Z-Dir-Pb}
 \end{split}
 \end{equation}
and constants $c(c_0,c_1,n), S(n,c_0)>0$ such that
\begin{gather} 
 Z_t(\overline{B_{g(0)}(x_0,1)}) \subseteq \B_{r}(0),\quad\mbox{  for all $ t\leq T$,}\label{Z-subset}\\ 
B_{g(t)}\left(x_0, 3/4\right) \subseteq B_{g(0)}(x_0, 1),\quad \text{for all $t\leq \min(T,S(n,c_0))$,}\label{comparison-geo-ball-diff-time}\\
 |\grad^{g(t)} Z(\cdot,t)|_{g(t)}^2 \leq c(c_0,c_1,n),\label{grad-est} 
\end{gather}
on $B_{g(t)}\left(x_0, 1/2\right)$ for all $t\leq \min(T,S(n,c_0))$. 
\end{thm}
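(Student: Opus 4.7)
The key observation is that, since the target $\R^n$ is flat, the Ricci-harmonic map heat flow equation \eqref{Z-Dir-Pb} is componentwise the scalar heat equation $\partt Z^{\al}=\lap_{g(t)}Z^{\al}$, so all three conclusions ultimately reduce to scalar maximum principle arguments. Existence and uniqueness of $Z$ with the claimed regularity follow from classical linear parabolic Dirichlet theory applied to each component on the smooth, relatively compact cylinder $\overline{B_{g(0)}(x_0,1)}\times[0,T]$; the parabolic compatibility condition is automatic because initial and boundary data both coincide with $Z_0$ along the corner $\{0\}\times \boundary B_{g(0)}(x_0,1)$. The containment \eqref{Z-subset} is then immediate from the scalar maximum principle applied to $|Z|^2$, which satisfies $(\partt -\lap_{g(t)})|Z|^2=-2|\grad Z|^2_{g(t)}\leqs 0$ and whose parabolic boundary data are bounded by $r^2$. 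The ball inclusion \eqref{comparison-geo-ball-diff-time} is purely metric: by the lower distance bound of Lemma \ref{SiTopThm1}, any $x$ with $d_t(x,x_0)<3/4$ satisfies $d_0(x,x_0)<3/4+\ga(n)\sqrt{c_0 t}$, which is $<1$ once $t\leqs S(n,c_0):=(4\ga(n))^{-2}c_0^{-1}$.

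The substance of the theorem is \eqref{grad-est}, whose algebraic engine is the identity
\beq
(\partt -\lap_{g(t)})|\grad Z|^2_{g(t)} \;=\; -2\,|\grad^2 Z|^2_{g(t)} \;\leqs\; 0, \label{key-subsol}
\eeq
that is, the Hamilton cancellation: applying Bochner's formula componentwise and combining with $\partt g^{ij}=2R^{ij}$, the term $2R^{ij}\di Z^{\al}\djj Z^{\al}$ coming from the variation of $g^{-1}$ cancels \emph{exactly} the Ricci term $2\Rc(\grad Z^{\al},\grad Z^{\al})$ produced by Bochner, leaving only the nonpositive Hessian contribution. Hence $|\grad Z|^2$ is a subsolution of the heat equation along the background flow.

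To localize \eqref{key-subsol} I would run a Hamilton-Shi cutoff maximum principle argument. Choose $\eta\in C^2(\R;[0,1])$ nonincreasing with $\eta\equiv 1$ on $[0,1/2]$, $\eta\equiv 0$ on $[7/8,\infty)$, and standard bounds $|\eta'|^2\leqs C\eta$, $\eta''\geqs -C$; set $\phi(x,t):=\eta(d_{g(t)}(x,x_0))$. Perelman's distance distortion barrier under $|\Rm|\leqs c_0/t$ gives $(\partt -\lap_{g(t)})d_{g(t)}(\cdot,x_0)\geqs -C(n,c_0)/\sqrt{t}$ in the barrier sense away from $x_0$, and \eqref{comparison-geo-ball-diff-time} ensures $\{\phi>0\}\subset B_{g(0)}(x_0,1)$ for all $t\leqs S(n,c_0)$. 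I would then apply the maximum principle to an auxiliary quantity of the form
\[
Q \;:=\; t\,\phi\cdot|\grad Z|^2_{g(t)}+B\,|Z|^2,\qquad B=B(c_1,r,n,c_0)\gg 1,
\]
where the prefactor $t\phi$ vanishes on the parabolic boundary, while the reservoir $B|Z|^2$ contributes the welcome term $-2B|\grad Z|^2$ to $(\partt -\lap_{g(t)}) Q$, providing a negative multiple of $|\grad Z|^2$ which absorbs the error terms coming from $(\partt -\lap_{g(t)})\phi$ and from the cross term $-2t\langle\grad\phi,\grad|\grad Z|^2\rangle$; the latter is controlled at an interior maximum by combining $\grad Q=0$ with the Kato-type inequality $|\grad|\grad Z|^2|^2\leqs 4|\grad Z|^2|\grad^2 Z|^2$, being absorbed into the favourable $-2t\phi|\grad^2 Z|^2$ delivered by \eqref{key-subsol}. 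A bound on $\max Q$ on $B_{g(t)}(x_0,1/2)\subset\{\phi=1\}$ then yields \eqref{grad-est} with $c=c(c_0,c_1,n)$.

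The main obstacle is the time-dependent localization: the cutoff $\phi$ is only Lipschitz in space and its distortion under the flow carries a $t^{-1/2}$ singularity inherent to Perelman's barrier; the multiplicative factor $t$ in $Q$ together with the reservoir $B|Z|^2$ is precisely what tames this singularity and allows the maximum principle argument to be run down to $t=0$. A technically cleaner alternative is to employ a cutoff in the fixed distance $d_{g(0)}$, whose $g(t)$-Laplacian is controlled via Shi-type derivative estimates, and to compare supports using Lemma \ref{SiTopThm1}, at the cost of slightly more elaborate bookkeeping of $c_0$-dependent error terms.
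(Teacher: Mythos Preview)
Your overall architecture is right --- decoupling into scalar heat equations, the Hamilton cancellation \eqref{key-subsol}, a cutoff, and the reservoir term $B|Z|^2$ --- and it matches the paper's approach. However, the test function $Q = t\,\phi\,|\grad Z|^2 + B|Z|^2$ has a genuine defect: even if your maximum principle argument closes and yields $Q\le C$, on $\{\phi=1\}$ this only gives $|\grad Z|^2\le C/t$, not the \emph{uniform} bound $|\grad Z|^2\le c(c_0,c_1,n)$ asserted in \eqref{grad-est}. The prefactor $t$ kills the contribution of the initial bound $|\grad Z_0|^2\le c_1$ at $t=0$, so that hypothesis never enters your argument. (Your suggested alternative with a time-zero cutoff has a parallel problem: controlling $\lap_{g(t)}$ of a $g(0)$-quantity uniformly in terms of $c_0$ alone is not available, since $|\Rm|\le c_0/t$ gives non-integrable bounds on the evolution of $\Gamma$.)

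The fix, and what the paper does, is to drop the factor $t$ and instead upgrade the cutoff: one constructs a Perelman-type $\eta$ with $\eta(\cdot,t)=0$ outside $B_{g(t)}(x_0,3/4)$, $\eta(\cdot,t)=e^{-k(n,c_0)t}$ on $B_{g(t)}(x_0,1/2)$, $|\grad\eta|^2\le c_3(n)\eta$, and crucially the \emph{exact} inequality $(\partt-\lap_{g(t)})\eta\le 0$ (not merely $\le C/\sqrt t$); see \cite[Section~7]{SiTo1}. With $W:=\eta\,|\grad Z|^2+c_2|Z|^2$ and $c_2$ a dimensional constant, one has $W\le c_1+4c_2$ at $t=0$; at a first interior time where $W=c_1+5c_2$ the cross term $-2\langle\grad\eta,\grad|\grad Z|^2\rangle$ is absorbed by $-2\eta|\grad^2 Z|^2$ via $|\grad\eta|^2\le c_3\eta$, and the reservoir $-2c_2|\grad Z|^2$ absorbs what remains, yielding a contradiction. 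This gives $e^{-kt}|\grad Z|^2\le c_1+5c_2$ on $B_{g(t)}(x_0,1/2)$, which is the uniform bound.
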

%\begin{rmk}
%We are implicitly assuming in the statement of Theorem \ref{thm-grad-est} that the geodesic sphere $\boundary %B_{g(0)}(x_0,1)$  is a smooth $(n-1)$-dimensional manifold.
%\end{rmk}
\begin{proof} We first note that the system for $Z$ actually decouples into $n$ independent linear equations. Since $\overline{B_{g(0)}(x_0,1)}  \subset  M$ is a compact set, and the solution $(M,g(t))_{t\in [0,T]}$ is smooth, by standard theory there is a  unique solution $$Z\in C^{\infty}(B_{g(0)}(x_0,1) \times [0,T]; \R^n) \cap 
C^{0}(\overline{ B_{g(0)}(x_0,1) } \times [0,T]; \R^n),$$ to the Dirichlet problem \eqref{Z-Dir-Pb}.

For the sake of clarity, we omit the dependence of the Levi-Civita connections on the metrics $(g(t))_{t\in[0,T]}$.

The statement \eqref{Z-subset} follows from the Maximum Principle and the evolution equation for $|Z|^2 = \sum_{i=1}^n (Z^i)^2$: 
\begin{equation}
\left(\partt -\lap_{g(t)}\right)|Z|^2  =  -2|\grad Z|_{g(t)}^2. \label{evo-equ-Z}
\end{equation}
\\
Statement \eqref{comparison-geo-ball-diff-time} follows from the distance estimates, 
\eqref{distestinit}, which  hold on  $B_{g(0)}(x_0,1)$ for any solution to Ricci flow satisfying \eqref{eq:intro.1}, \eqref{basic-ass-curv-bd-intro} and $B_{g(0)}(x_0,2) \Subset M$: see  \cite[Lemma 3.1]{SiTo1}.

Regarding \eqref{grad-est}, we first recall the following fundamental evolution equation satisfied by $|\nabla Z|^2_{g(t)}$:
\begin{equation}
\left(\partt -\lap_{g(t)}\right)|\nabla Z|_{g(t)}^2  =  -2|\grad^2 Z|_{g(t)}^2. \label{grad-evo-equ}
\end{equation}
Notice that the term $\Rc(g(t))(\nabla Z,\nabla Z)$ showing up in the Bochner formula applied to $\nabla Z$ cancels with the pointwise evolution equation of the squared norm of $\nabla Z$ along the Ricci flow. 

In case the underlying manifold is closed, the use of the maximum principle would give us the expected result.

In order to localize this argument, we construct a Perelman type cut-off function $\eta: M \to [0,1]$ with $\eta(\cdot,t) = 0$ on $M\backslash B_{g(t)}(x_0,\frac 3 4)$ and
$ \eta(\cdot,t) = e^{-k(n,c_0) t}$ on $B_{g(t)}(x_0,\frac 1 2)$  such that $\partt \eta(\cdot,t) \leq \lap_{g(t)} \eta (\cdot,t)$ everywhere, and $|\grad \eta |_{g(t)}^2 \leq c_3(n) \eta $ everywhere, as long as $t\leq \min(S(n,c_0),T)$ : see, for example, \cite[Section 7]{SiTo1} for details. 

We consider the function $W:= \eta  |\grad Z|_{g}^2 + c_2|Z|^2 $, with 
$c_2 = 10 c(n) c_3(n).$  The quantity $W$ is less than $c_1 + 4c_2$ everywhere at time zero.
We consider a first time and point where  $W$  becomes equal to $c_1 + 5c_2$ on $\overline{ B_{g(0)}(x_0,1)}$. 
This must happen  in $B_{g(0)}(x_0,1)$, since $\eta =0$  on a small open set $U$  containing 
$\boundary B_{g(0)}(x_0,1) $ and $c_2|Z|^2 <  4c_2$ by \eqref{Z-subset}.  At such a point and time $(x,t)$, we have by \eqref{evo-equ-Z} and \eqref{grad-evo-equ} together with the properties of $\eta$,
\begin{equation*}
\begin{split}
0 &\leq  \left(\partt -\lap_{g(t)}\right) W (x,t) \\
& \leq  - 2c_2 |\grad Z|_{g(t)}^2 
-2\eta |\grad^2 Z|_{g(t)}^2   - 2 g(t)( \grad \eta, \grad |\grad Z|_{g(t)}^2) \\
& \leq - 2c_2 |\grad Z|_{g(t)}^2 
-2\eta |\grad^2 Z|_{g(t)}^2   + 4c(n)\frac{|\grad \eta|_{g(t)}^2}{\eta}  |\grad Z|^2_{g(t)} + \eta |\grad^2 Z|_{g(t)}^2\\
& < 0,
\end{split}
\end{equation*}
 by the choice of $c_2$, which yields a contradiction. Hence $W(x,t) \leq c_1 + 5c_2$ for all $t \leq S(n,c_0),$ which implies
\begin{equation*}
  e^{-kt}|\grad Z|_{g(t)}^2(\cdot,t) \leq e^{-kt}|\grad Z|_{g(t)}^2(\cdot,t) + c_2|Z|^2(\cdot,t) \leq (c_1 + 5c_2),
  \end{equation*}
   on $B_{g(t)}(x_0,\frac 1 2)$ for all $t \leq \min(S(n,c_0),T)$. This gives
$$ |\grad Z|_{g(t)}^2(\cdot,t) \leq e^{kS}(c_1 + 5c_2),$$ on $B_{g(t)}(x_0,\frac 1 2)$ for all $t \leq \min(S(n,c_0),T)$ as required.
\end{proof}

We aim to prove an estimate for the second covariant derivatives of a solution to the Ricci-harmonic map flow. In fact, once we have a solution to the Ricci-harmonic map heat flow with bounded gradient, the solution smoothes out the second derivatives in a controlled way, as the following theorem shows.

\begin{thm}\label{reg_harm}
For all $c_1>0$ and $n\in \N$, there exists $\hat \varepsilon_0(c_1,n)>0$ such that the following is true.
Let $(M^n,g(t))_{t\in [0,T]}$ be a smooth solution to Ricci flow such that
\begin{eqnarray*}
\Rc(g(t))  \geq -1,\quad |\Rm(\cdot,t)| \leq \frac{\varepsilon^2_0}{t},\quad \text{ for all } t\in(0,T],
\end{eqnarray*}
  where $\ep_0 \leq \hat \ep_0$.
Assume furthermore that  $B_{g(0)}(x_0,1) \Subset M$,  
and $ Z: B_{g(0)}(x_0,1) \times [0,T] \to \R^n$ is a smooth solution to the Ricci-harmonic map heat flow 
$$ \partt Z(x,t) = \lap_{g(t)} Z(x,t),$$ for all $(x,t) \in B_{g(0)}(x_0,1) \times [0,T]$,
such that 
$|\grad^{g(t)} Z(\cdot,t)|_{g(t)}^2 \leq c_1$ on $B_{g(t)}(x_0, 1)$ for all $t\in [0,T]$.
Then 
%\item  \label{Z-subset-bis} $Z_t(\overline{B_{g(0)}(x_0,1)}) \subseteq B_{\de}(0,r)$, 
 \begin{eqnarray*}
  t|\grad^{g(t),2} Z(\cdot,t)|_{g(t)}^2 \leq c(n,c_1),
 \end{eqnarray*}
 on $B_{g(t)}(x_0, 1/4)$ for all $t\leq \min\{S(n),T\}$, where $S(n)>0$ is a constant just depending on $n$.
\end{thm}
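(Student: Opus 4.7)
The plan is a Bernstein-type argument: we shall apply the parabolic maximum principle to the auxiliary quantity
\begin{equation*}
F := t\,\phi\,|\nabla^2 Z|_{g(t)}^2 + K\,|\nabla Z|_{g(t)}^2,
\end{equation*}
where $\phi$ is a Perelman-type spatial cutoff and $K = K(n)$ will be chosen below. Since the target of the Ricci-harmonic map heat flow is Euclidean, each component $u := Z^a$ satisfies the scalar heat equation $\partial_t u = \Delta_{g(t)} u$. Combining the Bochner formula, the commutator identity $\nabla_i \nabla_j \Delta u - \Delta \nabla_i \nabla_j u = \Rm \ast \nabla^2 u + \nabla \Rm \ast \nabla u$, and the evolution of the Christoffel symbols under Ricci flow will yield schematically
\begin{equation*}
(\partial_t - \Delta_{g(t)})|\nabla^2 u|^2 \leq -2|\nabla^3 u|^2 + C(n)|\Rm|\,|\nabla^2 u|^2 + C(n)|\nabla \Rm|\,|\nabla u|\,|\nabla^2 u|.
\end{equation*}
The hypothesis $|\Rm| \leq \ep_0^2/t$ together with standard localised derivative estimates for Ricci flow, valid on a slightly smaller ball for $t \leq S(n)$, will produce $|\nabla \Rm(\cdot,t)| \leq C(n)\,\ep_0^2\,t^{-3/2}$.

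Next, following the construction used in the proof of Theorem \ref{thm-grad-est}, I would take a smooth cutoff $\phi \colon M \times [0, T) \to [0,1]$ with $\phi(\cdot,t) \equiv 1$ on $B_{g(t)}(x_0, 1/4)$, $\phi(\cdot,t) \equiv 0$ outside $B_{g(t)}(x_0, 1/2)$, satisfying $|\nabla \phi|_{g(t)}^2 \leq C(n)\phi$ and $(\partial_t - \Delta_{g(t)})\phi \leq 0$ for $t \leq S(n)$, as in \cite[Section 7]{SiTo1}. Recalling $(\partial_t - \Delta)|\nabla Z|^2 = -2|\nabla^2 Z|^2$ from \eqref{grad-evo-equ}, we expand $(\partial_t - \Delta)(t\phi B)$ with $B := |\nabla^2 Z|_{g(t)}^2$ via the Leibniz rule and absorb the cross term $-2t\langle \nabla \phi, \nabla B\rangle$ using Kato's inequality $|\nabla B|^2 \leq 4|\nabla^3 Z|^2\,B$ combined with $|\nabla \phi|^2 \leq C\phi$, which will produce
\begin{equation*}
(\partial_t - \Delta) F \leq \bigl[\phi(1 + C(n)\ep_0^2) + C(n)\,t - 2K\bigr]\,B + C(n)\,\ep_0^2\sqrt{c_1}\,\phi\,\sqrt{B/t}.
\end{equation*}

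We then choose $K = K(n)$ large and $S(n)$ small so that the bracket above is $\leq -1$ for all $t \leq S$. At any interior space-time maximum $(y, t_0)$ of $F$ with $\phi(y, t_0) > 0$ and $0 < t_0 \leq \min(S, T)$, the parabolic maximum principle gives $(\partial_t - \Delta) F(y, t_0) \geq 0$; rearranging and dividing by $\sqrt{B(y, t_0)}$ yields $\sqrt{t_0\,B(y, t_0)} \leq C(n, c_1)$, whence $F(y, t_0) \leq C(n, c_1)$. On the initial slice one has $F(\cdot, 0) = K|\nabla Z_0|^2 \leq Kc_1$, and in the region $\{\phi = 0\}$ one also has $F \leq Kc_1$, so $F \leq C(n, c_1)$ throughout $M \times [0, \min(S, T)]$. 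Restricting to $\{\phi \equiv 1\} \supseteq B_{g(t)}(x_0, 1/4)$ will give the claimed estimate $t\,|\nabla^2 Z|_{g(t)}^2 \leq C(n, c_1)$.

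The principal obstacle is the $|\nabla \Rm|$ term in the evolution of $|\nabla^2 Z|^2$: it carries a singular $t^{-3/2}$ factor which, after multiplying by the weight $t$, only becomes $t^{-1/2}$. A naive application of Young's inequality at the level of the evolution inequality would produce a $1/t$ contribution that is not integrable in time; the resolution will be to postpone Young's to the maximum-principle step, where the structural quadratic-in-$\sqrt{B}$ inequality can be solved directly for $\sqrt{tB}$. The hypothesis $\ep_0 \leq \hat{\ep}_0(c_1, n)$ enters through the derivative estimate for curvature and through the $(1 + C(n)\ep_0^2)$ coefficient of $B$, ensuring that the final constant depends only on $n$ and $c_1$.
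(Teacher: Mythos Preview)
Your argument is correct and takes a genuinely different route from the paper. The paper uses the \emph{multiplicative} auxiliary quantity $W := t(a_0 + |\nabla Z|^2)|\nabla^2 Z|^2$ in the style of Shi's derivative estimates: the product structure generates the good quartic term $-2t|\nabla^2 Z|^4$ and yields a Bernoulli-type inequality $\partial_t W \le \Delta W + W/t - W^2/(4a_0^2 t) + a_0^2\xi_0/t$, which is then localised by multiplying by a Perelman cutoff $\eta$. Your \emph{additive} quantity $F = t\phi|\nabla^2 Z|^2 + K|\nabla Z|^2$ instead exploits $(\partial_t - \Delta)|\nabla Z|^2 = -2|\nabla^2 Z|^2$ directly to absorb the zeroth-order $B$ terms, and handles the singular $|\nabla\Rm|$ contribution by solving an algebraic inequality at the maximum point rather than via the nonlinear ODE structure. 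Both are standard Bernstein techniques; the paper's localisation is cleaner because $\eta W$ is compactly supported, whereas your $F$ contains the uncut term $K|\nabla Z|^2$. This is a point you should repair: the bound $|\nabla Z|^2 \le c_1$ is only assumed on $B_{g(t)}(x_0,1)$, not on all of $B_{g(0)}(x_0,1)$, so your claim that $F \le Kc_1$ on $\{\phi = 0\}$ is not justified as written. The fix is to run the maximum principle on a fixed compact domain such as $\overline{B_{g(0)}(x_0, 3/4)}$, which by the distance estimates is contained in $B_{g(t)}(x_0, 1)$ and contains $\mathrm{supp}\,\phi$ for $t \le S(n)$. Incidentally, your worry about the $1/t$ term being non-integrable in time is a red herring: since you argue pointwise at a maximum rather than integrate in $t$, applying Young's inequality already at the evolution level would close equally well.
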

\begin{rmk}
The condition $|\Rm(\cdot,t)| \leq \varepsilon^2_0/t $ where $\varepsilon_0 \leq \hat \varepsilon_0(c_1,n)$ is sufficiently small
is not necessary : $|\Rm(\cdot,t)| \leq k/t $ with  $k$ arbitrary is sufficient for the argument, as can be seen by examining the proof, but then the conclusions remain only valid on the time interval
$(0,S(c_1,k,n)]$, where $S(c_1,k,n)>0$ is sufficiently small. We only consider small $k$, as this is sufficient for the setting of  the following chapters. A version of this theorem, with $c_1 =c(n)$ and the condition $|\Rm(\cdot,t)| \leq k/t, $ $k$ arbitrary, was independently proven by R.~Hochard using a contradiction argument: see \cite[Lemma II.3.9]{HochardThesis}.
\end{rmk}

%\gcmt{Is the smallness of $\varepsilon$ really needed ?}

%\bcmt{It looks like to me from the proof that smallness is not needed.}

\begin{proof}
%For $\ti Z(\cdot,t) : = F(\cdot,t) -F(x_0,0$ is still a solution to the Ricci-harmonic map heat flow,
% but now we have $\ti %F(x_0,0)=0$ and hence, using the Lipschitz property, we see $|\ti F(x,0)| \leq c(n)c_1$.
%Using a Perelman cutoff function as in the previous Lemma, we see that 
%\begin{eqnarray}
%\partt( |\ti F|^2 \eta )&& = \lap_{g} (|\ti F|^2 \eta) - 2\eta |\grad \ti F|^2_{g}-2g( \grad \eta, \grad |\ti F|^2) \cr
%&& \leq  \lap_{g} (|\ti F|^2 \eta)  - 2\eta |\grad \ti F|^2_{g} +  c(n) |\grad \eta|_{g} |\grad \ti F|_{g} |\ti F|  \cr
%&& \leq  \lap_{g} (|\ti F|^2 \eta)  - \eta |\grad \ti F|^2_{g} +
%  4 c^2(n)|\grad \ti F|^2  \frac{|\grad \eta|_g^2}{\eta} + ( \eta |\ti F|^2)  \cr
%  && \leq \lap_{g} (|\ti F|^2 \eta) +4c^2(n)c_1^2 
%\end8eqnarray}
%and hence 
%$|\ti F|^2 \leq c(c_1,n
In the following, we denote constants $C(\varepsilon_0,c_1,n)$ simply by $\xi_0$ if 
$C(\varepsilon_0,c_1,n) $ goes to $0$ as $\varepsilon_0$ tends to $0$  and $c_1$ and $n$ remain  fixed. 
For example $c_1^2 n^4 \varepsilon_0$ and  $b(c_1,n) \sqrt{\varepsilon_0} $ are denoted by $\xi_0$ if $b(c_1,n)$ is a constant depending on $c_1$ and $n$, and $d(c_1,n)\sqrt{\xi_0} $ may be replaced by $\xi_0,$
if $d(c_1,n)$ is a constant depending on $c_1$ and $n$ only.\\[1ex]
For the sake of clarity in the computation to follow, we use the notation $\grad$ to denote $\grad^{g(t)}$ at a time $t$, $\Riem$ to denote $\Riem(g(t))$ at a time $t$, et cetera, although the objects in question do indeed depend  on the evolving metric. By standard commutator identities for the second covariant derivatives of a tensor $T$,
$\grad^2 T (V,P, \cdot) = \grad^2 T (P,V, \cdot) + (\Rm *T )(V,P,\cdot)$: see for example \cite{Jeff_Lecture_Notes}, we obtain
\begin{equation*}
\begin{split}
\partt \grad^2_{ij} Z^k & = \grad^2_{ij} \Big(\partt Z\Big)^k + (\grad \Rc * \grad Z )^k_{ij}\\
& =  \grad^2( \lap Z)_{ij}^k + (\grad \Rc * \grad Z )^k_{ij}\\
& = \lap ( \grad_i \grad_j Z^k ) +  (\Rm * \grad^2 Z)_{ij}^k +  (\grad \Riem * \grad Z)^k_{ij}.
\end{split}
\end{equation*}
Shi's estimates and the distance estimates \eqref{distestinit} guarantee that $|\grad \Rm(g(t))| \leq \xi_0t^{-3/2}$ for $t \leq S(n)$
on $B_{g(t)}(x_0,3/4)$ : see for example   Lemma \ref{RicciFlowBasics} (after scaling once by $400$).
On $B_{g(t)}(x_0,3/4)$, we see for $t \leq S(n)$, that
\begin{equation}
\begin{split} 
\partt |\grad^2 Z|_{g(t)}^2 &  \leq  \lap_{g(t)} ( |\grad^2 Z|_{g(t)}^2)    - 2|\grad^3 Z|_{g(t)}^2 + c(n) |\Rm(g(t))|_{g(t)}|\grad^2 Z|_{g(t)}^2  \\
&\quad + c(n) |\grad \Rm(g(t))|_{g(t)}|\grad^2  Z|_{g(t)} |\grad Z|_{g(t)}\\
& \leq  \lap_{g(t)} ( |\grad^2 Z|_{g(t)}^2)    - 2|\grad^3 Z|_{g(t)}^2  + \frac{\xi_0}{t} |\grad^2 Z|_{g(t)}^2 \\
&\ \ \  +\frac{\xi_0}{t^{\frac 3 2}} | \grad^2  Z|_{g(t)} |\grad Z|_{g(t)} \\
 & \leq  \lap_{g(t)} ( |\grad^2 Z|^2_{g(t)})    - 2|\grad^3 Z|_{g(t)}^2 +  \frac{\xi_0}{t} |\grad^2 Z|_{g(t)}^2 + \frac{\xi_0}{t^2} |\grad Z|_{g(t)}^2.\label{evo-equ-hessian-Z}
 \end{split}
 \end{equation}
 For $a_0 \geq 1$, let $$W:= t(a_0 + |\grad Z|_{g(t)}^2 ) |\grad ^2 Z|_{g(t)}^2.$$ 
 Using \eqref{grad-evo-equ} together with \eqref{evo-equ-hessian-Z},  we see that 
 \begin{equation*}
 \begin{split} 
 \left(\partt -\lap_{g(t)}\right) W  &= \left(\partt -\lap_{g(t)}\right)\Big(  t(a_0 + |\grad Z|_{g(t)}^2 ) |\grad^2 Z|_{g(t)}^2 \Big) \\
 &= (a_0 +  |\grad Z|_{g(t)}^2) |\grad^2 Z|_{g(t)}^2 +t\left(\partial_t-\Delta_{g(t)}\right)\left(|\nabla Z|^2_{g(t)}\right)\cdot|\nabla^2Z|^2_{g(t)}\\
 &\quad +t(a_0+|\nabla Z|^2_{g(t)})\left(\partial_t-\Delta_{g(t)}\right)|\nabla^2 Z|^2_{g(t)}-2 t   g(t)\left( \grad |\grad Z|_{g(t)}^2, \grad |\grad^2 Z|_{g(t)}^2\right)\\
&  \leq (a_0 +  |\grad Z|_{g(t)}^2) |\grad^2 Z|_{g(t)}^2   - 2 t|\grad^2 Z|_{g(t)}^4- 2a_0t|\grad^3 Z|_{g(t)}^2 \\
&\quad +(1+a_0)\xi_0 |\grad^2 Z|_{g(t)}^2  + \frac{(1+a_0)\xi_0}{t}  -2 t   g(t)\left( \grad |\grad Z|_{g(t)}^2, \grad |\grad^2 Z|_{g(t)}^2\right) \\
& \leq   \frac{W}{t}  - 2 t|\grad^2 Z|_{g(t)}^4 - 2a_0t|\grad^3 Z|_{g(t)}^2+ a_0\xi_0 |\grad^2 Z|_{g(t)}^2 \\
&\quad + \frac{a_0\xi_0}{t} + t  c(n,c_1) |\grad^3 Z|_{g(t)} |\grad^2 Z|_{g(t)}^2 \\
& \leq  \frac{W}{t} + (c(n,c_1)-2a_0)t|\grad^3 Z|_{g(t)}^2  -  t|\grad^2 Z|_{g(t)}^4 + a_0\xi_0 |\grad^2 Z|_{g(t)}^2
+ \frac{a_0\xi_0}{t}\\
& \leq \frac{W}{t} + (c(n,c_1)-2a_0)t|\grad^3 Z|_{g(t)}^2  -  \frac{t}{2}|\grad^2 Z|_{g(t)}^4 
+ \frac{a^2_0\xi_0}{t},\\
\end{split}
\end{equation*}
where $c(n,c_1)$ denotes a positive constant depending on the dimension $n$ and the Lipschitz constant $c_1$, that may vary from line to line and we have used Young's inequality freely. Thus
\begin{equation}
\begin{split}
\partial_tW&\leq \lap_{g(t)} W+  \frac{W}{t} - \frac{W^2}{2(a_0+ c_1)^2t}  + \frac{a^2_0 \xi_0}{t},\cr
& \leq   \lap_{g(t)} W+  \frac{W}{t} - \frac{W^2}{4a_0^2t}  + \frac{a^2_0 \xi_0}{t}, \label{sub-sol-W}
\end{split}
\end{equation}
if $a_0$ is chosen sufficiently large such that $a_0\geq c(n,c_1)$.

In case the underlying manifold is closed, the use of the maximum principle would give us the expected result:  if there is a first time and point $(x,t)$ where $W(x,t)  = 10a_0^2$ for example, we obtain a contradiction. Hence we must have
$W \leq 10a_0^2$.

In order to localize this argument, we introduce, as in the proof of Theorem \ref{thm-grad-est}, a Perelman type cut-off function $\eta: M \to [0,1]$ with $\eta(\cdot,t) = 0$ on $B^c_{g(t)}(x_0,2/3)$ and
$ \eta(\cdot,t) = e^{-t}$ on $B_{g(t)}(x_0,1/2)$  such that $\partt \eta(\cdot,t) \leq \lap_{g(t)} \eta (\cdot,t)$ everywhere, and $|\grad \eta |_{g(t)}^2 \leq c(n) \eta $ everywhere, as long as $t\leq S(n) \leq 1$: see for example \cite[Section 7]{SiTo1} for details.

We first derive the evolution equation of the function $\hat{W}:=\eta W$ with the help of inequality \eqref{sub-sol-W} for $t\leq S(n)$:
\begin{equation}
\partial_t\hat{W}\leq \Delta_{g(t)}\hat{W}-2\langle\nabla \eta,\nabla W\rangle_{g(t)} + \frac{\hat W}{t} - \eta\frac{W^2}{4 a^2_0t}+ \eta\frac{a_0^2\xi_0}{t} \, .
\label{evo-equ-W-cut-off}
\end{equation}
Thus
\begin{equation*}
\begin{split}
\eta\left(\partial_t-\Delta_{g(t)}\right)\hat{W}&\leq   -2\eta\langle\nabla \eta,\nabla W\rangle_{g(t)} +  \frac{\eta \hat W}{t} -\frac{\hat{W}^2}{4a_0^2t}+ \eta^2\frac{a^2_0 \xi_0}{t}\\
&\leq -2\langle\nabla \eta,\nabla\hat{W}\rangle_{g(t)}+c(n)\hat{W} +  \frac{\hat W}{t}- \frac{\hat{W}^2}{4 a^2_0t}+ \frac{a^2_0\xi_0}{t}\ \label{final-evo-equ-loc-W}
\end{split}
\end{equation*}
where again, $c(n)$ denotes a positive constant depending on the dimension only and which may vary from line to line. If the maximum of $\hat{W}$ at any time is larger that $100a_0^2$ then this value must be achieved  at some first time and point $(x,t)$ with $t>0$
, since $\hat W(\cdot,0) = 0$. This leads to a contradiction if $t \leq S(n) \leq 1$.
 \end{proof}

 \section{Almost isometries, distance coordinates  and Ricci-harmonic map heat flow  }\label{bi_lip_section}
 
In this first subsection we set up the problem we will be investigating for the remaining chapter, collect some background material and give an outline of the further strategy.

For convenience we recall a slightly refined version of Lemma \ref{SiTopThm1} in the introduction, where $\beta(n)>0$ is the constant appearing in \cite[Lemma 3.1]{SiTo2}.

\begin{lemma}(Simon-Topping, \cite[Lemma 3.1]{SiTo2}).
Let $(M,g(t))_{t\in(0,T)}$, $T \leq 1$, be a smooth Ricci flow where 
 $M$ is connected but $(M,g(t))$ not  necessarily complete. Assume that for some $\ep_0>0$ and  $R>100\be^2(n)\ep^2_0+200$ it holds that $B_{g(t)}(x_0,200R) \Subset M $ for all $t\in (0,T)$ as well as\\[-3ex]
\begin{align} 
& |\Riem(\cdot,t)| \leq \frac{\ep^2_0}{(1+\be^2(n))t}\ \  \mbox{on} \ \  B_{g(t)}(x_0,200R) \Subset M  
 \ \mbox{ for all } \  t \in (0,T)  \tag{$\tilde{a}$}  \label{tildea}\\
& \Rcci(g(t))\geq -1 \ \  \mbox{on} \ \   B_{g(t)}(x_0,200R) \Subset M  \ \mbox{ for all } \  t \in (0,T).  \tag{$\tilde{b}$} \label{tildeb}
 \end{align}
Then for all $r,s \in (0,T)$ it holds that
\begin{equation*}
 e^{t-r}d_r  \geq d_t  \geq d_r - \ep_0 \sqrt{t-r} \qquad \forall\,t\in [r,T) \mbox{ on }  B_{g(s)}(x_0,50R) .
\end{equation*}

\end{lemma}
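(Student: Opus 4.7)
The two inequalities are proved by separate arguments: the upper bound on $d_t$ comes from the Ricci lower bound \eqref{tildeb}, while the lower bound on $d_t$ comes from Hamilton's shrinking balls argument together with the curvature bound \eqref{tildea}. Both amount to pointwise evolution inequalities for $t\mapsto d_t(x,y)$ which are valid in the barrier (or Perelman-style) sense at smooth points of the distance function and can then be integrated along any pair $r\leq t$. The only subtlety is to keep all the relevant minimising geodesics inside the region where \eqref{tildea}--\eqref{tildeb} hold, which is the reason for the gap between the ball $B_{g(s)}(x_0,50R)$ on which the conclusion is stated and the ball $B_{g(t)}(x_0,200R)$ on which the curvature hypotheses are imposed.

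For the upper bound I would proceed as follows. Fix $x,y\in B_{g(s)}(x_0,50R)$ and consider the time interval on which a minimising $g(t)$-geodesic from $x$ to $y$ stays in $B_{g(t)}(x_0,200R)$ (which, by a simple continuity/openness argument together with $R\geq 200$, contains the full interval $[r,T)$ after restricting to a slightly smaller concentric ball if necessary). Along such a geodesic, the standard computation $\partial_t L_{g(t)}(\gamma)=-\int_\gamma \Rc(g(t))(\dot\gamma,\dot\gamma)\,ds$ combined with \eqref{tildeb} gives $\partial_t d_t(x,y)\leq d_t(x,y)$ at smooth times, and in the usual barrier sense at all times. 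Gronwall then yields $d_t\leq e^{t-r}d_r$.

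For the lower bound I would use Hamilton's shrinking balls estimate in the form: if $\Rc(g(t))\leq (n-1)k$ on $B_{g(t)}(x_0,r_0)$ and $d_t(x_0,y)\geq r_0$, then
\begin{equation*}
\partial_t d_t(x_0,y)\geq -\tfrac{2(n-1)}{3}\bigl(k r_0+r_0^{-1}\bigr)
\end{equation*}
in the barrier sense. I would apply this with the time-dependent choice $r_0(t)=\sqrt{(1+\beta^2(n))t}/\varepsilon_0$, which under \eqref{tildea} (giving $k(t)=\varepsilon_0^{2}/((1+\beta^{2})(n-1)t)$ up to a dimensional constant) minimises the right-hand side and produces a bound of the form $\partial_t d_t(x_0,y)\geq -C(n)\varepsilon_0/\sqrt{(1+\beta^2(n))\,t}$. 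Choosing $\beta(n)$ large enough so that $C(n)/\sqrt{1+\beta^2(n)}\leq \tfrac12$, and integrating from $r$ to $t$, I obtain
\begin{equation*}
d_t(x_0,y)\geq d_r(x_0,y)-\varepsilon_0\bigl(\sqrt{t}-\sqrt{r}\bigr)\geq d_r(x_0,y)-\varepsilon_0\sqrt{t-r}.
\end{equation*}
A triangle-inequality trick (writing $d_t(x,y)$ as a supremum or infimum of differences $d_t(x_0,z)-d_t(x_0,w)$, or alternatively repeating the argument with $x$ playing the role of $x_0$) then upgrades this to the two-point estimate, yielding $d_t\geq d_r-\varepsilon_0\sqrt{t-r}$ on $B_{g(s)}(x_0,50R)$.

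The main technical obstacle is ensuring that the hypothesis of Hamilton's shrinking balls lemma is actually verified for the radius $r_0(t)$ chosen above, throughout the whole time interval $[r,T)$ and for all basepoints in $B_{g(s)}(x_0,50R)$. Since $T\leq 1$ one has $r_0(t)\leq \sqrt{1+\beta^2(n)}/\varepsilon_0$, and the condition $R>100\beta^2(n)\varepsilon_0^{2}+200$ is exactly what is needed so that the balls $B_{g(t)}(x,r_0(t))$ remain inside $B_{g(t)}(x_0,200R)$ for all $x\in B_{g(s)}(x_0,50R)$; combining this with the already-established upper bound (which controls how $B_{g(s)}$ compares to $B_{g(t)}$ on the relevant time scale) closes the argument. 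A continuity-in-$t$ bootstrap ensures that one never leaves the region of validity during the integration.
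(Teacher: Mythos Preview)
The paper does not give its own proof of this lemma: it is stated as a direct citation of \cite[Lemma 3.1]{SiTo2}, with the constant $\beta(n)$ explicitly referred to as ``the constant appearing in \cite[Lemma 3.1]{SiTo2}''. So there is no in-paper argument to compare against.

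That said, your outline is exactly the standard Simon--Topping argument: the upper bound $d_t\leq e^{t-r}d_r$ from the Ricci lower bound via the length-variation formula and Gronwall, and the lower bound $d_t\geq d_r-\ep_0\sqrt{t-r}$ from the Hamilton--Perelman distance-distortion estimate with the time-dependent radius $r_0(t)\sim \sqrt{t}/\sqrt{c_0}$, integrated and combined with $\sqrt{t}-\sqrt{r}\leq \sqrt{t-r}$. The continuity bootstrap you describe (using the already-established upper bound to keep $B_{g(s)}(x_0,50R)$ inside the region where the curvature hypotheses hold at all later times) is also the right mechanism.

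One point to tighten: your assertion that the numerical hypothesis $R>100\beta^2(n)\ep_0^2+200$ is ``exactly what is needed'' to fit $B_{g(t)}(x,r_0(t))$ inside $B_{g(t)}(x_0,200R)$ does not check out as written. With $r_0(t)\sim \sqrt{(1+\beta^2)t}/\ep_0$ and $T\leq 1$, the maximal radius scales like $\beta/\ep_0$, which for small $\ep_0$ is not controlled by $100\beta^2\ep_0^2+200$. In the actual \cite{SiTo2} argument this is handled by a case split (the Perelman estimate is only invoked once $d_t(x,y)\geq 2r_0(t)$, and the short-distance case is covered separately), together with the bootstrap on the time interval; you should make that split explicit rather than claim the radius condition follows directly from the stated lower bound on $R$.
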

\medskip
This motivates for us to work with the setup that $(M,g(t))_{t\in (0,T)}$ is a smooth solution to Ricci flow, with 
\begin{align} 
& |\Riem(\cdot,t)| \leq \ep^2_0/t\ \  \mbox{on} \ \  B_{g(t)}(x_0,200R) \Subset M  
 \ \mbox{ for all } \  t \in (0,T)  \tag{${a}$}  \label{a}\\
& \Rcci(g(t))\geq -1 \ \  \mbox{on} \ \   B_{g(t)}(x_0,200R) \Subset M  \ \mbox{ for all } \  t \in (0,T)  \tag{${b}$}  \label{b}\\
& e^{t-r}d_r  \geq d_t  \geq d_r - \ep_0 \sqrt{t-r} \qquad \forall\,t\in [r,T) \mbox{ on }  B_{g(s)}(x_0,50R)    \ \ \forall\   r,s \in (0,T) \label{distest}  \\
&  B_{g(s)}(x_0,50R)   \Subset M \mbox{ for all } s \in (0,T). \label{distincl} 
  \end{align}
As in Lemma \ref{SiTopThm1}, if \eqref{distest} and \eqref{distincl}  hold, then  there exists a unique limiting  metric as $t\downto 0$:
\begin{equation}
\begin{split}
& \ \ d_0 = \lim_{t\downto 0} d_t  \mbox{ exists, is unique and is a metric  on}  \   X:=  \cap_{s \in (0,T)} B_{g(s)}(x_0,50R),  \label{definitiond_0}  \\[-1.5ex]
&  \ \    B_{d_0}(x_0,20R) \subseteq \curlX \subseteq   B_{g(s)}(x_0,50R)   \Subset M, \forall\,  s \in  \left(0, \min\left( \tfrac{R^2}{1+\ep_0^2 },  \log(2),T\right ) \right ),  \cr
&  \  \ \mbox{and the topology of }  B_{d_0}(x_0,20R) \mbox{ induced by }  d_0  \mbox{ agrees with  the topology ≠} \\ 
& \ \ \mbox{of the set }  B_{d_0}(x_0,20R) \subseteq M \mbox{ induced by } M,
\end{split}
\end{equation}
where $\curlX$ is the connected component of the interior of $\cap_{s\in (0,T)} B_{g(s)}(x_0,50R) $ containing $x_0,$ and the limit is obtained uniformly: the existence of a unique limit follows from the fact that $d_t(\cdot,\cdot)$ is Cauchy in $t$, and the inclusions and statement about the topology 
follow from the triangle inequality and the definition of the interior of a set.

The estimates of Shi yield the following time interior decay.
\begin{lemma}\label{RicciFlowBasics}
Let $(M,g(t))_{t\in(0,T)}$ be a smooth, not necessarily complete,  solution to Ricci flow  satisfying \eqref{a}, \eqref{b},  \eqref{distest} and \eqref{distincl}  for some $R \geq 1.$ Then
 \begin{equation}
\sum_{i=0}^j |\grad^i  \Riem(\cdot,t)|^2 \leq \frac{\be(k,n,\ep_0)}{t^{2+j}} \label{shi_est}
\end{equation}
for all $x \in B_{d_0}(x_0,10R)$ for all $t \in (0, \min(  (1+\ep_0^2)^{-1}R^2,  \log(2),T) ),$
where $\be(k,n,\ep_0) \to 0$ for fixed $k$ and $n$, as $\ep_0 \to 0,$ and $d_0$ is the metric described in \eqref{definitiond_0}.
\end{lemma}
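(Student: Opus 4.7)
The plan is to apply the standard local Shi derivative-of-curvature estimate at each point $(x,t_0)$ with $x \in B_{d_0}(x_0,10R)$ and $t_0$ in the allowed range, after carving out a parabolic cylinder on which the hypotheses of that estimate are in force. The bound \eqref{a} says $|\Rm(\cdot,s)|\leq \ep_0^2/s$, which on the window $[t_0/2,t_0]$ is dominated by the uniform constant $K:=2\ep_0^2/t_0$. Shi's local estimate, applied on a parabolic neighborhood of spatial radius $r$ and time length $t_0/2$, yields
\begin{equation*}
|\grad^m \Rm|(x,t_0)\leq C(n,m)\Bigl(\tfrac{1}{(t_0/2)^{m/2}}+\tfrac{1}{r^m}\Bigr) K.
\end{equation*}
Choosing $r\sim \sqrt{t_0}$ balances the two length scales and gives $|\grad^m\Rm|^2(x,t_0)\leq C'(n,m)\,\ep_0^4/t_0^{m+2}$; summing finitely many such bounds over $i=0,\dots,j$ produces \eqref{shi_est} with $\beta(k,n,\ep_0)=C''(k,n)\,\ep_0^4$, which manifestly tends to zero as $\ep_0\downto 0$ with $k,n$ fixed.

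The only point requiring genuine attention is verifying that the parabolic cylinder lies inside the region where \eqref{a} and \eqref{b} hold and is compactly contained in $M$; this is exactly the role of \eqref{distest} and \eqref{distincl}. Since $x \in B_{d_0}(x_0,10R)$ and $d_0$ is the uniform limit of $d_t$ as in \eqref{definitiond_0}, a passage to the limit $r\downto 0$ in \eqref{distest} yields $d_{t_0/2}(x,x_0)\leq e^{t_0/2}\cdot 10R\leq 20R$ as soon as $t_0\leq \log 2$. For any $y\in B_{g(t_0/2)}(x,\sqrt{t_0})$ the triangle inequality then gives $d_{t_0/2}(y,x_0)\leq 20R+\sqrt{t_0}\leq 21R$ provided $t_0\leq R^2$. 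Applying \eqref{distest} once more, for every $s\in[t_0/2,t_0]$ we obtain $d_s(y,x_0)\leq e^{s-t_0/2}d_{t_0/2}(y,x_0)\leq 42R<50R$. Hence the cylinder $B_{g(t_0/2)}(x,\sqrt{t_0})\times[t_0/2,t_0]$ sits inside $B_{g(s)}(x_0,50R)\Subset M$ at each $s$ in the interval, so \eqref{a} and \eqref{distincl} together supply both the uniform curvature bound $|\Rm|\leq K=2\ep_0^2/t_0$ and the compact containment needed to run Shi.

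With the cylinder secured, I would invoke the local version of Shi's derivative estimate on $B_{g(t_0/2)}(x,\sqrt{t_0})\times[t_0/2,t_0]$, shifting the time origin to $t_0/2$ and plugging in $K=2\ep_0^2/t_0$ and $r=\sqrt{t_0}$. The resulting bound $|\grad^m \Rm|(x,t_0)\leq C(n,m)\,\ep_0^2\, t_0^{-(m+2)/2}$ follows immediately, and squaring together with the finite summation absorbs everything into a constant $\beta(k,n,\ep_0)=C(k,n)\,\ep_0^4$ with the advertised decay in $\ep_0$. The restriction $t_0<\min\bigl((1+\ep_0^2)^{-1}R^2,\log 2,T\bigr)$ is exactly what is needed for the two sanity checks above (so that $\sqrt{t_0}\leq R$ and $e^{t_0}\leq 2$), and it is the same range in which the limit metric $d_0$ controls the geometry.

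The main obstacle is therefore purely book-keeping with \eqref{distest}: no new geometric input is needed once a parabolic cylinder of the right scale is known to sit inside the region where the curvature bound \eqref{a} applies and where compact containment \eqref{distincl} holds. The explicit dependence $\beta\sim\ep_0^4$ is automatic from the fact that $K^2\sim \ep_0^4/t_0^2$, and it is this scaling, rather than any delicate cancellation, that yields $\beta(k,n,\ep_0)\to 0$ as $\ep_0\to 0$.
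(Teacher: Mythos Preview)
Your argument is correct and is essentially the same as the paper's: both carve out a parabolic neighbourhood of scale $\sqrt{t_0}$ around $(x,t_0)$, use the distance estimates \eqref{distest} to verify this neighbourhood sits inside the region where \eqref{a} and \eqref{distincl} apply, and then invoke Shi's local derivative estimate with curvature bound $K=2\ep_0^2/t_0$. The paper phrases the scale-matching via the parabolic rescaling $\tilde g(\cdot,\tilde t)=t_0^{-1}g(\cdot,t_0\tilde t)$ (so that $t_0$ becomes time~$1$ and $r=\sqrt{t_0}$ becomes~$1$), while you work directly in the unscaled picture; the two are formally identical, and your version is in fact more explicit about the inclusion verification using \eqref{distest}.
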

\begin{proof}
We scale the solution $\ti g(\cdot,\ti t):= t^{-1}g(\cdot,\ti t t)$, so that time $t$ in the original solution scales to time $\ti t$ equal to $1$ in the new one. We now have   $|\widetilde{\Rm}(\cdot,s)|\leq 2\ep^2_0$ on $B_{\ti g(1/2)}(x,1)$ for all $s \in [1/2, 2],$ for all $x \in B_{d_0}(x_0,10R),$  in view of \eqref{a} and the (scaled) distance estimates \eqref{distest}.
The estimates of Shi, see for example \cite[Theorem 6.5]{CLN},
 give us that $\sum_{i=0}^k |\grad^i  \Riem(x,1)|^2 \leq \be(k,n,\ep_0)$  where $\be(k,n,\ep_0) \to 0$ as $\ep_0 \to 0$, as claimed. 
\end{proof}
\medskip
In the main theorem of this chapter, Theorem \ref{flowthm}, we consider distance maps $F_t$ at time $t$,  points $m_0\in B_{d_0}(x_0,15R)$ and $\ep_0< \frac 1 2$ such that $F_t : B_{d_0}(m_0,10) \to \R^n$  satisfies 
\begin{align}
  &|F_t(x)-F_t(y)| \in \Big( (1-\ep_0)d_{t}(x,y) -\ep_0 \sqrt{t}, (1+\ep_0)d_{t}(x,y) +\ep_0 \sqrt{t} \Big)\tag{c} \label{c}
  \end{align}
  for all $x,y \in  B_{d_0}(m_0,10)$.
  If such a map $F_t$ exists for all $t\in (0,T)$, and we further assume that
  $\sup_{t\in (0,T)} |F_t(x_0)| < \infty$, then for any sequence $t_i>0$, $t_i \to 0$ with $i\to \infty,$ we can, after taking a subsequence, find a limiting map, $F_0$ which is the $C^0$ limit of $F_{t_i},$
   $\sup_{x\in B_{d_0}(m_0,10)}  |F_0(x)-F_{t_i}(x) |\to 0,$ as $i\to \infty$  which satisfies 
  \begin{equation}
(1-\varepsilon_0)  \leq \frac{|F_{0}(x) -F_{0}(y)|}{d_0(x,y)} \leq (1+\varepsilon_0) \label{bilipproperty}
\end{equation}
on $ B_{d_0}(x_0,10)$.
Indeed, we first define $F_0$ on a dense, countable subset $ D \subset B_{d_0}(x_0,10)$ using a diagonal subsequence and the theorem of Heine-Borel, and then we extend $F_0$ uniquely, continuously to all of $B_{d_0} (x_0,10)$, which is possible in view of the fact that  the Bi-Lipschitz property \eqref{bilipproperty} is satisfied on $D$. The sequence $(F_{t_i})_{i}$ converges uniformly to $F_0$ in view of \eqref{distest}, \eqref{c} and \eqref{bilipproperty}.

Thus $F_0 $ is a $(1+\varepsilon_0)$ Bi-Lipschitz map between the metric spaces 
 $ (B_{d_0}(x_0,10),d_0)$ and $( F_0(B_{d_0}(x_0,10) ),\de)$. This is equivalent  to 
  $H_0 (\cdot) = F_0(\cdot) -F_0(x_0) $ being a $1+\varepsilon_0$ Bi-Lipschitz map between the metric spaces 
 $ (B_{d_0}(x_0,10),d_0)$  and $H_0(B_{d_0}(x_0,10))$ where 
 $   \B_{5}(0)  \subseteq  H_0(B_{d_0}(x_0,10)) \subseteq \B_{20}(0).$

 In  Theorem \ref{flowthm}, we see that if we consider a   Ricci-harmonic map heat flow of one of the functions $F_t$, and we assume that the solution $Z$ satisfies a gradient bound, $|\grad^{g(s)} Z(\cdot,s)|_{g(s)} \leq c_1$, for $s \in [t,T]$ on some ball, then after flowing for a time $t$, the resulting map will be a $1+ \al_0$   Bi-Lipschitz map on a smaller ball,  if $\ep_0 $ is less than a constant $ \hat \ep_0(n,\al_0,c_1)>0$, and $t\leq  \min(S(n,\al_0,c_1),T)$.  This property continues to hold if we flow for a time $s$ where $t \leq s \leq  \min(S(n,\al_0,c_1),T)$.

For convenience, we introduce the following notation:

\begin{defn} 
Let $(W,d)$ be a metric space. We call  $F:(W,d) \to \R^n$ an $\ep_0$ almost isometry if
\begin{equation*} 
(1-\varepsilon_0)d(x,y)  -\ep_0  \leq  |F(x) -F(y)| \leq (1+\varepsilon_0)d(x,y) +\ep_0
\end{equation*}
for all $x,y \in W$.
$F:(W,d) \to \R^n$ is a  $1+\ep_0$ Bi-Lipschitz map, if
 \begin{equation*} 
(1-\varepsilon_0)d(x,y)  \leq  |F(x) -F(y)| \leq (1+\varepsilon_0)d(x,y) 
\end{equation*}
for all $x,y \in W$.
\end{defn}
In the main applications in this chapter  of Theorem \ref{flowthm} and Theorem \ref{RicciDeTurck}, we will assume:
\begin{align*}
& \mbox{ there are  points } a_1, \ldots, a_n \in B_{d_0}(x_0,R), \mbox{  such that 
the map } \cr
& F_0: \, \left\{
   \begin{array}{rcl}
       B_{d_0}(x_0,R)  & \rightarrow &\R^n \\
       x& \rightarrow & (d_0(a_1,x), \ldots, d_0(a_n,x)):= ((F_0)_1(x), \ldots, (F_0)_n(x))
   \end{array}\right. \tag{$\hat{\rm c}$} \label{chat} \\
&  \mbox{ is a }   (1+\ep_0)  \mbox{ Bi-Lipschitz homeomorphism on }  B_{d_0}(x_0,100).  \\[-2ex]
\end{align*}
The components of the map $F_0$  are   referred to as {\it distance coordinates}. 
As a consequence of this assumption and the distance estimates \eqref{distest}, we see that  the corresponding distance coordinates at time $t$, $F_t:   B_{d_0}(x_0,50) \to \R^n $, given by $F_t(\cdot):= (d_t(a_1,\cdot), \ldots, d_t(a_n,\cdot))$,  are mappings  satisfying property \eqref{c}, for all $t\in (0,T)$.
That is: in the main application, we begin with a $1+\ep_0$ Bi-Lipschitz map $F_0$ and find, as a first step, maps $F_t$ which are  $\ep_0$ almost isometries for all $t\in (0,T)$.
\smallskip

\begin{rmk}
R.~Hochard also looked independently at some related objects in his PhD-thesis, and some of the infinitesimal results he obtained there are similar to those of this section, cf.~\cite[Theorem II.3.10]{HochardThesis}, as we explained in the introduction. Hochard considers points  $x_0$ which are so called {\it $(m,\ep)$ explosions at all scales less then $R$}  (only $m=n$ is relevant in this discussion). The condition, for $m=n$,  says that there exist points $p_1, \ldots, p_n$ such that for all $x$ in the ball $B_{d_0}(x_0,R)$ and all $r<R$,
 there exists an $\ep r$  GH approximation $\psi: B_{d_0}(x,r) \to \R^n$  such that the components $\psi^i$  are  each close to the components of distance coordinates $d(\cdot,p_i) - d(x,p_i)$ at the scale $r$, in the sense that $|\psi^i(\cdot)- (d(\cdot,p_i) - d(x,p_i) )| \leq \ep r$ on $B_{d_0}(x,r)$. 
  Our approach and our main conclusion differ slightly to the approach and main conclusions  of Hochard. The condition  \eqref{c}  we consider above looks 
 at the closeness of the maps  $F_t$ to being a Bi-Lipschitz homeomorphism, and  
 this closeness is measured at time $t$ using the maps $F_t$, and our main conclusion, is that the map will be a $1 + \al_0$ Bi-Lipschitz homeomorphism after flowing for an appropriate time by Ricci-harmonic map heat flow, if $t>0$ is small enough.
  We make the assumption on the evolving curvature, that  it is  close to that  of $\R^n$, after scaling in time appropriately. 
  Nevertheless, the proof of Theorem \ref{flowthm} below and of  \cite[Theorem II.3.10]{HochardThesis}   have a number of similarities, as do some of the concepts.
\end{rmk}

\noindent {\bf Outline of the section.} The main application of this section is, assuming \eqref{a}, \eqref{b} and that $F_0$ are distance coordinates which define a $1+\ep_0$ Bi-Lipschitz homeomorphism, to show that it is possible to define a Ricci-DeTurck flow $(\ti g(s))_{s\in (0,T]}$ starting from the metric $\ti d_0  := (F_0)_* d_0$,  on some Euclidean ball, which is obtained by pushing forward the solution $(g(s))_{s\in (0,T]}$ by diffeomorphisms. The sense in which this is  to be understood, respectively this is true, will be explained in more detail in the next paragraph.
  
The strategy we adopt is as follows. Assuming \eqref{a}, \eqref{b}, we consider the distance maps  $F_{t_i}$ at time $t_i$ defined above for a sequence of times $t_i>0$ with $t_i \to 0.$
We  mollify each $F_{t_i}$ at an appropriately  small  scale, so that they become  smooth, but so that the essential property, \eqref{c},  of the $F_{t_i}$ is not lost (at least up to a factor $2$). 
Then we flow each of the $F_{t_i}$ on $B_{d_0}(x_0,100)$ by Ricci-harmonic map heat flow, keeping the boundary values fixed. The existence of the solutions $Z_{t_i}:B_{d_0}(x_0,100) \times [t_i,T) \to \R^n$, is   guaranteed by Theorem  \ref{thm-grad-est}.
According to Theorem 
\ref{flowthm}, the $Z_{t_i}(s)$  are then $1+\al_0$ Bi-Lipschitz maps  (on a smaller ball), for all $s\in [2t_i,S(n,\ep_0)] \cap (0,T/2),$
if $\ep_0 = \ep_0(\al_0,n)$ is small enough. 
If we take the push forward of $g(s)$ with respect to $Z_{t_i}(s),$  $s\in [2t_i,S(n,\ep_0)] \cap (0,T/2),$ (on a smaller ball), then after taking a limit of a subsequence in $i$, we obtain a solution $\ti g(s)_{s \in (0,S(n,\ep_0)) \cap (0,T/2)}$ to the $\de$-Ricci-DeTurck flow such that 
$(1-\al_0)\de \leq \ti g(s) \leq (1+\al_0)\de $ for all $s \in(0,S(n,\ep_0)) \cap (0,T/2)$  in view of the  estimates  of Theorem \ref{flowthm}.
The solution then satisfies $d(\ti g(t)) \to \ti d_0 := (F_0)_* (d_0)$  as $t\to 0$ and hence may be thought of as a solution to Ricci-DeTurck flow coming out of $\ti d_0$.
This is explained in Theorem \ref{RicciDeTurck}.

In the next  section we examine the  regularity properties of this solution, which depend on the regularity properties of  $\ti d_0$.

\subsection{Almost isometries and Ricci-harmonic map heat flow} In this subsection we provide some technical lemmas giving insight into the evolution of almost isometries under Ricci-harmonic map heat flow. These results will be needed in the following subsection.

\begin{lemma}\label{almostisomlem}
For all $\si$, there exists an $0<\ga(\si)\leq \si$ small with the following property: 
if $ L: \B_{{\ga}^{-1}}(0 ) \to \R^n$ is a  $\ga$ almost isometry fixing $0$,  then  
there exists an  $S \in O(n)$ such that
$|L-S|_{L^{\infty}(\B_{\si^{-1} }(0)) } \leq \si$. 
\end{lemma}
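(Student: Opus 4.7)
The plan is a standard contradiction and compactness argument. Assume the conclusion fails. Then there exist $\sigma>0$, a sequence $\gamma_k\searrow 0$, and maps $L_k:\B_{\gamma_k^{-1}}(0)\to\R^n$, each a $\gamma_k$ almost isometry fixing $0$, such that for every $k$ and every $S\in O(n)$,
\begin{equation*}
\|L_k - S\|_{L^\infty(\B_{\sigma^{-1}}(0))} > \sigma.
\end{equation*}
The goal is to derive a contradiction by producing a subsequential limit which is an actual isometry, hence an element of $O(n)$, approximated by $L_k$.

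First I would exploit the definition to get equicontinuity and uniform boundedness on any fixed ball. Since $L_k(0)=0$ and $L_k$ is a $\gamma_k$ almost isometry, for $x\in\B_{R}(0)$ with $R$ fixed and $k$ large enough that $\gamma_k^{-1}>R$, we have $|L_k(x)|\leq (1+\gamma_k)|x|+\gamma_k\leq 2R$, and
\begin{equation*}
|L_k(x)-L_k(y)|\leq (1+\gamma_k)|x-y|+\gamma_k,
\end{equation*}
so the family $\{L_k\}$ is uniformly bounded and equicontinuous on each $\overline{\B_R(0)}$. A standard Arzel\`a--Ascoli plus diagonal argument then produces a subsequence (still denoted $L_k$) and a map $L_\infty:\R^n\to\R^n$ such that $L_k\to L_\infty$ locally uniformly on $\R^n$.

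Next I would verify that $L_\infty$ is a global isometry of $\R^n$ fixing the origin. Passing to the limit in
\begin{equation*}
(1-\gamma_k)|x-y|-\gamma_k \le |L_k(x)-L_k(y)| \le (1+\gamma_k)|x-y|+\gamma_k
\end{equation*}
gives $|L_\infty(x)-L_\infty(y)|=|x-y|$ for all $x,y\in\R^n$, while $L_\infty(0)=\lim L_k(0)=0$. By the classical rigidity of Euclidean isometries (for instance Mazur--Ulam, or by noting that an isometry of $\R^n$ fixing $0$ preserves inner products via the polarization identity $\langle L_\infty x,L_\infty y\rangle=\tfrac12(|L_\infty x|^2+|L_\infty y|^2-|L_\infty x-L_\infty y|^2)$ and is therefore linear orthogonal), we conclude $L_\infty=:S\in O(n)$. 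Then $L_k\to S$ uniformly on $\overline{\B_{\sigma^{-1}}(0)}$, so $\|L_k-S\|_{L^\infty(\B_{\sigma^{-1}}(0))}<\sigma$ for $k$ large, contradicting our standing assumption.

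The argument contains no real obstacle; the only delicate point is the diagonal extraction, since each $L_k$ is defined only on $\B_{\gamma_k^{-1}}(0)$, but this is harmless because $\gamma_k^{-1}\to\infty$, so for any fixed $R$ the full tail of the sequence is defined on $\B_R(0)$. Finally, by choosing $\gamma(\sigma):=\gamma_{k(\sigma)}$ for some $k(\sigma)$ large enough (and replacing the chosen value by $\min(\gamma_{k(\sigma)},\sigma)$ to enforce $\gamma(\sigma)\leq\sigma$), we obtain the claimed constant.
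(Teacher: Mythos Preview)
Your approach is the same contradiction-and-compactness argument as the paper's. One technical point deserves care: a $\gamma_k$ almost isometry satisfies only $|L_k(x)-L_k(y)|\leq (1+\gamma_k)|x-y|+\gamma_k$, so the individual $L_k$ need not be continuous, and the family is not equicontinuous in the strict sense (for $\varepsilon<\gamma_k$ no $\delta$ works). Thus Arzel\`a--Ascoli does not apply directly. The paper sidesteps this by first extracting a pointwise limit on a countable dense subset $D\subset \B_N(0)$ via Bolzano--Weierstra{\ss} and a diagonal argument (which only needs uniform boundedness), observing that the limit is distance-preserving on $D$, extending it uniquely by uniform continuity to an isometry on $\B_N(0)$, and then using the almost-isometry bounds to upgrade pointwise convergence on $D$ to uniform convergence on $\B_N(0)$. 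Your argument is easily repaired the same way; otherwise it is complete, and in fact you are more explicit than the paper in identifying the limiting isometry fixing $0$ with an element of $O(n)$ via polarization.
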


\begin{proof}
If not, then for some $\si>0$, we have a sequence of maps 
$L_i: \B_{i^{-1}}(0 ) \to \R^n,$ $i\in \N$ such that $L_i$ is an almost  $i^{-1}$ isometry fixing $0$, but
$L_i$ is not $\si $ close in  the $L^{\infty}$ sense to any element $S \in O(n)$ on 
$\B_{\si^{-1} }(0)$. 
Let $N:=2\si^{-1}$ and $D \subseteq \B_{N}(0)$ be a dense subset of $\B_{N}(0)$.
By taking a diagonal subsequence and  using that $|L_i|_{L^{\infty}(\B_{N}(0))}\leq N+1$ in conjunction with the Theorem of Bolzano-Weierstra{\ss}, we obtain a map $L:D \to \R^n$ satisfying $L(x):= \lim_{i\to \infty} L_i(x)$ for all $x \in D$. $L:D\to \R^n$ satisfies $|L(x)-L(y)| = |x-y|$ for all $x,y \in D,$ and hence may
be continuously extended to a map $L:B_N(0) \to \R^n$ which is an isometry 
$|L(x)-L(y)| = |x-y| $ for all $x,y \in B_N(0)$. Using the facts that the $L_i's$ are almost isometries,  $L_i \to L$ poinwise on $D$, and $L$ is an isometry,  we see that in fact $L_i \to L$ uniformly on $B_{N}(0)$ for $i\to \infty,$  
which contradicts the fact that $L_i$  is not $\si$ close to any element $S \in O(n)$ on $B_{\si^{-1}}(0)$.
\end{proof}

%{\color{blue} ([F]: I think we don't need the assumption on the gradient in the previous Lemma. We should get a %limit which is an isometry and uniform convergence on compact subset as we described in the previous section)}
%\smallskip

 \begin{lemma}\label{BasicLemma}
 For all $c_1, \al_0>0$, $n \in \N$  there exists $0 < \al(c_1,n,\al_0) \leq \al_0 $ such that the following is true.
 Let $Z:\B_{\al^{-1}}(0) \times [0,1] \to \R^n$ be a smooth solution to the harmonic map heat flow with an evolving background metric, 
\begin{eqnarray*}
\partt Z(\cdot,t) = \lap_{h(t)} Z(\cdot,t) ,
\end{eqnarray*}
 where $h(\cdot,t)_{t\in [0,1]}$ is smooth, and $Z_0(0) =0$, where $Z_0(\cdot):= Z(\cdot,0)$. We  assume  that $Z_0$ is an  $\al$ almost isometry with respect to $h(0)$, and further  that:
 \begin{gather*}
  |h_{ij} -\de_{ij}|_{C^2(\B_{\al^{-1}}(0) \times [0,1]) }  \leq \al \\
 |\grad^{h(t)} Z(\cdot,t)|_{h(t)} \leq c_1 \ \mbox{ on } \B_{\alpha^{-1}}(0) \\
  |\grad^{h(t),2} Z(\cdot,t)|_{h(t)} \leq \frac{c_1}{\sqrt{t}} \  \mbox{ on } \B_{\alpha^{-1}}(0)
\end{gather*}
for all $t\in [0,1]$. Then 
\begin{equation}\label{BasicLemmaIneq}
\begin{split}
|dZ(\cdot,s)(v)| &\in (1-\al_0, 1+\al_0)|v|_{h(s)} \\
|Z(x,s) -Z(y,s) | &\in \big( (1-\al_0)d(h(s))(x,y), (1+\al_0)d(h(s))(x,y) \big)\\
|Z_0(x) -Z(x,t)| &\leq \al_0\, ,
\end{split}
\end{equation}
 for all $s \in [1/2,1]$, for all $t\in [0,1],$ for all $x,y \in \B_{\alpha_0^{-1}  }(0)$ and $v \in T_x \B_{\al_0^{-1}}(0)$.
\end{lemma}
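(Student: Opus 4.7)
The plan is a contradiction and compactness argument, in the spirit of Lemma \ref{almostisomlem}. Suppose the claim fails: for some fixed $c_1, \al_0, n$ there exist sequences $\al_i \downto 0$, smooth evolving metrics $h^{(i)}$ and solutions $Z^{(i)}: \B_{\al_i^{-1}}(0) \times [0,1] \to \R^n$ satisfying all the listed hypotheses with $\al$ replaced by $\al_i$, together with data $(x_i, y_i, s_i, t_i, v_i)$, $|v_i| = 1$, $s_i \in [1/2,1]$, $t_i \in [0,1]$, $x_i, y_i \in \B_{\al_0^{-1}}(0)$, at which at least one of the three inequalities in \eqref{BasicLemmaIneq} fails. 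Passing to a subsequence I may assume the same one among the three inequalities is violated for every $i$, and that the data converges in its compact parameter space.

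I would then produce a limit map $Z^{\infty}: \R^n \times [0,1] \to \R^n$. The bound $|\grad Z^{(i)}|_{h^{(i)}} \leq c_1$ gives uniform Lipschitz control in space, and the relation $|\partial_t Z^{(i)}| = |\lap_{h^{(i)}} Z^{(i)}| \leq c(n) c_1/\sqrt{t}$ integrates to uniform H\"older-$1/2$ control in time, so Arzel\`a--Ascoli produces $C^0_{\mathrm{loc}}$-convergence on $\R^n \times [0,1]$ along a subsequence. For any $\tau > 0$ the equation $\partial_t Z = \lap_{h^{(i)}(t)} Z$ is a linear parabolic system whose coefficients are uniformly bounded in $C^2$ and tend to those of $\lap$, so parabolic Schauder interior estimates (or Shi-type derivative bounds in the spirit of Theorem \ref{reg_harm}) upgrade the convergence to $C^k_{\mathrm{loc}}$ on $[\tau,1] \times \R^n$ for every $k$.

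To identify $Z^{\infty}$, I would apply Lemma \ref{almostisomlem} with $\si_i \downto 0$ chosen so that $\ga(\si_i) \geq \al_i$, using that the $C^2$-closeness of $h^{(i)}(0)$ to $\de$ makes $Z^{(i)}_0$ into a Euclidean almost-isometry on any fixed ball for $i$ large. This provides $S_i \in O(n)$ with $|Z^{(i)}_0 - S_i|_{L^{\infty}(\B_{\si_i^{-1}}(0))} \leq \si_i$; compactness of $O(n)$ lets me further extract $S_i \to S^{\infty} \in O(n)$, so $Z^{\infty}(\cdot,0) \equiv S^{\infty}$ on all of $\R^n$. Since $h^{(i)} \to \de$ in $C^2$, the limit $Z^{\infty}$ solves $\partial_t Z^{\infty} = \lap Z^{\infty}$ on $\R^n \times (0,1]$ with continuous initial datum $S^{\infty}$ and $|\grad Z^{\infty}|_{\de} \leq c_1$. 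Because the affine map $S^{\infty}$ is itself harmonic, uniqueness of Lipschitz solutions of the Euclidean heat equation forces $Z^{\infty} \equiv S^{\infty}$ on all of $\R^n \times [0,1]$.

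The contradiction then follows at once: $dZ^{\infty}(x,s) = S^{\infty} \in O(n)$, so $|dZ^{\infty}(v)|_{\de} = |v|_{\de}$, $|Z^{\infty}(x,s) - Z^{\infty}(y,s)| = |x-y|$, and $Z^{\infty}(\cdot,t) \equiv Z^{\infty}_0$. Using $C^1_{\mathrm{loc}}$-convergence on $[1/2,1] \times \R^n$, $C^0_{\mathrm{loc}}$-convergence on $[0,1] \times \R^n$, and $h^{(i)}(s_i) \to \de$, each of the three potentially failing inequalities passes to a strict equality along the chosen subsequence, contradicting the assumed failure. I expect the main technical step to be establishing the $C^1_{\mathrm{loc}}$-convergence at positive times: this is exactly where the hypothesis $|\grad^2 Z| \leq c_1/\sqrt{t}$ combined with the uniform $C^2$ bound on $h^{(i)}$ enters, through interior parabolic regularity for the linear equation governing each component of $Z^{(i)}$.
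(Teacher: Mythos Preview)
Your compactness–contradiction argument is correct, and it does yield the lemma. The paper, however, argues directly: it first invokes Lemma \ref{almostisomlem} to find $S\in O(n)$ with $|Z_0 - S|$ small on a large ball, then runs the maximum principle on $\eta|Z-S|^2$ (with $\eta$ a spatial cutoff) to propagate the $C^0$-smallness of $Z-S$ to all $t\in[0,1]$, using that $|\lap_{h(t)} S|\le c(n)\al$. Finally it interpolates between this $C^0$ bound and the a~priori $C^2$ bound on $Z-S$ to obtain $|D(Z-S)|\lesssim \be^{1/4}$ on $[1/2,1]$, from which all three conclusions follow.

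The two routes differ in what they buy. The paper's argument is constructive and quantitative: one can in principle track how $\al$ depends on $c_1,n,\al_0$, and the maximum-principle step is entirely elementary. Your approach is softer and conceptually cleaner---it reduces everything to the rigidity statement that a Lipschitz solution of the Euclidean heat equation with linear initial data is linear---but it gives no effective control on $\al$. One small point worth making explicit in your write-up: for the bi-Lipschitz conclusion, the case $|x_i - y_i|\to 0$ does not pass to the limit as a strict inequality on its own; you should note that once $DZ^{(i)}(\cdot,s_i)\to S^\infty$ uniformly on $\overline{\B_{\al_0^{-1}}(0)}$, integrating along segments gives the bi-Lipschitz estimate for \emph{all} pairs $x,y$ simultaneously, which is what actually contradicts the assumed failure.
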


\begin{proof}
We omit the dependence of the Levi-Civita connections on the metrics $h(t)$, $t\in[0,1]$.
From Lemma \ref{almostisomlem}, we know that there exists an $S \in O(n)$ such that 
\begin{equation*} 
|Z(\cdot,0) -S(\cdot)|_{C^0(\B_{\be^{-1}}(0) )}  \leq \be
\end{equation*}
where $0<\al \leq \be = \be(n,c_1,\al)$, but still $\be(n,c_1,\al) \to 0$ as $\al \to 0$.

We also know that  $|\partt Z(x,t)| = |\lap_{h(t)} Z(x,t)|  \leq c_1/\sqrt{t}$ and 
hence
$|Z(x,t) -Z(x,0)|\leq 2c_1 $ 
 for all $t \in [0,1]$  for all $x \in \B_{\be^{-1}}(0) $ which implies
$$|Z(x,t) -S(x)| \leq |Z(x,t) - Z(x,0)| + |Z(x,0) -S(x)| \leq 3 c_1$$
 (w.l.o.g.~$\be \leq c_1$) for all $t \in [0,1]$  for all $x \in \B_{\be^{-1}}(0) $.

Let $\eta:\B_{\be^{-1}}(0) \to [0,1] $ be a smooth cut off function such that $\eta(\cdot) = 1$ on $\B_{\be^{-1}/2}(0)$, $\eta(\cdot) = 0$ on $\R^n \backslash \B_{\beta^{-1}}(0),$  
$|D^2\eta| + (|D \eta |^2/\eta )\leq c(n)\be^2$, $|D\eta|\leq c(n)\be$.
Due to the fact that $h$ is $\al$ close to $\de$ in the $C^2$ norm, we see 
$$|\lap_{h}(S)(x,t)| = |h^{ij}(x,t)( \partial_i \partial_j S(x) - \Gamma(h)_{ij}^k(x,t) \partial_k S )| \leq c(n) \al\, .$$
Hence, we have 
\begin{equation*}
 \partt (Z-S)(\cdot,t)  = \lap_{h(t)} (Z-S)(\cdot,t) + E,
 \end{equation*}
 where $|E|  \leq  c(n) \al$.
But then, using the  cut off function $\eta$ on $\B_{\be^{-1}}(0)$, we get
\begin{equation*}
\begin{split}
\partt (|Z-S|^2 \eta)
 &\leq  \lap_{h(t)}(|Z-S)|^2 \eta)  -2\eta  | \grad (Z -S)|^2  - |Z-S|^2 \lap_{h(t)} \eta\\
& \quad + c(n)\alpha  |Z- S| -2h(t)(\grad |Z-S|^2, \grad \eta ) \\
&\leq   \ \lap_{h(t)}(|Z-S)|^2 \eta)+ c(n) \beta |Z-S|^2 - \eta | \grad (Z -S)|^2  
+c(n)c_1 \al \\ 
&\leq  \ \lap_{h(t)}(|Z-S|^2 \eta) + c(n, c_1) \be\, .
\label{maxprinciple}
\end{split}
\end{equation*}
Hence, by the maximum principle, $|Z-S|^2(t) \leq \al^2 + c(n, c_1)\be \leq c(n, c_1) \be $ for all $t\in[0,1]$ on
$\B_{\be^{-1}/2}(0)$.

Since $|\grad (Z -S)|^2 + |\grad^2 ( Z -S )|^2 \leq c(n) c_1$  for $s \in [1/2,1]$  on
$\B_{\be^{-1}/2}(0)$ we can use interpolation inequalities as in \cite[Lemma B.1]{ChodoshSchulze} to deduce that on $\B_{\be^{-1}/4}(0)$
$$ |D (Z-S)|_\de \leq c(n, c_1) \be^\frac{1}{4}\, .$$ 
Again, since $h(t)$ is $\al$-close to $\de$ this implies that for $\al$ sufficiently small 
\begin{equation*}
|d Z(\cdot,s) (v) |\in (1- \al_0, 1+ \al_0)
\end{equation*}
for all $v \in T_y\R^n$ of length one with respect to $h(s)$  for $s \in [1/2,1]$ and $y \in \B_{\be^{-1}/4}(0)$.
We also see that
$$|Z(x,s) - Z(y,s)| = |DZ(p,s)(v_p)||x-y| \in ( d(h(s))(x,y) (1-\al_0), d(h(s))(x,y) (1+\al_0) )$$
where by the mean value theorem  $p $ is some  point on the unit speed line between $x$ and $y$, and $v_p$ is a vector of length one with respect to $\de$ and 
$$ |Z(x,t)-Z_0(x)| \leq |Z(x,t) -S| + |Z_0(x)-S| \leq c(n, c_1) \beta^\frac{1}{2} \leq \alpha_0 $$
for all $x,y \in \B_{\be^{-1}/4}(0)$, for all $s\in [1/2,1]$, $t\in [0,1]$ for $\al$ sufficiently small.
\end{proof}

\smallskip

\begin{lemma}\label{DGLemma}
For all  $n,k \in \N$, $L >0$ there exists an $\ep_0 = \ep_0(n,k,L)>0$ such that the following holds.
Let $M^n$ be a connected smooth manifold, and $g$ and $h$ be smooth Riemannian metrics on $M$  with
 $B_{h}(y_0,L) \Subset M$ and $|d_h-d_g|_{C^0(B_{h}(y_0,L))} \leq \ep_0$. 
Assume further that 
$$\sup_{B_{h}(y_0,L)} \big( |\Riem(g)|_{g} + \ldots + | \grad^{g,k+2} \Riem(g)|_{g} \big)\leq \ep_0$$ and that there exists a map  $F: B_{h}(y_0,L) \to \R^n$ which is an $\ep_0$ almost isometry with respect to $h$,
 that is 
 \begin{equation*} 
(1-\varepsilon_0)d_h(z,y)  -\ep_0  \leq  |F(z) -F(y)| \leq (1+\varepsilon_0)d_h(z,y) +\ep_0,
\end{equation*}
 for all $z,y \in  B_{h}(y_0,L),$  and $F(y_0) = 0$. Then  $ (B_{h}(y_0,L/2),g)$ is $1/L$-close to
 the Euclidean ball $(\B_{L/2}(0),\de)$ in the $C^k$-Cheeger-Gromov sense.
 \end{lemma}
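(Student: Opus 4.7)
The plan is to argue by contradiction and compactness. Suppose the statement fails for some $n,k,L$: then there exist $\ep_i \downto 0$ and counterexamples $(M_i, g_i, h_i, y_i, F_i)$ satisfying the hypotheses with $\ep_i$ in place of $\ep_0$, but for which $(B_{h_i}(y_i, L/2), g_i)$ is not $1/L$-close to $(\B_{L/2}(0), \de)$ in the $C^k$-Cheeger-Gromov sense. Our goal is to extract a Cheeger-Gromov limit of the $g_i$ and show that it is forced to be a Euclidean ball, contradicting the choice of sequence.

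The first step is to transfer the almost-isometry hypothesis from $h_i$ to $g_i$. Since $|d_{h_i} - d_{g_i}|_{C^0(B_{h_i}(y_i, L))} \leq \ep_i$, the map $F_i$ is a $2\ep_i$ almost isometry of $(B_{g_i}(y_i, L - \ep_i), d_{g_i})$ into $\R^n$ sending $y_i$ to $0$. Combined with this, for $r \lesssim L$ and any $x \in B_{g_i}(y_i, 3L/4)$, the set $F_i(B_{g_i}(x,r))$ almost contains the Euclidean ball $\B_{r(1-\ep_i) - \ep_i}(F_i(x))$, which gives a uniform lower bound on the Euclidean volume of $F_i(B_{g_i}(x,r))$; since $F_i$ is Lipschitz with constant $\leq 1+\ep_i$, this pulls back to a local non-collapsing estimate $\Vol_{g_i}(B_{g_i}(x,r)) \geq (1-C\ep_i)\omega_n r^n$.

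Combining this local volume non-collapsing with the uniform curvature bound $|\Riem(g_i)|_{g_i} \leq \ep_i$ on $B_{h_i}(y_i, L)$, Cheeger's injectivity radius estimate yields a uniform lower bound $\inj(g_i, x) \geq \iota_0(n,L) > 0$ for every $x \in B_{g_i}(y_i, 3L/4)$ and $i$ large. Together with the higher-order curvature bounds $\sum_{j=0}^{k+2} |\nabla^{g_i,j}\Riem(g_i)|_{g_i} \leq \ep_i$, the Cheeger-Gromov compactness theorem produces a subsequence converging in the pointed $C^{k+1}$-Cheeger-Gromov sense to a $C^{k+1}$ pointed Riemannian manifold $(N, \bar g, y_\infty)$; passing to the limit in the curvature estimate, $\bar g$ is flat. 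The maps $F_i$, composed with the Cheeger-Gromov diffeomorphisms, form a uniformly Lipschitz family fixing the basepoint, and so by Arzel\`a-Ascoli (along a further subsequence) converge uniformly to a limit map $F_\infty : B_{\bar g}(y_\infty, 3L/4) \to \R^n$ satisfying $|F_\infty(x) - F_\infty(z)| = d_{\bar g}(x, z)$. Thus $F_\infty$ is a distance-preserving map from a flat geodesic ball to Euclidean space whose image contains a ball of radius essentially $3L/4$ around $0$, and so restricts to an honest isometry from $(B_{\bar g}(y_\infty, L/2), \bar g)$ onto $(\B_{L/2}(0), \de)$. Composing the Cheeger-Gromov diffeomorphisms with $F_\infty$ produces, for all $i$ large enough, diffeomorphisms realising $C^k$-Cheeger-Gromov closeness between $(B_{g_i}(y_i, L/2), g_i)$ and $(\B_{L/2}(0), \de)$ to within any prescribed tolerance, in particular to within $1/L$; after adjusting for the $\ep_i$-discrepancy between $d_{h_i}$- and $d_{g_i}$-balls this gives the same conclusion for $(B_{h_i}(y_i,L/2),g_i)$, the desired contradiction.

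The principal obstacle I anticipate is turning the \emph{global} almost-isometry $F_i$ into \emph{local} volume non-collapsing at each point of $B_{g_i}(y_i, 3L/4)$, not only at $y_i$; this is essential for a pointwise Cheeger injectivity radius bound. This should be routine because $F_i$ is almost distance-preserving on the whole ball, but must be checked uniformly in $i$. A secondary care point is tracking which radii are available at each stage: the interplay between $d_{h_i}$-balls (where the hypotheses live), $d_{g_i}$-balls (where the compactness is extracted), and metric balls in the limit costs at most $O(\ep_i)$ at each step and is absorbed by shrinking the radius slightly, e.g.\ working initially on $B_{g_i}(y_i, 3L/4)$ and restricting the conclusion to $B_{h_i}(y_i, L/2)$.
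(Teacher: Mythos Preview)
Your overall strategy—contradiction plus compactness plus identifying the limit as Euclidean—is the same as the paper's, but the step where you obtain volume non-collapsing directly from the almost isometry is incorrect as written. Under the paper's definition, an $\ep_0$-almost isometry only satisfies the two-sided bound $(1-\ep_0)d-\ep_0\leq|F(x)-F(y)|\leq(1+\ep_0)d+\ep_0$; there is no continuity or surjectivity hypothesis. In particular, the additive $+\ep_0$ means $F_i$ is \emph{not} Lipschitz with constant $1+\ep_i$, and the image of $F_i$ need not ``almost contain'' any Euclidean ball: for instance, the map sending each cell of a mesh of side $\sim\ep_0/\sqrt{n}$ to its centre satisfies the almost-isometry bounds but has image of Lebesgue measure zero. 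So you cannot read off a lower volume bound, nor an injectivity radius bound, from $F_i$ in this way; the same problem recurs when you call the $F_i$ a ``uniformly Lipschitz family'' to invoke Arzel\`a--Ascoli.

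The paper circumvents this by reversing the order of the argument. One first uses only the \emph{upper} bound in the almost-isometry condition to get uniform total boundedness of the balls $(B_{g(i)}(y_0(i),\cdot),d_{g(i)})$ via a packing argument, and then applies Gromov's metric-space precompactness theorem to extract a Gromov--Hausdorff limit $(X,d)$. Composing the $F(i)$ with Gromov--Hausdorff approximations produces a pointwise limit $H:(X,d)\to\R^n$ which is a genuine isometry, identifying $(X,d)$ with a Euclidean ball. Only \emph{then} does one invoke volume convergence for Gromov--Hausdorff converging sequences with uniform curvature bounds to conclude that $\Vol_{g(i)}(B_{g(i)}(y_0(i),2L/3))\to\omega_n(2L/3)^n$, i.e.\ non-collapsing holds a posteriori. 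With non-collapsing and the curvature bounds in hand, Cheeger--Gromov $C^k$-convergence to the Euclidean ball follows and yields the contradiction. Your argument can be repaired by adopting this route; the remainder of your write-up (tracking the $d_{h_i}$ versus $d_{g_i}$ discrepancies, shrinking radii, etc.) is fine.
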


 \begin{proof}
Assume it is not the case. Then there is an $L>0$ for which the theorem fails.  Then we can find sequences $g(i),h(i)$,$M(i), F(i): B_{h(i)}(y_i,L) \to \R^n $ satisfying the above conditions with $\ep_0:= 1/ i$ but so that the conclusion of the theorem is not correct. Using the almost isometry, we see that for any $\ep>0$  we can cover $ B_{h(i)}(y_0(i),6L/7)$ by $N(\ep)$ balls (with respect to $h$) of radius $\ep$, for all $i$. Hence, using 
 $|d_{h(i)}-d_{g(i)}|_{C^0(B_{h}(y_0,L))} \leq 1/i$, we see that the same is true for $ B_{g(i)}(y_0(i),5L/6) \subseteq B_{h(i)}(y_0(i), 6L/7)$ with respect to $g(i)$: we can cover $ B_{g(i)}(y_0(i),5L/6 )$ by $N(\ep)$ balls (with respect to $g(i)$) of radius $\ep$, for all $i$.
Hence, due to the compactness theorem of Gromov (see for example \cite[Theorem 8.1.10]{BBi}),  there is a Gromov-Hausdorff  Limit 
$$(X,d) = \lim_{i \to \infty} (B_{g(i)}(y_0(i),4L/5),g(i)) = \lim_{i \to \infty} (B_{h(i)}(y_0(i),4L/5),h(i))\, .$$
In particular, there must exist 
$$G(i): B_{d}\Big(y_0,\frac{3L}{4}\Big) \to B_{h(i)}\Big(y_0(i),\frac{4L}{5}\Big)\, ,$$ 
which are $\ep(i)$ Gromov-Hausdorff approximations, where $\ep(i) \to 0$   as $i\to \infty$.
Using the maps $G(i)$ and the $1/i$ almost isometries $F(i)$, we see that there is a pointwise limit map, $H:= \lim_{i\to \infty} F(i) \circ G(i), $
$$H:\Big(B_{d}\Big(y_0,\frac{3L}{4}\Big), d\Big) \to \big(\B_{4L/5}(0) ,\de\big)\, ,$$ which is an isometry.  Hence the volume of
$ B_{g(i)}(y_0(i),2L/3)$ converges to $\omega_n (2L/3)^n$ (in particular the sequence  is non-collapsing) as $i\to \infty$, since volume is convergent for spaces of bounded curvature (which are for example Aleksandrov spaces and spaces with Ricci curvature bounded from below).
Hence $(B_{g(i)}(y_0(i), L/2 ),g(i))$ converges to   $(\B_{ L/2 }(0),\de) $ in the $C^k$ norm in the Cheeger-Gromov sense, which leads to a  contradiction if $i$ is large enough. \end{proof}

\subsection{Ricci-harmonic map heat flow of $(1+\ep_0)$-Bi-Lipschitz maps and  distance coordinates}
We begin with a regularity theorem for solutions to the Ricci-harmonic map heat flow, whose initial values are sufficiently close to a $1+\ep_0$ Bi-Lipschitz map.

\begin{thm}\label{flowthm}
For all  $\al_0 \in (0,1), n\in \N$ and $c_1 \in \R^+$, there exists $ \ep_0(n,c_1,\al_0)>0$ and $ S(n,c_1,\al_0)>0$ such that the following holds. 
Let $(M,g(t))_{t\in (0,T)}$ be a smooth solution to Ricci flow satisfying the conditions \eqref{a}, \eqref{distest} and \eqref{distincl} for some $R>100$, where  $d_0$ is the metric appearing in \eqref{definitiond_0}.
If $m_0 \in  B_{d_0}(x_0,15R)$ and $Z:B_{d_0}(m_0,10) \times [t,T) \to \R^n$ is a solution to the Ricci-harmonic map heat flow 
$$\parts Z = \lap_{g(s)} Z$$
for some $t\in [0,T)$ on $B_{d_0}(m_0,10)$ for all $s \in [t,T)$,  which satisfies $|\grad^{g(s)} Z|_{g(s)} \leq c_1$ on $B_{d_0}(m_0,10)$ for all $s \in[t,T)$, 
 and the initial values of $Z$ satisfy \eqref{c}, that is 
\begin{equation}
|Z(x,t) -Z(y,t)| \in ( (1-\ep_0)d_t(x,y) -\ep_0 \sqrt{t}, (1+\ep_0)d_t(x,y) +\ep_0 \sqrt{t}) \label{initprop}
\end{equation}
for all $x,y \in B_{d_0}(m_0,10),$ then
\begin{gather} 
 | \grad ^{g(s),2} Z(x,s)|^2_{g(s)} \leq \al_0 (s-t)^{-1}  \label{flow_second_gradient}  \\
|Z(x,s)-Z(x,t)| \leq c(c_1,n)\sqrt{s-t} \ \label{flow_c0time} \\
|dZ(x,s)(v) | \in 
(1-\al_0,1+ \al_0)  |v|_{g(s)} ,\label{flow_gradest}  \\
 |Z(x,s) -Z(y,s)| \in ((1-\al_0)d_{s}(x,y), (1+ \al_0)d_{s}(x,y))  \label{flow_bilip}  
\end{gather}
for all $x,y \in B_{d_0}(m_0,2)$, $v \in T_x B_{d_0}(m_0,2)$, $s \in [2t,S(n,c_1,\al_0)] \cap [0,T/2] $. 
Furthermore, the maps  $Z(s): B_{d_0}(m_0,3/2) \to D_s:= Z(s)(B_{d_0}(m_0,3/2)) \subseteq \R^n$ for $s \in [2t,S(n,c_1,\al_0)]\, \cap \,[0,T/2]$ are homeomorphisms and their image satisfies 
$$\B_{5/4}(Z(s)(m_0))  \subseteq D_s \subseteq \B_{2}(Z(s)(m_0)),$$ for $Z(s):= Z_s(\cdot)$.
\end{thm}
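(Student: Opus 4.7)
My plan is to argue in three stages. First I establish the Hessian decay \eqref{flow_second_gradient}; second I integrate it to obtain the $C^0$-time bound \eqref{flow_c0time}; and third I prove the Bi-Lipschitz conclusions \eqref{flow_gradest}, \eqref{flow_bilip} by a blow-up contradiction that feeds the rescaled solution into Lemma \ref{BasicLemma}. Throughout I shift to $\tau = s - t$, the elapsed harmonic map heat flow time, so that the almost-isometry datum \eqref{initprop} sits at $\tau = 0$. The rescaling in question is the parabolic one by $\la = (s-t)^{-1/2}$ about a candidate bad point $(x, s)$, which takes the original initial time $t$ to rescaled time $\tilde\tau = 0$ and the evaluation time $s$ to $\tilde\tau = 1$.

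For the Hessian bound, Theorem \ref{reg_harm} directly yields $\tau|\grad^{g(s),2} Z|^2_{g(s)} \leq c(n,c_1)$ on a slightly smaller ball for $\tau \leq S(n)$, provided $\ep_0 \leq \hat\ep_0(c_1,n)$. To sharpen $c(n,c_1)$ to the prescribed $\al_0$ I would argue by contradiction: sequences $\ep_i \searrow 0$ of failures are parabolically rescaled. By Shi's estimates (Lemma \ref{RicciFlowBasics}) the rescaled curvatures enjoy uniform higher-derivative bounds on any fixed parabolic region; combining this with the distance estimate \eqref{distest} and the almost-isometry hypothesis \eqref{initprop} (which provides an almost isometry with respect to the rescaled background), Lemma \ref{DGLemma} yields $C^k_{\mathrm{loc}}$-Cheeger--Gromov convergence of the rescaled backgrounds to $(\R^n,\de)$. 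The rescaled initial maps $\tilde Z_i(\cdot,0)$ then converge, after subtracting off their value at the base point, to a genuine isometry $S\in O(n)$. The rescaled $\tilde Z_i$ subsequently subconverge to the Euclidean solution $Z_\infty(\cdot,\tilde\tau)\equiv S$, whose Hessian vanishes identically, contradicting the assumed lower bound. Stage 2 is then immediate: integrating $|\partial_s Z|_{g(s)} \leq \sqrt{n}\,|\grad^{g(s),2} Z|_{g(s)} \leq \sqrt{n\al_0/(s-t)}$ from $t$ to $s$ gives \eqref{flow_c0time}.

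The Bi-Lipschitz step is the main obstacle and I approach it by a parallel blow-up argument, but with the conclusion delivered by Lemma \ref{BasicLemma} rather than by the trivial rigidity of Euclidean solutions. The hypothesis $s \geq 2t$ is crucial here: it converts the $\ep_0\sqrt{t}$ slack in \eqref{initprop} into $\ep_i\sqrt{t_i/(s_i-t_i)} \leq \ep_i \to 0$ at rescaled time $\tilde\tau = 0$, so the rescaled data $\tilde Z_i(\cdot,0)$ are honest $o(1)$-almost isometries of $\tilde h_i(0)$ on balls of radius $\la_i \to \infty$. The Hessian estimate from Stage 1 rescales to $|\tilde\grad^2 \tilde Z_i|\leq \sqrt{\al_0}/\sqrt{\tilde\tau}$, and $\tilde h_i$ is $C^2$-close to $\de$ on any fixed ball by the Cheeger--Gromov convergence from Stage 1. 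For $i$ sufficiently large, Lemma \ref{BasicLemma} therefore applies with parameters tuned to $(c_1,n,\al_0)$ and produces the $1+\al_0$-Bi-Lipschitz conclusion \eqref{BasicLemmaIneq} at $\tilde\tau = 1$, contradicting the assumed failure at $s_i$. Finally, the topological statements about $Z(s)$ being a homeomorphism with controlled image follow readily: \eqref{flow_gradest} makes $Z(s)$ a smooth immersion, \eqref{flow_bilip} makes it globally injective on $B_{d_0}(m_0,3/2)$, the outer inclusion $D_s \subseteq \B_2(Z(s)(m_0))$ follows from the upper Bi-Lipschitz constant together with \eqref{comparison-geo-ball-diff-time}, and the inner $\B_{5/4}(Z(s)(m_0)) \subseteq D_s$ from the lower constant on the sphere $\partial B_{d_0}(m_0, 3/2)$ combined with invariance of domain. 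I expect the hardest part of the argument to be the careful bookkeeping in the blow-up limit, so that the three small parameters — $\ep_0$, the rescaling factor $\la^{-1}$, and the Cheeger--Gromov radius in Lemma \ref{DGLemma} — can all be chosen compatibly against the fixed $(c_1,n,\al_0)$.
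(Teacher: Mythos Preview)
Your approach shares the paper's core machinery---parabolic rescaling so the target time becomes $1$, Lemma~\ref{DGLemma} to pin the rescaled background $C^k$-close to Euclidean, then Lemma~\ref{BasicLemma} for the Bi-Lipschitz output---but the paper runs it \emph{directly} rather than by contradiction. It fixes $\alpha = \alpha(c_1,n,\alpha_0^3)$ from Lemma~\ref{BasicLemma} at the outset, sets an intermediate waiting time $\sigma = \alpha^8$, and then takes $\ep_0$ small (for instance $\ep_0 \leq \alpha^{16}$, plus whatever Lemma~\ref{DGLemma} demands) so that for \emph{every} $r \in [2t,S]\cap[0,T/2]$ the rescaled metric at time $\tilde t + \sigma$ is $\alpha$-close to $\delta$ in $C^k$ in geodesic coordinates, whence Lemma~\ref{BasicLemma} applies on the nose. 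No subsequences, no limits; the gain is explicit dependence of $\ep_0, S$ on $(n,c_1,\alpha_0)$, whereas your contradiction framing yields only existence. The paper also does not run a separate blow-up for \eqref{flow_second_gradient}; it simply quotes Theorem~\ref{reg_harm} for the bound $c(c_1,n)/(s-t)$ and uses that (not your sharpened $\alpha_0$) to integrate \eqref{flow_c0time}---which is logically cleaner, since you need the $C^0$-time bound on all of $[t,s]$, not just on $[2t,s]$ where your sharpened Hessian estimate lives.

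There is one genuine gap in your Stage~3. Your blow-up is centred at a single bad point, and Lemma~\ref{BasicLemma} then delivers its conclusion only on the rescaled ball $\B_{\alpha_0^{-1}}(0)$, i.e.\ on an original-scale ball of radius $\sim\sqrt{s_i-t_i}/\alpha_0$. If the failure of \eqref{flow_bilip} is witnessed by a pair $(x_i,y_i)$ with $d_0(x_i,y_i)$ large compared to $\sqrt{s_i-t_i}$, the rescaled $y_i$ escapes to infinity and you get no contradiction. Nor does \eqref{flow_gradest} by itself yield the global \emph{lower} bound in \eqref{flow_bilip}: integrating along a $g(s)$-geodesic gives only the upper bound, and pulling back the Euclidean segment from $Z(x,s)$ to $Z(y,s)$ requires it to stay in the image $D_s$, which you have not yet established. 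The paper treats such ``far pairs'' by a separate direct estimate: for $d_0(z,y) \gtrsim \sqrt{r}/\alpha_0^3$ one writes
\[
|Z(z,r)-Z(y,r)| \geq |Z(z,t)-Z(y,t)| - c(c_1,n)\sqrt{r}
\]
via \eqref{flow_c0time}, feeds \eqref{initprop} into the first term, and absorbs the additive error $c(c_1,n)\sqrt{r}$ into the multiplicative one using $d_0(z,y) \geq \sqrt{r}/(2\alpha_0^3) \gg c(c_1,n)\sqrt{r}$. You would need to add this argument.
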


A direct consequence of \eqref{flow_gradest} is the following corollary.

\begin{cor} \label{corflowthm}Assuming the set up of Theorem  \ref{flowthm}, the following metric inequalities hold
\begin{gather}
  (1-\al_0)^2  g(x,s) \leq   (Z(s))^*\delta  \leq  (1+\al_0)^2 g(x,s) \label{firstin} \\
 (1-\al_0)^2  (Z(s))_*g(y,s) \leq \de  \leq (1+\al_0)^2 (Z(s))_*g(y,s) \label{secondin}
   \end{gather}
 for all $x \in B_{d_0}(m_0,2)$, for all 
 $y \in  \B_{5/4 }(Z(s)(m_0))  \subseteq Z(s)( B_{d_0}(m_0,2)) \subseteq \R^n $ and for all $s\in [2t, S(n,c_1,\al_0)]  \cap [0,T/2] $. 
 \end{cor}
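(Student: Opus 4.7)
The plan is to prove Corollary \ref{corflowthm} by unfolding the definitions of pullback and pushforward of Riemannian metrics and then quoting the pointwise differential estimate \eqref{flow_gradest} of Theorem \ref{flowthm}. Both \eqref{firstin} and \eqref{secondin} will turn out to be essentially rephrasings of the same infinitesimal bi-Lipschitz statement, squared.

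For \eqref{firstin}, I would fix $x\in B_{d_0}(m_0,2)$ and an arbitrary $v\in T_x M$, and use the defining identity
$$((Z(s))^*\delta)(v,v) \,=\, \delta(dZ(s)_x(v),dZ(s)_x(v)) \,=\, |dZ(s)_x(v)|_\delta^2.$$
Squaring \eqref{flow_gradest} then yields
$$(1-\al_0)^2\, |v|_{g(s)}^2 \,\leq\, |dZ(s)_x(v)|_\delta^2 \,\leq\, (1+\al_0)^2\, |v|_{g(s)}^2,$$
which, since $v$ is arbitrary, is exactly \eqref{firstin} as an inequality of symmetric bilinear forms at the point $x$.

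For \eqref{secondin} I first need to check that $Z(s)\colon B_{d_0}(m_0,3/2)\to D_s$ is actually a diffeomorphism, so that the pushforward metric is genuinely well-defined on $\B_{5/4}(Z(s)(m_0))\subseteq D_s$. Theorem \ref{flowthm} already provides that $Z(s)$ is a homeomorphism onto its image $D_s$; the lower bound in \eqref{flow_gradest} shows that $dZ(s)_x$ is injective (hence invertible, by equality of dimensions) at every $x$, so the inverse function theorem upgrades $Z(s)$ to a local diffeomorphism, and combined with the homeomorphism property this gives a global diffeomorphism onto $D_s$. Given this, for any $y\in \B_{5/4}(Z(s)(m_0))$ and $w\in T_y\R^n$, I set $x := Z(s)^{-1}(y)$ and $v := (dZ(s)_x)^{-1}(w)$; by definition of pushforward,
$$((Z(s))_* g(s))(w,w) \,=\, g(s)(v,v) \,=\, |v|_{g(s)}^2,$$
while $|w|_\delta = |dZ(s)_x(v)|_\delta$ satisfies $(1-\al_0)|v|_{g(s)} \leq |w|_\delta \leq (1+\al_0)|v|_{g(s)}$ by \eqref{flow_gradest}. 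Squaring and rearranging then produces \eqref{secondin}.

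Since the whole argument only involves pointwise unfolding of tensorial definitions followed by an application of the infinitesimal bi-Lipschitz bound, there is no substantive obstacle. The only subtlety worth flagging is the diffeomorphism claim needed for the pushforward to make sense, and this is settled immediately from the lower bound in \eqref{flow_gradest} via the inverse function theorem combined with the homeomorphism statement already contained in Theorem \ref{flowthm}.
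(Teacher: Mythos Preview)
Your proof is correct and matches the paper's approach: the paper does not give a detailed argument, stating only that the corollary is ``a direct consequence of \eqref{flow_gradest}''. Your unfolding of the pullback/pushforward definitions together with the observation that \eqref{flow_gradest} plus the inverse function theorem upgrades the homeomorphism $Z(s)$ to a diffeomorphism is exactly the intended justification, made explicit.
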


\begin{proof}[Proof of Theorem \ref{flowthm}]
From Theorem \ref{reg_harm}, we know that 
$|\grad^2 Z(x,s)|_{g(s)} \leq c(c_1,n)s^{-1/2} $ and hence 
  \begin{eqnarray}
  && |Z(x,s)-Z(x,t)| \leq c(c_1,n)\sqrt{s-t}  \label{initialc0est}
  \end{eqnarray} 
 for all $s \in [t,S(n,c_1)] \cap[0,T),$ $x \in  B_{d_0}(m_0,8 ) \subseteq B_{g(s)}(m_0,9) \subseteq  B_{d_0}(m_0,10)$.
We show the rest of the estimates hold for arbitrary  $r \in [2t,S(n,c_1,\al_0)] \cap [0,T/2],$  $x \in B_{d_0}(m_0,4),$  if $S(n,c_1,\al_0)$ is chosen to be small enough. First we scale the solution to the Ricci-harmonic map heat flow and the metric $g(t)$ by $1/ \sqrt{r}$ respectively $1/ r$:
$$\ti Z(z, s):=  \frac{1}{\sqrt r} Z(z, s r)\quad \text{and}\quad \ti g(\cdot, s )= \frac{1}{r}g(\cdot, r s)$$
for $z \in  B_{d_0}(m_0, 8 )$. Then the solution is defined for $s \in [\ti t := t/r, \ti T:= T/r]$  on $B_{\ti d_0}( x,  1/\sqrt{r} )$ for any $x  \in B_{d_0}(m_0, 7 )$   where $\ti t  = t/r \leq 1/2$, since $r \geq 2t$ and  $\ti T \geq 2,$ since $r \leq T/ 2$,  and the radius $V:= 1/ {\sqrt{r}}$ satisfies 
$ V \geq 1/ \sqrt{S(n,c_1,\al_0)}$.  Since before scaling, we have $|\Rm(\cdot,s)|\leq \ep^2_0/s$, after scaling we still have $|\widetilde \Rm(\cdot,s)|\leq \ep^2_0/s$ on $B_{\ti d_0}( x,V)$ for all 
$s \in [\ti t,2].$
The time $r$ has scaled to the time $1$.
The property \eqref{initprop} scales to 
\begin{equation}\begin{split}
|\ti Z(z,\ti t) -\ti Z(y,\ti t)| & \in \Big( (1-\ep_0)\, \ti d_{\ti t}(z,y) - \ep_0 \sqrt{\ti t}, (1+\ep_0)\,\ti d_{\ti t}(z,y)  +\ep_0 \sqrt{\ti t}\Big)  \label{tiZ_initial} \\
& \subseteq  \big( (1-\ep_0)\, \ti d_{\ti t}(z,y) - \ep_0 , (1+\ep_0)\, \ti d_{\ti t}(z,y) + \ep_0 \big)
\end{split}
\end{equation}
for all $z,y \in  B_{\ti d_0}( x,V)$  since $\ti t = t/r \leq 1/2$. The inequality \eqref{initialc0est} scales to
 \begin{equation}
 |\ti Z(z,  \ti s)- \ti Z(z, \ti t)| \leq c(c_1,n)\sqrt{\ti s-\ti t}  \label{scaledc0est}
  \end{equation} 
 for all $\ti s \in [\ti t,2],$ $z \in B_{\ti d_0}(x,V ),$ and the gradient estimate,
 is also scale invariant:   $|\ti \grad \ti Z(\cdot,\ti s)|_{\ti g(\ti s)} \leq c_1$ still holds on  $B_{\ti d_0}(x,V )$ for all $\ti s \in [\ti t,2]$.
We also have
$$|\ti Z(\cdot,\ti \si)-Z(\cdot,\ti t)|\leq c(c_1,n) \sqrt{\si}$$ on $B_{\ti d_0}(x, V)$, due to 
\eqref{scaledc0est}, for   $\ti \si := \si +\ti t,$  $\si \in (0,1)$.  Hence 
\begin{equation*}
\begin{split}
|\ti Z(z,\ti \si) - \ti Z(y,\ti \si)| &\geq    |\ti Z(z,\ti t) - \ti Z(y,\ti t)| - |\ti Z(z,\ti t) -\ti Z(z,\ti \si) | - | \ti Z(y,\ti t) -\ti Z(y,\ti \si) | \\
&\geq (1- \ep_0)\, \ti d_{\ti t}(z,y) -  \ep_0 -  c(c_1,n)\sqrt{\si} \\
&\geq (1- \si )\, \ti d_{\ti \si}(z,y) - 2c(c_1,n)\sqrt{\si}  
\end{split}
\end{equation*}
for $\si $ fixed and $\ep_0 \leq \si^2 $, 
and similarly, 
\begin{equation*}
|\ti Z(z,\ti \si) - \ti Z(y,\ti \si)|  \leq   (1+\si )\, \ti d_{\ti \si}(z,y) + 2c(c_1,n)\sqrt{\si} 
\end{equation*}
for $z,y  \in B_{\ti d_0}(x,V)$ if $\ep_0 \leq \si^2$.
That is 
$\ti Z(\cdot, \ti \si) $ is an  $\al^2$ almost isometry on $B_{\ti d_0}(x,V)$ if we choose $\si = \al^{8}$. At this point we fix $\al := \al(n,c_1,\al^3_0)$ where $\al$ is the function appearing in the statement of Lemma \ref{BasicLemma} and we set $\si := \al^{8}$. Without loss of generality, $\al \leq \al_0< c(c_1,n),$ and $(\al_0)^{-1}\geq 2c(c_1,n)$ for any given $c(c_1,n) \geq 1.$
   We  also still assume $\ep_0 \leq \si^2 = \al^{16}$, so that the previous conclusion,  $\ti Z(\cdot, \ti \si) $ is an  $\al^2$-almost isometry,  and hence certainly an $\al$-almost isometry, on $B_{\ti d_0}(x,V)$, holds, as explained above.

%Translating in time, $\hat g(\cdot,\hat t): = \ti g(\hat t+ \ti t)$,
%$ \hat Z(x, \hat t):= \ti Z(x, \hat t + \ti t)$, we see that we have a solution to the Ricci-harmonic map heat flow,
%$ \partt \hat Z = \lap_{\hat g) \hat Z$ where 
%\begin{eqnarray}
%|\hat Z(x,0) -\hat Z(y,0)| && \in  ( (1-\ep_0)\hat d_{0}(x,y) - \ep_0 , (1+\ep_0)\hat  d_{0}(x,y) + \ep_0 ).
%\end{eqnarray}
%The time $1$ has translated to the time $\hat t := 1-\ti t \in [\frac 1 2 ,1]$.
The curvature estimate,  $|\widetilde \Rm (\cdot,s)| \leq \ep^2_0/s $ for all  $s \in  [0,2] \supseteq [\ti t,2]$, holds, as do  the scaled distance estimates,
\begin{equation}\label{tidistest}
\ti d_{ \ell} +\ep_0\sqrt{s-\ell}  \geq \ti d_{s}  \geq \ti d_{\ell} -\ep_0 \sqrt{s -\ell} 
\end{equation}
  on $B_{\ti d_0}(x, V) \Subset M,$ for all $0\leq \ell \leq s  \in [0,2].$ 
The estimates of Shi imply, as explained in Lemma \ref{RicciFlowBasics}, that 
at time $\ti \si:= \ti t + \si =\ti t + \al^8,$ 
\begin{equation*}
|\widetilde \Riem|(\cdot,\ti \si) + |\ti \grad \widetilde\Riem|^2(\cdot,\ti \si) + \ldots +|\ti \grad^k \widetilde \Riem|  (\cdot,\ti \si)\leq  \frac{\be(k,n,\ep_0)}{\si^{k +2}}= \frac{\be(k,n,\ep_0)}{\al^{8k + 16}} \label{almost_flat_begin}
\end{equation*}
on $B_{\ti d_0}( x, 2V/3)$ 
 where  $\be(k,n,\ep_0) \to 0$ as $\ep_0 \to 0$ for fixed $k$ and $n$. In particular 
\begin{equation}
\sup_{ B_{\ti d_0}( x, 2V/3)}|\widetilde \Riem|(\cdot,\ti \si) + |\ti \grad \widetilde\Riem|^2(\cdot,\ti \si) + \ldots + |\ti \grad^k \widetilde \Riem|  (\cdot,\ti \si)\to 0\label{almost_flat}
\end{equation} 
as $\ep_0 \to 0$ for fixed $c_1,k,n,\al_0.$
 Without loss of generality, $L = 1/\sqrt{S(n,c_1,\al_0)}  \geq 10/\al$
 and hence $B_{\ti d_s}(x, 2\al^{-1})  \subseteq  B_{\ti d_0}( x, L/2)  \subseteq B_{\ti d_0}( x,2V/3)$ for all $s \in [0,2]$.

By  \eqref{tidistest}, \eqref{almost_flat},  and \eqref{tiZ_initial},
 we see  using Lemma \ref{DGLemma} with $h=\ti g(\ti t)$, and $g= \ti g(\ti \si)$, $L = \al^{-1}$,  that $( B_{\ti d_0}(x, \al^{-1}),\ti g(\ti \si))$ is $\al$ close in the $C^k$-norm to a Euclidean ball with the standard metric  in the Cheeger-Gromov sense (that is up to smooth diffeomorphisms), if $\ep_0$ is small enough.

Hence there are geodesic coordinates $\phi$ on the ball  $(B_{\ti d_0}(x,\al^{-1}),\ti g(\ti \si))$ such that the metric $\ti g(\ti \si)$ written in these coordinates is $\al$ close to
 $\de$  in the $C^k$ norm, if we keep $\si$ fixed and choose $\ep_0$ small enough.
 Using  \eqref{almost_flat}, and the evolution equation $\partt \ti g = -2\Rc(\ti g)$ in the coordinates $\phi$, we see that 
 the evolving metric $ h(\cdot) = \phi_*( \ti g(\cdot))$  in these coordinates is also, without loss of generality, $\al$ close to $\de$ for $t \in [\ti \si, 2]$ in the $C^2$ norm.

Using the above, we see that  $$ G(\cdot,t):= (\ti Z(\cdot,t+\ti \si) - \ti Z(x,\ti \si)) \of (\phi)^{-1}(\cdot)$$ defined on $\B_{\al^{-1}}(0)\times [0, 3/2]$ sends $0$ to $0$ at $t=0$,  is Lipschitz with respect to $\de$ with Lipschitz constant $2c(c_1,n)$ and is 
an $\al$ almost isometry at time $0$.
Lemma \ref{BasicLemma} is then applicable to the function
$G$  and tells us $G(\cdot,s)$ is an $\al_0^3$ almost isometry at $s= 1- \ti \si$  on a ball of radius $(\al_0)^{-3}$ and  that the   inequalities \eqref{BasicLemmaIneq} hold.
Hence, 
\begin{equation*}
\begin{split}
|d \ti Z(v)(1)| &\in \big( (1-\al^3_0)|v|_{\ti g(1)}, (1+\al^3_0)|v|_{\ti g(1)}\big), \\
|\ti Z(z,1) - \ti Z(y,1)| &\in \big( (1-  \al_0^{3} ) d_{\ti g(1)}(z,y), (1+\al_0^{3})d_{\ti g(1)}(z,y)  \big),
\end{split}
\end{equation*}
and  
$$|\ti Z(z,\ti t) - \ti Z(z,1)| \leq \al^3_0 \sqrt{1-\ti t}$$ 
for all $z,y \in 
B_{\ti d_0}(x,\al_0^{-3})$ for all $v \in T_z M$.
This scales back to 
\begin{equation*}
\begin{split}
|d Z(v)(r)| &\in \big( (1-\al^3_0)|v|_{g(r)}, (1+\al^3_0)|v|_{ g(r)}\big), \\
|Z(z,r) -Z(y,r)| &\in \big( (1-  \al_0^{3} )\, d_{g(r)}(z,y), (1+\al_0^{3})\, d_{g(r)}(z,y)  \big),
\end{split}
\end{equation*}
and 
$$|Z(z, t) -  Z(z,r)| \leq \al^3_0\sqrt{r-t}$$
for all $z,y \in 
B_{d_0}(x,\sqrt{r}/\al^3_0)$ for all $v \in T_z M$.

For  $z \in B_{d_0}(x,\sqrt{r}/(2\al^3_0))$ and $y \in  ( B_{d_0}(x,\sqrt{r}/\al^3_0 ))^c \cap ( B_{d_0}(m_0,6)),$ we show that 
the property \eqref{flow_bilip}  also holds. For such $z,y$, we have,  
 $d_{0}(z,y)  \geq \sqrt r /(2 \al^3_0)$ and hence 
\begin{equation*}
\begin{split}
  |Z(z,r)-  Z(y,r)| & \geq   |Z(z,t) -Z(y,t)|  -c(c_1,n)\sqrt{r-t} , \\
  & \geq (1-\ep_0)\, d_t(z,y) - \ep_0\sqrt{t} - c(c_1,n) \sqrt{r} , \\
& \geq (1 - \ep_0)\, d_0(z,y) +  (1-\ep_0)( d_t(z,y) -d_0(z,y)) - 2c(c_1,n) \sqrt{r} , \\
&  \geq(1 - \ep_0)\, d_0(z,y) -\ep_0\sqrt{r} - 2c(c_1,n) \sqrt{r} , \\
& \geq (1 - \al_0^{2})\, d_{0}(z,y) - 3c(c_1,n)   \sqrt{r} \\
& \geq (1- 10\al_0^2 )\, d_{0}(z,y)  + 9 \al_0^2 d_0(z,y) - 3c(c_1,n)   \sqrt{r} \\ 
& \geq (1- 10\al_0^2 )\, d_{0}(z,y)  + 9\al_0^2 \frac{ \sqrt{r} }{ 2\al_0^3}  - 3c(c_1,n)   \sqrt{r} \\ 
& \geq (1- 10\al_0^2 )\, d_{0}(z,y)  + 9c(c_1,n) \sqrt{r}   - 3c(c_1,n)   \sqrt{r} \\ 
& \geq (1- 10\al_0^2  )\, d_{0}(z,y) +   \sqrt{r}  \\
& \geq  (1- 10\al_0^2  )\, d_{r}(z,y) -\ep_0 \sqrt{r} +  \sqrt{r} \\
& \geq (1-\al_0)\, d_{r}(z,y) 
\end{split}
\end{equation*} 
as required, where the first inequality follows from \eqref{initialc0est}, the second from the condition \eqref{c},  the seventh and eighth from the fact that $d_{0}(z,y) \geq \sqrt{r} /( 2 \al^3_0) \geq c(c_1,n) \sqrt{r}/ \al^2_0$ and we have used $\ep_0 < \al_0^{4},$ $c(c_1,n) \geq 1,$ the distance estimates \eqref{distest}, and $\al_0 \leq 1 /c(c_1,n)$ freely.

It remains to show the property that 
$$\B_{1}(Z(s)(m_0))  \subseteq D_s \subseteq \B_{2}(Z(s)(m_0))$$ for $D_s:= Z(s)(B_{d_0}(m_0,3/2)) \subseteq \R^n,$ 
for $s\in (2t,S]$, $s \leq T/2$, $Z(s):= Z(\cdot,s)$.
Observe that 
$$Z(s): B_{d_0}(m_0,3/2) \to \R^n$$
 is smooth and satisfies \eqref{flow_gradest}. In particular,  $Z(s)$ is a local diffeomorphism, due to the Inverse Function Theorem, and hence 
$\B_{r}(Z(s)(m_0)) \subset D_s$ for some maximal $r>0.$  Let $p_i := Z(s)(x_i) \in  \B_{r}(Z(s)(m_0)) \cap D_s,$   such that $p_i \to p  \in \boundary  \B_{r}(Z(s)(m_0)) $ where $p \notin D_s$. Assume that $r\leq 5/4$. Then $ x_i \in B_{d_0}( m_0, 5/4 +C(n)\al )$. 
After taking a subsequence, if necessary, $x_i \to x \in B_{d_0}(m_0,5/4+C(n)\al)),$ and consequently, $p_i = Z(s)(x_i)  \to Z(s)(x) = p \in D_s,$ as $i\to \infty$  which is a contradiction to the definition of $r>0$, if $r\leq 5/4.$ Hence $r\geq 5/4,$ which implies
 $ \B_{5/4}(Z(s)(m_0)) \subset D_s,$  as claimed (the second inclusion follows immediately from the Bi-Lipschitz property).
 \end{proof}
 
With the help of the previous theorem, we now show that it is possible to construct a solution to the $\de$-Ricci-DeTurck flow coming out of $\ti d_0:= (F_0)_{*}d_0$ using the harmonic map heat flow, if we assume that  \eqref{chat} is satisfied. 
First we show that by slightly mollifying the distance coordinates at time $t$, we obtain maps which satisfy \eqref{c}.

\begin{lemma}\label{mollified_dist}
Let $(M,g(t))_{t\in [0,T]}$ be a solution to Ricci flow satisfying \eqref{tildea}, \eqref{b}, for some  $R \geq \be^2(n) \ep_0^2 +200$ 
and let $d_0$ be as defined in \eqref{definitiond_0}. Assume that there are points $a_1, \ldots, a_n$ such that 
 $F_0: B_{d_0}(x_0,R) \to \R^n,$  $F_0(x) := (d_0(a_1,x), \ldots, d_0(a_n,x))$ satisfies 
 \eqref{chat} on $B_{d_0}(x_0,100)$ , and let
$F_t:B_{d_0}(x_0,R) \to \R^n$ be given by
$$F_t(x) = (d_t(a_1,x), \ldots, d_t(a_n,x)).$$ 
Then by mollifying    $F_t$ at an  appropriately small scale, as explained in the proof, we obtain a map $\moll{F_t}:B_{d_0}(x_0,R) \to \R^n$ which is smooth and satisfies $|\grad^{g(t)} \moll{F_{t}}|_{g(t)} \leq c(n)$ as well as
\eqref{c} on $B_{d_0}(x_0,50)$  (with $\ep_0$ replaced by $2\ep_0$), provided $t\leq \hat T(\ep_0,R)$.
\end{lemma}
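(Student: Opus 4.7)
The plan is to define $\hat{F_t}$ by a standard convolution mollification carried out in geodesic normal coordinates of $g(t)$, at the scale $r_t := \epsilon_0\sqrt{t}/C(n)$ for a dimensional constant $C(n)$ to be fixed below. The starting observation is that each component $(F_t)_i(\cdot) = d_t(a_i,\cdot)$ is $1$-Lipschitz with respect to $d_t$ by the triangle inequality, so $|\grad^{g(t)}(F_t)_i|_{g(t)} \leq 1$ almost everywhere. The Shi-type estimates of Lemma \ref{RicciFlowBasics} provide $C^k$ control $|\grad^{g(t),j}\Rm(g(t))|_{g(t)} \leq \beta(k,n,\epsilon_0)\,t^{-(j+2)/2}$ on $B_{d_0}(x_0,10R)$ for $t \leq \hat T(\epsilon_0,R)$. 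At the scale $r_t$ this gives $r_t^{j+2}|\grad^j\Rm| \to 0$ as $\epsilon_0 \to 0$, so the exponential map $\exp^{g(t)}_x\colon \B_{r_t}(0)\subset T_xM \to B_{g(t)}(x,r_t)$ is a diffeomorphism whose pullback metric is $C^k$-close to the Euclidean one.

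Fixing a standard smooth radially symmetric mollifier $\phi_{r_t}$ on $T_xM\cong \R^n$ with $\int \phi_{r_t} = 1$ and $\operatorname{supp}\phi_{r_t} \subset \B_{r_t}(0)$, I set
\begin{equation*}
\hat{F_t}(x) := \int_{T_xM} F_t(\exp^{g(t)}_x(v))\,\phi_{r_t}(|v|_{g(t)})\,dv.
\end{equation*}
This is smooth in $x$ because the integrand depends smoothly on the base point through the exponential map. Using the $1$-Lipschitz property of each $(F_t)_i$ together with the fact that $d_t(\exp^{g(t)}_x(v),x) \leq (1+o(1))|v|_{g(t)}$ at scale $r_t$, one gets $|\hat{F_t}(x) - F_t(x)| \leq C(n)\, r_t$. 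Choosing $C(n)$ in the definition of $r_t$ appropriately forces $|\hat{F_t}(x) - F_t(x)| \leq \tfrac{1}{2}\epsilon_0\sqrt{t}$. Applying this at $x$ and $y$ and combining with the property \eqref{c} for $F_t$ itself (which follows from $\hat{\rm c}$ and \eqref{distest}) yields
\begin{equation*}
|\hat{F_t}(x) - \hat{F_t}(y)| \in \bigl((1-\epsilon_0)d_t(x,y) - 2\epsilon_0\sqrt{t},\ (1+\epsilon_0)d_t(x,y) + 2\epsilon_0\sqrt{t}\bigr),
\end{equation*}
which is stronger than the desired variant of \eqref{c} with $\epsilon_0$ replaced by $2\epsilon_0$.

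For the gradient bound, I would differentiate under the integral sign in normal coordinates at $x$. Because the pullback of $g(t)$ under $\exp^{g(t)}_x$ is $C^k$-close to $\delta$ on $\B_{r_t}(0)$ (uniformly as $\epsilon_0 \to 0$), the calculation reduces to the standard Euclidean one: the convolution of a $1$-Lipschitz function against a smooth compactly supported mollifier has gradient bounded by a dimensional constant depending only on the fixed profile $\phi$, yielding $|\grad^{g(t)}\hat{F_t}|_{g(t)} \leq c(n)$. The main obstacle, and the only genuinely non-routine step, is controlling the derivative of $\hat{F_t}(x)$ with respect to $x$, since one must differentiate both through the base point of the exponential map and through the metric-dependent norm $|v|_{g(t)}$ appearing in $\phi_{r_t}$; this is precisely where the quantitative $C^k$-closeness of the exponential coordinates to the Euclidean ones — guaranteed by Shi's estimates and the choice $r_t \ll \sqrt{t}/\epsilon_0$ — is needed to absorb the error terms into a clean dimensional bound.
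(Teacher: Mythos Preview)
Your proposal is correct and takes essentially the same approach as the paper. The paper's proof is a brief sketch asserting that, since $F_t$ already satisfies \eqref{c} (via $(\hat{\rm c})$ and \eqref{distest}) and is $c(n)$-Lipschitz with respect to $d_t$ by the triangle inequality, mollifying at a sufficiently small scale yields a smooth map with the same properties up to the stated loss; you have supplied one concrete instantiation of this (exponential-map mollification at scale $r_t\sim\ep_0\sqrt{t}$, with Shi's estimates controlling the normal-coordinate geometry) together with the routine verifications.
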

\begin{proof}
As already noted, 
$$F_t|_{B_{d_0}(x_0,50)}: B_{d_0}(x_0,50) \to \R^n$$
 satisfies  \eqref{c}  in view of the distance estimates \eqref{distest}, if $t \leq \hat T(\ep_0,R)$.  Also, it is well known, that the Lipschitz norm of any map $F_t$ as defined above may be estimated by a constant depending only on $n$:  
$$\sup_{x \neq y \in B_{d_0}(x_0,R)} \frac{|F_t(x) -F_t(y)|}{d_t(x,y)} \leq  c(n),$$ 
in view of the triangle inequality. Hence, by  mollifying the map $F_t$ at an appropriately small scale, we obtain a map $\moll{F_{t}}:B_{d_0}(x_0,50) \to \R^n$ which is  smooth  and satisfies  \eqref{c} (with $\ep_0$ replaced by $2\ep_0$)  and  $|\grad \moll{F_{t}}|_{g(t)} \leq c(n)$.
\end{proof}

 \smallskip

\begin{thm}\label{RicciDeTurck}
Let $(M,g(t))_{t\in [0,T]}$ be a solution to Ricci flow satisfying  \eqref{tildea}, \eqref{b}  for an 
$R \geq \be^2(n) \ep_0^2 +200$ 
and let $d_0$ be as defined in \eqref{definitiond_0}. Assume that there are points $a_1, \ldots, a_n$ such that 
 $F_0: B_{d_0}(x_0,R) \to \R^n,$  $F_0(x) := (d_0(a_1,x), \ldots, d_0(a_n,x))$ satisfies 
 \eqref{chat} on $B_{d_0}(x_0,100)$,
  and let  $\moll{F}_{t_i}$ be the corresponding  mollified functions from Lemma \ref{mollified_dist} for any sequence of times $t_i>0$ with $t_i \to 0$ as $i\to \infty$.
Let $\hat S := \min(S(n,\alpha_0)$ and
$$Z_{t_i}:B_{d_0}(x_0,100) \times [t_i,\hat S,T/2)] \to \R^n$$ 
be the Dirichlet solution to the Ricci-harmonic map heat flow  with boundary and initial values given by $Z_{t_i}(\cdot,s)|_{\boundary B_{d_0}(x_0,100)} = \moll{F}_{t_i}(\cdot),$ for all $s \in [t_i,\hat S]$  and $Z_{t_i}(\cdot,t_i)  = \moll{F}_{t_i}(\cdot)$.
Then, after taking a subsequence in $i$, the maps  
$$Z_{t_i}(s): B_{d_0}(x_0,3/2) \to D_{s,i}:= Z_{t_i}(s)(B_{d_0}(x_0,3/2)) \subseteq \R^n$$  
are homeomorphisms for all $s \in [ 2t_i, \hat S] $, with $\B_{1}( F_0(x_0)) \subseteq D_{s,i}$ and 
$(Z_{t_i})_* g \to \ti g$ smoothly, as $i\to \infty$ on compact subsets of $ \B_{1}( F_0(x_0)) \times (0,\hat S],$  where ${\ti g(s)}_{s \in (0,\hat S]}$ is a smooth family of metrics which solve the $\de$-Ricci-DeTurck flow and 
\begin{equation}
 (1-\al_0) \de \leq \ti g(s)  \leq (1+\al_0) \de \label{metricin}
 \end{equation}
 for all $s \in(0,\hat S)$, provided $\ep_0 = \ep_0(\al_0,n)>0$ from \eqref{a}, and \eqref{chat} are small enough.
 The metric $\ti d(t): = d(\ti g(t))$ satisfies, 
 $ \ti d(t) \to \ti d_0:= (F_0)_*d_0$ uniformly on $  \B_{1}( F_0(x_0))$ as $t \to 0.$ 
\end{thm}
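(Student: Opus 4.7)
The plan has three main movements. First, I would construct the maps $Z_{t_i}$ and establish the homeomorphism property with the claimed image. Apply Lemma \ref{mollified_dist} at each $t_i$ to produce smooth mollifications $\moll{F}_{t_i}$ satisfying \eqref{c} (with $\ep_0$ replaced by $2\ep_0$) and obeying a uniform Lipschitz bound $|\grad^{g(t_i)}\moll{F}_{t_i}|_{g(t_i)} \leq c(n)$. Theorem \ref{thm-grad-est} then yields the unique smooth Dirichlet solutions $Z_{t_i}$ on $B_{d_0}(x_0,100)\times[t_i,\hat S]$, together with a uniform gradient estimate $|\grad^{g(s)} Z_{t_i}|_{g(s)}\leq c'(n)$ on a slightly smaller ball. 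Feeding this into Theorem \ref{flowthm} with $m_0=x_0$, $c_1=c'(n)$, and the prescribed $\al_0$, I obtain both the bi-Lipschitz bound \eqref{flow_bilip} and the homeomorphism statement: $Z_{t_i}(s):B_{d_0}(x_0,3/2)\to D_{s,i}$ is a homeomorphism and $\B_{5/4}(Z_{t_i}(s)(x_0))\subseteq D_{s,i}$. Combining the $C^0$-in-time estimate \eqref{flow_c0time} with $\moll{F}_{t_i}(x_0)\to F_0(x_0)$ shows $Z_{t_i}(s)(x_0)\to F_0(x_0)$, and hence $\B_1(F_0(x_0))\subseteq D_{s,i}$ for all $i$ sufficiently large.

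Second, I would transfer the flow to Euclidean space and extract a limit. The push-forwards $\ti g_i(s):=(Z_{t_i}(s))_* g(s)$ are, by the standard DeTurck identity for Ricci-harmonic map heat flow into $(\R^n,\de)$, smooth solutions to the $\de$-Ricci-DeTurck flow on $D_{s,i}$, and Corollary \ref{corflowthm} gives the uniform two-sided bound $(1-\al_0)^2\de\leq\ti g_i(s)\leq(1+\al_0)^2\de$ on $\B_1(F_0(x_0))$. Because $\de$-Ricci-DeTurck is strictly parabolic and these solutions are uniformly $C^0$-close to $\de$, standard parabolic bootstrap (or the Koch-Lamm interior estimates cited in the introduction) supplies locally uniform $C^k$ bounds on compact subsets of $\B_1(F_0(x_0))\times(0,\hat S]$ for every $k$. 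An Arzel\`a-Ascoli diagonal argument then extracts a subsequence along which $\ti g_i\to\ti g$ in $C^\infty_{\mathrm{loc}}$; passing to the limit in the equation shows that $\ti g$ is a smooth $\de$-Ricci-DeTurck solution satisfying \eqref{metricin}.

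Third, and most delicate, I would verify the initial-value claim $\ti d(t)\to\ti d_0:=(F_0)_* d_0$ uniformly on $\B_1(F_0(x_0))$. For $y,z\in\B_1(F_0(x_0))$ and $i$ large, write $y = Z_{t_i}(t)(x)$, $z = Z_{t_i}(t)(x')$ with $x,x'\in B_{d_0}(x_0,3/2)$; since $\ti g_i(t)$ is the push-forward of $g(t)$, length-minimising curves correspond (after controlling where they lie via the uniform bi-Lipschitz property), giving
\begin{equation*}
(1-\al_0)\,d_t(x,x')\;\leq\; d(\ti g_i(t))(y,z) \;\leq\; (1+\al_0)\,d_t(x,x').
\end{equation*}
Sending $i\to\infty$ with $t$ fixed uses the smooth convergence $\ti g_i\to\ti g$ and the $C^0$ convergence $Z_{t_i}(t)\to F_0$ (which follows from \eqref{flow_c0time} together with $\moll{F}_{t_i}\to F_0$); then letting $t\downarrow 0$ and invoking the distance estimate \eqref{distest} to replace $d_t$ by $d_0$, one obtains $\ti d(t)(y,z)\to\ti d_0(y,z)$. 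The principal obstacle is precisely this final step: the diffeomorphisms $Z_{t_i}(t)$ vary with both $i$ and $t$ simultaneously, so one needs a careful diagonal extraction that exploits the uniformity of the bi-Lipschitz constants across all $(i,t)$ in order to promote this pointwise estimate to uniform convergence on all of $\B_1(F_0(x_0))$.
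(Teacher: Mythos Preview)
Your first two movements are essentially the paper's argument: apply Lemma \ref{mollified_dist}, Theorem \ref{thm-grad-est} and Theorem \ref{flowthm} to get the homeomorphism with $\B_{5/4}(Z_{t_i}(s)(x_0))\subseteq D_{s,i}$, then use \eqref{flow_c0time} to shift the centre to $F_0(x_0)$; push forward to a $\de$-Ricci-DeTurck flow, invoke Corollary \ref{corflowthm} for \eqref{metricin}, and use interior parabolic estimates (the paper cites \cite[Lemma 4.2]{MilesC0paper}) together with Arzel\`a--Ascoli to extract $\ti g$.

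The third movement, however, contains a genuine gap. Two issues:

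\emph{(i)} The displayed inequality $(1-\al_0)\,d_t(x,x')\leq d(\ti g_i(t))(y,z)\leq(1+\al_0)\,d_t(x,x')$ is not what you want. Since $\ti g_i(t)$ is the push-forward of $g(t)$ by the diffeomorphism $Z_{t_i}(t)$, one has the \emph{equality} $d(\ti g_i(t))(y,z)=d_t(x,x')$ (once one checks that minimising curves stay in the relevant domains, which follows from the bi-Lipschitz control). If you only retain the $(1\pm\al_0)$ bi-Lipschitz inequality, then after passing to the limit you can only conclude $(1-\al_0)\ti d_0\leq\liminf\ti d(t)\leq\limsup\ti d(t)\leq(1+\al_0)\ti d_0$, which is strictly weaker than the claimed convergence $\ti d(t)\to\ti d_0$.

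\emph{(ii)} The assertion ``$Z_{t_i}(t)\to F_0$ with $t$ fixed'' is false: \eqref{flow_c0time} gives $|Z_{t_i}(\cdot,t)-\moll{F}_{t_i}|\leq c\sqrt{t-t_i}$, so at fixed $t>0$ the limit in $i$ is only $c\sqrt{t}$-close to $F_0$, not equal to it.

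The paper's remedy is to separate the two limits cleanly. Using the uniform estimates from Theorem \ref{flowthm}, extract (via Arzel\`a--Ascoli) a $C^{1,\al}$ limit map $Z:B_{d_0}(x_0,2)\times(0,\hat S)\to\R^n$ of the $Z_{t_i}$; the estimate \eqref{flow_c0time} passes to the limit as $|Z(s)-F_0|\leq c\sqrt{s}$. Now work entirely with the limit: for $v,w\in\B_1(F_0(x_0))$ write $v=Z(s)(x)$, $w=Z(s)(y)$ and use the isometry $\ti d_s(v,w)=d_s(x,y)$. Then
\[
\ti d_s(v,w)=d_s(x,y)\leq d_0(x,y)+\ep_0\sqrt{s}=\ti d_0(F_0(x),F_0(y))+\ep_0\sqrt{s}\leq \ti d_0(v,w)+\be(s)+\ep_0\sqrt{s},
\]
where the last step uses $|Z(s)-F_0|\leq c\sqrt{s}$ together with the \emph{uniform continuity} of $\ti d_0$ on the compact set $\overline{\B_1(F_0(x_0))}$; similarly for the lower bound. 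This delivers uniform convergence directly, and your anticipated ``diagonal extraction'' difficulty never arises.
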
 
\begin{rmk}
Examination of the proof of Theorem \ref{RicciDeTurck} shows that: \\[1ex]
(i) We can remove condition \eqref{b} if we assume that the estimates \eqref{distest} are satisfied.\\[1ex]
(ii) If we remove  condition  \eqref{chat} and replace it by the assumption:
there exists a sequence of times $t_i>0$ with $t_i \to 0$ as $i\to \infty$, and maps
$\hat F_{t_i}: B_{d_0}(x_0,100)  \to \R^n$ each of which satisfies  \eqref{c}, $\sup_{i\in \N } |\hat F_{t_i}(x_0)| < \infty,$  and $|\grad^{g(t_i)} \hat F_{t_i}|_{g(t_i)} \leq c_1$ then we can use these $\hat F_{t_i}$ in the above, instead of the slightly mollified distance functions,   and the conclusions of the theorem still hold for $s \leq  \hat S:= \min(S(n,c_1,\al_0),T/2)$ if the $\ep_0 = \ep_0(n,c_1,\al_0)$ appearing in \eqref{a} and \eqref{c} is small enough. In this case,  $F_0$  is the uniform $C^0$ limit of a subsequence of the $F_{t_i}$ as $i\to \infty$  and satisfies \eqref{bilipproperty}. The existence of such an $F_0$  is always guaranteed in this setting,  as explained directly after the introduction of the condition \eqref{c}.

\end{rmk}
\begin{proof}
Theorem \ref{flowthm} tells us that the maps  $Z_{t_i}(s): B_{d_0}(x_0,3/2) \to D_{s,i}$ are homeomorphisms with 
$\B_{5/4}( Z_{t_i}(s)(x_0)) \subseteq D_{s,i}$  for $s \in [2t_i,\hat S]$.
Hence $$ \B_{1}(F_0(x_0)) \subseteq  \B_{5/4}( Z_{t_i}(s)(x_0)) \subseteq D_{s,i},$$  for $s \in [2t_i,\hat S],$ in view of \eqref{flow_c0time}.
We define $\ti g_i(s): = (Z_{t_i})_{*}g(s)$ for $s \in [2t_i,\hat S]$ on $\B_{\frac 3 2 }(F_{t_i}(x_0))$. This is well defined in view of Theorem \ref{flowthm}. Then $\ti g_i $ is a solution to the $\de$-Ricci-DeTurck flow on $\B_{3/2}(F_{t_i}(x_0))$ (see \cite[Chapter 6]{HamFor} for instance) and satisfies the metric inequalities 
\eqref{metricin} for all $s \in (2t_i, \hat S)$ in view of Corollary \ref{corflowthm}.
Using \cite[Lemma 4.2]{MilesC0paper} we see that
\begin{equation*}
|D^j \ti g_i(s)| \leq \frac{c(j,n)}{(s-2t_i)^{p(j)}}
\end{equation*}
for all $j\in \N$, for all $s\in (2t_i,\hat S)$ on $\B_{1}(F_{t_i}(0))$. Taking a subsequence in $i$  we obtain the desired solution $\ti g(s)_{s\in (0,\hat S)}$ on $\B_{1}(F_0(0))$ with 
\begin{equation*}
|D^j \ti g(s)| \leq \frac{c(j,n)}{s^{p(j)}}.
\end{equation*}
The $Z_{t_i}$ all satisfy the estimates stated in the conclusions of Theorem \ref{flowthm}, and so there is a uniform $C^{1,\al}$   limit map
$Z: B_{d_0}(x_0,2) \times (0,\hat S) \to \R^n$, in view of the Theorem of Arzel\`a-Ascoli. Furthermore, $Z(s)= Z(\cdot,s)$ satisfies   
$$|Z(s) -F_0| \leq c_1\sqrt{s}$$ for $s \in (0,\hat S)$ in view of the estimate \eqref{flow_c0time}. 
Let $v, w \in \B_1(0)$ be arbitrary, and $x,y$ the corresponding points in $B_{d_0}(x_0,2)$ at time $s$, that is the unique points
$x,y$ with $Z(s)(x) = v,Z(s)(y) = w$. Then 
\begin{equation*}
\begin{split}
\ti d_s(v,w) & = d_s(x,y)  \leq  d_0(x,y) + \ep_0 \sqrt{s}  = \ti d_0(F_0(x),F_0(y))  + \ep_0 \sqrt{s}  \\
& \leq \ti d_0(Z(s)(x), Z(s)(y)) + \be(s) + \ep_0 \sqrt{s}  = \ti d_0(v,w) + \be(s) + \ep_0 \sqrt{s}, 
  \end{split}
  \end{equation*}
  where $\be(s) \to 0$ as $s \to 0$. Here, we have used the fact that $\ti d_0:= (F_0)_*(d_0)$ is continuous, and hence uniformly continuous on 
  $\overline{\B_1(0)}$, and that 
  $$\sup_{B_{d_0}(x_0,2)} |Z(s)(\cdot)  -F_0(\cdot) | \leq c_1 \sqrt{s}$$ 
  for all $s \in (0,\hat S)$ in the above.  The continuity of $\ti d_0 := (F_0)_*d_0$ with respect to the norm in $\R^n$ follows from the fact that $\ti d_0$ is a metric, equivalent to the standard metric  on $\R^n$ in view of the property (\ref{bilipproperty}). Similarly, $\ti d_s(v,w) \geq \ti d_0(v,w)  - \be(s) - \ep_0 \sqrt{s},$ as required. 
\end{proof}

\section{Ricci-harmonic map heat flow in the continuous setting}\label{continuous_solutions}

We now assume, in addition to the assumptions \eqref{a}, \eqref{b} and \eqref{chat} of the previous chapter, more regularity on $d_0$ and $\ti d_0$.
Namely, we assume that $\ti d_0$ is generated by a continuous Riemannian metric $\ti g_0$ on $\B_{1}(0)$. This assumption will guarantee for all $\ep>0$ the existence of local maps defined on balls of radius $r(\ep)$,  which are $1+\ep$ Bi-Lipschitz maps at $t=0$. 
We explain this in the following Lemma.
\begin{lemma}\label{continuouslemma}
Let $ (X,d)$ be a metric space,  $B_{d}(y_0,10) \Subset X$ and 
$F:B_{d}(y_0,1) \to \R^n$ be  a $1 + \ep_0$  Bi-Lipschitz homeomorphism  with $F(y_0) = 0$, and assume that $\ti d = (F)_*d$ is generated on $\B_{1/4}(0)$  by a continuous Riemannian metric $\ti g$, which is defined on $\B_{1}(0)$. 
Then for all $\ep>0$,  there exists an $r>0$ such that for all $p \in B_{d}(y_0,1/8)$ there exists a linear transformation, $A=A(p)$,  $A:\B_{r}(F(p)) \to \R^n$  with $|A-Id|_{C^0} \leq 2$ such that $ \hat F := A \circ F$ satisfies
\begin{equation*}
 (1-\ep)|\hat F(y) -\hat F(q)| \leq d(y,q) \leq (1+\ep) |\hat F(y) -\hat F(q)| 
\end{equation*}
 for all $y,q \in B_{d}(p,r/2 ),$  and
  $\vol(B_{d}(p,s)) \in ( (1-\ep)^{n} \omega_n s^n, (1+\ep)^{n} \omega_n s^n)$ for all $s \leq r,$ for all such $p$.
\end{lemma}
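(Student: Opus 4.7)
The plan is to construct $A(p)$ as a linear normalisation of the continuous Riemannian metric $\tilde g$ at the image point $F(p)$, and then to use uniform continuity of $\tilde g$ on a compact set to see that through the map $\hat F = A(p) \circ F$ the metric $d$ is arbitrarily close to Euclidean at sufficiently small scales. First, for $p \in B_d(y_0,1/8)$ set $p' := F(p)$. Since $F$ is $(1+\ep_0)$-Bi-Lipschitz, $\tilde d = F_* d$ is $(1+\ep_0)$-equivalent to Euclidean distance, so in particular $p' \in \B_{(1+\ep_0)/8}(0) \Subset \B_{1/4}(0)$ and $\tilde g(p')$ is well-defined. Because $\tilde d$ coincides with $d(\tilde g)$ on $\B_{1/4}(0)$ and $\tilde g$ is continuous, the standard infinitesimal identity
\[
\sqrt{\tilde g(p')(e,e)} = \lim_{t\downarrow 0}\frac{\tilde d(p',\,p'+te)}{t} \in \bigl[(1+\ep_0)^{-1},\,(1-\ep_0)^{-1}\bigr]
\]
holds for every Euclidean unit vector $e$, so the symmetric positive-definite matrix $\tilde g(p')$ is close to the identity. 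I would take $A(p) := \tilde g(p')^{1/2}$, the symmetric positive square root; then the operator norm of $A(p)-\Id$ is at most $2\ep_0$, which immediately yields $|A - \Id|_{C^0(\B_r(p'))} \leq 2$ for any $r \leq 1$.

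Next, the set $K := F(\overline{B_d(y_0,1/8)})$ is a compact subset of $\B_{1/4}(0)$, so given $\ep > 0$ I would use uniform continuity of $\tilde g$ in a thickened neighbourhood of $K$ to pick $r \in (0,1)$ so small that $\B_{3r}(p')\subset \B_{1/4}(0)$ for every $p' \in K$ and
\[
(1-\eta)\,\tilde g(p') \leq \tilde g(u) \leq (1+\eta)\,\tilde g(p') \qquad \text{for all } u \in \B_{3r}(p'),
\]
with $\eta = \eta(\ep)$ chosen so that $\sqrt{1\pm\eta} \in (1-\ep/4,\,1+\ep/4)$. Fix $y,q \in B_d(p,r/2)$ and write $u = F(y),\, v = F(q) \in \B_{(1+\ep_0)r/2}(p')$. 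Any near-$\tilde d$-minimising path $\gamma$ joining $u$ and $v$ has $\tilde d$-length at most $(1+\ep_0)|u-v| + o(1) \leq 2r$, and the $(1+\ep_0)$-equivalence of $\tilde d$ with $|\cdot|$ then forces every point of $\gamma$ to lie in $\B_{3r}(p')$. Comparing the $\tilde g$-length of $\gamma$ with its length with respect to the constant reference metric $\tilde g(p')$ yields $L_{\tilde g}(\gamma) = (1+O(\ep))\,L_{\tilde g(p')}(\gamma)$; since the $\tilde g(p')$-distance from $u$ to $v$ is realised by the straight segment and equals $|A(p)(u-v)|$, taking infima and using $\tilde d(u,v) = d(y,q)$ and $\hat F(y) - \hat F(q) = A(p)(u-v)$ gives the claimed inequality $(1-\ep)|\hat F(y)-\hat F(q)| \leq d(y,q) \leq (1+\ep)|\hat F(y)-\hat F(q)|$ after re-absorbing the $\ep/4$ into $\ep$.

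For the volume statement, the previous step exhibits $\hat F$ as a $(1+\ep)$-Bi-Lipschitz homeomorphism of $B_d(p,s)$ onto its image for every $s \leq r/2$; this image is sandwiched between the Euclidean balls $\B_{(1-\ep)s}(\hat F(p))$ and $\B_{(1+\ep)s}(\hat F(p))$, and a $(1+\ep)$-Bi-Lipschitz map distorts $n$-dimensional Hausdorff measure (equal to Lebesgue measure on $\R^n$) by at most $(1+\ep)^n$, delivering the two-sided bound $\Vol(B_d(p,s)) \in ((1-\ep)^n\omega_n s^n,\,(1+\ep)^n \omega_n s^n)$. The principal obstacle is the tandem use, in the middle paragraph, of the uniform continuity of $\tilde g$ and the a priori $(1+\ep_0)$-equivalence of $\tilde d$ with Euclidean distance to confine near-minimising paths inside the region where $\tilde g$ is nearly constant; everything else is bookkeeping.
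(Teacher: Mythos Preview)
Your proof is correct and follows essentially the same route as the paper's: choose $A$ so that $A_*\tilde g$ equals $\delta$ at the point $F(p)$, use continuity of $\tilde g$ to make $A_*\tilde g$ uniformly close to $\delta$ on a small ball, translate this into the $(1+\ep)$-bi-Lipschitz estimate for $\hat F$, and read off the volume bound. You are in fact more careful than the paper on two points: you invoke \emph{uniform} continuity of $\tilde g$ on the compact set $K=F(\overline{B_d(y_0,1/8)})$ to obtain a single $r$ valid for all $p$ (the paper's proof writes ``for any $x$ we can find an $r$'' without making the uniformity explicit), and you spell out the confinement argument for near-minimising curves, which the paper elides entirely.
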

 \begin{proof}
The continuity of $\ti g$ means: for any $\ep>0$ and any $x \in \B_{1}(0)$ we can find an $r>0$ and a 
 linear transformation,
$A: \B_{r}(x) \to A(B_r(x))$ with $|A-Id|_{C^0(\B_r(x))} \leq c(n) \ep_0$ so that $\hat g := A_{*}(\ti g)$ satisfies 
$|\hat g - \de|_{C^0(B_r(x))} \leq \ep,$ and  $\hat g (x) = \de$. For the distance  $\hat d := A_{*} \ti d$  this means 
\begin{equation*}
(1-\ep) |z-w|  \leq \hat d(z,w) \leq (1+\ep)|z-w| 
\end{equation*}
for all $z,w \in \B_{r/2}(A(x)).$
Returning to the original domain, we see that this means
\begin{equation*}
 (1-\ep)|\hat F(y) -\hat F(q)| \leq d(y,q) \leq (1+\ep) |\hat F(y) -\hat F(q)| 
\end{equation*}
 for all $y,q \in B_{d}(p,r/2 )$ where $\hat F(p) = A(x)$, and
 $\hat F= A \circ F$.
 This means in particular in view of the existence of the $1 +\ep$ Bi-Lipschitz map $\hat F$, that
 $(1+c(n)\ep)^{n} \omega_n s^n \geq \vol(B_{d}(p,s)) \geq (1-c(n)\ep)^{n} \omega_n s^n$ for all $s \leq r$.
 \end{proof}
This implies that we can replace condition \eqref{a} by condition 
\begin{equation} \label{ahat}\tag{$\hat{\rm a}$}
\begin{split}
& |\Rm(\cdot,t)| \leq \frac{\ep(t)}{t} \mbox{ on } B_{d_0}\left(x_0, 1/10 \right) \ \ \mbox{ 
for  all}   \ \ t \in (0,1) \mbox{ where }   \\
& \ep:[0,1] \to [0,1] \mbox{  is a continuous non-decreasing function  with }  \ep(0) = 0,
\end{split}
\end{equation}
as we show in the following Lemma.

\begin{lemma}\label{curvaturedecaylemma} 
Assume $(M,g(t))_{t\in (0,T)}$ is a solution to Ricci flow satisfying \eqref{tildea} and \eqref{b} for some $R \geq \be^2(n)\ep_0^2 + 200$, and assume $F_0:B_{d_0}(x_0,1) \to \R^n $ 
is a Bi-Lipschitz map and that $\ti d_0 = (F_0)_*d_0$ and $F_0$  satisfy the assumptions of Lemma \ref{continuouslemma}, where $d_0$ is defined by \eqref{definitiond_0}. Then \eqref{ahat}  holds on $ B_{d_0}\left(x_0, 1/10\right). $
\end{lemma}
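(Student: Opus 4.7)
The plan is to argue by contradiction via parabolic rescaling, extracting a smooth limit Ricci flow whose initial data is isometric to Euclidean space, and then invoking a rigidity step forcing the limit to be flat.

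Suppose \eqref{ahat} fails. Then there exist $\delta>0$, times $t_i\downto 0$ and points $x_i\in\overline{B_{d_0}(x_0,1/10)}$ with $t_i|\Rm(g(t_i))(x_i)|_{g(t_i)}\geq \delta$; passing to a subsequence, $x_i\to x_\infty$ in $d_0$. I would parabolically rescale $\tilde g_i(s):=t_i^{-1}g(t_is)$, so that $\tilde d_i(0):=t_i^{-1/2}d_0$. These flows satisfy the scale-invariant estimates $|\Rm(\tilde g_i)|\leq \ep_0^2/s$, $\Rc(\tilde g_i(s))\geq -t_i\to 0$, and $|\Rm(\tilde g_i)(x_i,1)|\geq \delta$. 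The key input from Lemma~\ref{continuouslemma} is that by choosing $\ep_i\downto 0$ slowly enough that $r(\ep_i)\geq 2R\sqrt{t_i}$ for any fixed $R$, the maps $\hat G_i:=t_i^{-1/2}(A(x_i)\of F_0)$ are $(1+\ep_i)$-Bi-Lipschitz from $(B_{\tilde d_i(0)}(x_i,R),\tilde d_i(0))$ into $\R^n$, and the rescaled Euclidean-type volume estimate $\vol(B_{\tilde d_i(0)}(x_i,s))\in((1-\ep_i)^n\omega_n s^n,(1+\ep_i)^n\omega_n s^n)$ holds for $s\leq R$. Hence the initial pointed metric spaces $(B_{\tilde d_i(0)}(x_i,R),\tilde d_i(0),x_i)$ converge uniformly to $(\B_R(0),d_{\mathrm{eucl}},0)$ as $i\to\infty$.

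Next I would apply Hamilton's compactness theorem. The uniform curvature bound $|\Rm(\tilde g_i)|\leq \ep_0^2/s$ on $\{s\geq s_0\}$ is immediate. Volume non-collapsing of $\tilde g_i(1)$ at $x_i$ would follow from the almost-Euclidean initial volume combined with Colding's volume convergence (valid under the uniform Ricci lower bound $-t_i$ and the distance estimates \eqref{distest}), propagated up to scale $1$ at time $s=1$ by Bishop--Gromov. A diagonal subsequence argument as $s_0\to 0$ produces a smooth complete limit Ricci flow $(M_\infty^n,\tilde g_\infty(s),x_\infty)_{s\in(0,\infty)}$ with $|\Rm(\tilde g_\infty)|\leq \ep_0^2/s$, $\Rc(\tilde g_\infty)\geq 0$, $|\Rm(\tilde g_\infty)(x_\infty,1)|\geq \delta$, and such that $d(\tilde g_\infty(s))\to d_\infty^0$ uniformly on compact sets as $s\to 0$, where $(M_\infty,d_\infty^0,x_\infty)$ is isometric to $(\R^n,d_{\mathrm{eucl}},0)$.

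The final step is rigidity: I would apply Theorem~\ref{RicciDeTurck} to $\tilde g_\infty$ with the distance coordinates $F_0=\mathrm{Id}$ on $(\R^n,d_{\mathrm{eucl}})$, producing a smooth $\de$-Ricci-DeTurck flow $\tilde h(s)$ on a Euclidean ball with $\tilde h(s)\to \de$ uniformly as $s\to 0$. Since the trivial flow $\tilde h\equiv \de$ is another such solution with the same initial values, the $L^2$-uniqueness for $\de$-Ricci-DeTurck flows $C^0$-close to $\de$ (the $L^2$-lemma of the paper's outline, Lemma~\ref{L2Lemma}) forces $\tilde h\equiv \de$; hence $\tilde g_\infty$ is flat after pullback by diffeomorphisms, contradicting $|\Rm(\tilde g_\infty)(x_\infty,1)|\geq\delta$. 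The main obstacle is this rigidity step, which requires invoking the $L^2$-uniqueness tool developed later in the paper and checking that its hypotheses are met at the limit; verifying the volume non-collapsing (particularly in controlling the potentially logarithmic behaviour of $\int R\,ds$ as $s\to 0$) will also require careful handling, most cleanly via Colding's volume continuity under the uniform lower Ricci bound.
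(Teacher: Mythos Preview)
Your contradiction-plus-rescaling setup and the extraction of a limit Ricci flow with $\Rc\geq 0$, $|\Rm|\leq\ep_0^2/s$, nonzero curvature at the marked point at time $1$, and initial metric space isometric to $(\R^n,d_{\mathrm{eucl}})$ closely mirror the paper's argument. The volume non-collapsing is organised slightly differently --- the paper reads it off directly from the metric inequalities of Corollary~\ref{corflowthm} rather than via Colding volume convergence plus Bishop--Gromov propagation --- but either route is fine.

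The genuine gap is in the rigidity step. Lemma~\ref{L2Lemma} and its uniqueness Corollary~\ref{CorL2Lemma} require the two DeTurck solutions to agree on the \emph{lateral} boundary $\partial\B_R(0)\times(0,T)$, not merely at $t=0$. The flow $\tilde h(s)$ produced by Theorem~\ref{RicciDeTurck} has no reason to equal $\de$ on $\partial\B_R(0)$ for $s>0$, so you cannot compare it with the constant solution $\de$ via that lemma; and there is no obvious way to pass to all of $\R^n$, since nothing forces $\tilde h(s)-\de$ to lie in $L^2(\R^n)$. A secondary problem: Theorem~\ref{RicciDeTurck} only gives $d(\tilde h(s))\to(F_0)_*d_0$, not $\tilde h(s)\to\de$ in $C^0$; the upgrade to $C^0$ metric convergence is Theorem~\ref{continuous_thm}, whose proof relies on the improved distance estimates~\eqref{distest2} that come from the very lemma you are proving, so invoking it here is circular.

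The paper's rigidity argument is more classical and sidesteps both issues. Since the limit flow's initial metric space is all of $(\R^n,d_{\mathrm{eucl}})$, one rescales the time-$1$ slice $\ell(1)$ by factors $R_i^{-2}$ with $R_i\to\infty$; the (scaled) distance estimates force $(B_{R_i^{-2}\ell(1)}(p_0,1),R_i^{-2}\ell(1))$ to Gromov--Hausdorff converge to the Euclidean unit ball, and Cheeger--Colding volume convergence then gives $\AVR(\ell(1))=\omega_n$. The equality case of Bishop--Gromov forces $(\Omega,\ell(1))$ to be isometric to $(\R^n,\de)$, contradicting the nonzero curvature at the marked point.
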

%\rmk{A smallness condition on $\ep_0$ is not needed in this Lemma}.
\begin{proof}
Let $\si >0$ be given, and assume, that there are $t_i \to 0$ and $p_i \in B_{d_0}(x_0, 1/100)$ with $|\Rm(p_i,t_i)| = \si/t_i$. We scale the solution $(g(t))_{t\in(0,T)}$ so that the time $t_i$ scales to time $1$, i.e.~we define a sequence of solutions to Ricci flow as follows: 
$$g_i(s):= t_i^{-1}g(t_i s)$$ 
for $s\in(0,T/t_i)$.

The distance estimates \eqref{distest}, the estimates of Shi, \eqref{shi_est}, hold, and $\vol( B_{g_i(s)}(p_i,1)) \geq \omega_n/2$ for all $s \in (0,100)$ in view of Corollary \ref{corflowthm}. Hence, after taking a subsequence, we obtain a smooth solution $(\Omega,\ell(t),p_0)_{t\in  (0,10]}$ with $|\Rm(p_0,1)| =\si,$ 
$\Rc \geq 0$ everywhere, and $|\Rm(\cdot,t)| \leq \ep^2_0/t$ everywhere.  Furthermore, writing $d_0(i) := t_i^{-1/2}d_0$, we see, in view of the distance estimates,  that  $d(\ell(t)) \to \hat d_0$ with $t \to 0$, where  
$( B_{d_0(i)}(p_i, \frac{1}{\sqrt{t_i}} ), d_0(i) , p_i) \to (\Omega, \hat d_0, p_0)$ in the Gromov-Hausdorff sense, as $i\to \infty$. But $(\Omega, \hat d_0)$  must be isometric to $(\R^n,\de)$ since there are $1+ \ep$ Bi-Lipschitz maps $F(i):(B_{d_0(i)}( p_i, r(\ep)/\sqrt t_i),d_0(i)) \to \R^n$   for all $\ep>0$, in view of Lemma \ref{continuouslemma}.
Using a similar argument to the one used in \cite{Yokota} and \cite{SchulzeSimon}, we see that the   asymptotic volume ratio of $(\Omega,\ell(t))$ must be $\omega_n$, as we now explain. Take a sequence $R_i \to \infty$ at time $s>0$. Scale $\ell_i(s) := R^{-2}_i\ell(sR_i^2)$. 
Since  $(\Omega, R_i^{-1} \hat d_0)$ is isometric to $(\R^n,\de)$
for all $i \in \N,$ we must have 
$(B_{\hat \ell_i}(x_0,1),\hat \ell_i )  \to  (\B_1(0),\de)$ in the Gromov-Hausdorff sense as $i\to \infty,$ for $\hat \ell_i := \ell_i(1/R_i^2 )$ ($=R_i^{-2} \ell(1)$)
in view of the scaled distance estimates \eqref{distest}.
Hence, $\vol(B_{\hat \ell_i}(x_0,1),\hat \ell_i) \to  \omega_n$ as $i\to \infty$, in view of the Theorem of Cheeger-Colding on volume convergence (Theorem 5.9 of \cite{cheeger_colding_vol_cgnce}).
But this means that the asymptotic volume ratio of $(\Omega,\ell(s),p_0)$ is $\omega_n$, and hence, $(\Omega,\ell(s),p_0)$ is isometric to $(\R^n,\de)$,  in view of the Bishop-Gromov comparison principle (the case of equality). This  contradicts the fact that $|\Rm(p_0,1)| =\si.$ 
\end{proof}

Note that we obtain the  better distance estimates in the setting of this Lemma,
\begin{equation}\label{distest2}
\begin{split}
 &d_r + \ep(t) \sqrt{t-r}  \geq d_t  \geq d_r - \ep(t)\sqrt{t-r} \mbox{ for all } t\in [r,1) \\
&\mbox{ on } B_{d_0}(x_0, 1/20) \subseteq B_{g(s)}(x_0,1) \Subset M  
\end{split}
\end{equation}
for all $r\geq 0$, where $\ep$ is without loss of generality, the same function appearing in the condition \eqref{ahat}.

\begin{thm}\label{continuous_thm}
Assume $(M,g(t))_{t\in (0,T]}$ is a solution to Ricci flow satisfying \eqref{tildea} and \eqref{b}, and  that there are points $a_1, \ldots, a_n$ such that 
 $F_0: B_{d_0}(x_0,R) \to \R^n,$  $F_0(\cdot) := (d_0(a_1,\cdot), \ldots, d_0(a_n,\cdot))$ satisfies 
 \eqref{chat} on  $B_{d_0}(x_0,100),$ and  $\ti d_0 = (F_0)_*d_0$ and $F_0$ satisfy the assumptions of Lemma \ref{continuouslemma}, where $d_0$ is as defined in \eqref{definitiond_0}.
Then the solution \\$(\B_{1/2}(0),\ti g(s)) _{s\in (0,\min{T,S(n,\al_0,c_1)}]}$ to $\de$-Ricci-DeTurck flow   constructed in Theorem \ref{RicciDeTurck} satisfies $|\ti g(s) -\ti g_0|_{C^0(\B_{1/20}(0))} \to 0$ as $s \to 0$.
\end{thm}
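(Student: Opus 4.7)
The plan is to establish pointwise convergence: for each $p \in \B_{1/20}(0)$ and each $\ep > 0$, show that $|\ti g(s)(p) - \ti g_0(p)| \leq C(n)\ep$ for all sufficiently small $s$, with constants uniform in $p$. A covering argument on the compact set $\overline{\B_{1/20}(0)}$ then gives the claimed uniform $C^0$ convergence.

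For the first step, apply Lemma \ref{continuouslemma} at $p$: there exists $r = r(\ep) > 0$ (uniform in $p$ over compact subsets) and a linear $A = A(p)$ with $|A - \Id| \leq 2$ such that $A \circ F_0$ is $(1+\ep)$ Bi-Lipschitz on $B_{d_0}(F_0^{-1}(p), r/2)$; tracking the proof of the lemma, $A$ may be arranged so that $A^T A = \ti g_0(p)$, whence $A_\ast \ti g_0$ equals $\de$ at $A(p)$ and is $\ep$-close to $\de$ on a ball of radius $r/2$ around $A(p)$. By linearity of the Ricci-harmonic map heat flow in the Euclidean target, if $Z_{t_i}$ denote the maps constructed in Theorem \ref{RicciDeTurck} from the mollified distance coordinates $\moll{F_{t_i}}$, then $A \circ Z_{t_i}$ solves the same flow with initial data $A \circ \moll{F_{t_i}}$, and its push-forward of $g(s)$ equals $A_\ast \ti g(s)$. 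The task thus reduces to controlling $A_\ast \ti g(s)$ near $A(p)$.

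Next, parabolically rescale at scale $\la = r/C_0$ (suitable $C_0$): set $\bar g(y,\tau) := \la^{-2} g(y, \la^2 \tau)$, $\bar d_0 := \la^{-1} d_0$, and rescale target coordinates so that the rescaled $\bar F_0$ is $(1+\ep)$ Bi-Lipschitz on $B_{\bar d_0}(F_0^{-1}(p), 10)$; combined with the sharp distance estimates \eqref{distest2}, the $\bar F_{t_i}$ satisfy \eqref{c} with constant $\max(\ep, C(n)\ep(\la^2 \bar t_i))$. The decisive point is that by condition \eqref{ahat} from Lemma \ref{curvaturedecaylemma}, the rescaled curvature bound is $|\Rm|_{\bar g}(\cdot,\tau) \leq \ep(\la^2 \tau)/\tau$, whose constant $\ep(\la^2 S)$ can be made smaller than any prescribed $\ep_0(n,c_1,\ep)>0$ by choosing $\la$ small. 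Theorem \ref{flowthm} therefore applies in the rescaled setting with $\al_0 = \ep$, giving that the rescaled push-forward is $(1\pm\ep)^2$-close to $\de$ on a scale-one ball for $\tau \in (0, S(n,c_1,\ep)]$. Passing to the $i \to \infty$ limit as in Theorem \ref{RicciDeTurck} and unwinding both the parabolic rescaling and the linear push-forward by $A$ yields $|\ti g(y,s) - A^T A| \leq C(n)\ep$ for $y$ in a neighbourhood of $p$ and $s \leq \la^2 S$. Using $A^T A = \ti g_0(p)$ and the continuity of $\ti g_0$, one concludes $|\ti g(p,s) - \ti g_0(p)| \leq C(n)\ep$; uniformity in $p$ (via uniform $r(\ep)$, bounded $|A|$, and uniform continuity of $\ti g_0$) completes the proof by a finite cover.

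The technical heart of the argument is arranging the rescaling so that simultaneously (i) the local $(1+\ep)$ Bi-Lipschitz property of $A \circ F_0$ becomes condition \eqref{c} on a scale-one ball, and (ii) the curvature assumption descends to the smallness $\ep_0(n,c_1,\ep)$ required by Theorem \ref{flowthm}. The improved decay \eqref{ahat} --- established in Lemma \ref{curvaturedecaylemma} via the continuity of $\ti g_0$ and a non-collapsing argument --- is precisely what makes (ii) compatible with (i); without it, the fixed $\ep_0$ in the original hypothesis \eqref{a} would prevent $\al_0$ in Theorem \ref{flowthm} from being taken arbitrarily small, and one would obtain only $\al_0$-closeness rather than genuine $C^0$ convergence.
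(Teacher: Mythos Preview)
Your argument is correct and follows the same approach as the paper: compose with the linear $A$ from Lemma~\ref{continuouslemma}, use linearity of the Ricci-harmonic map heat flow so that $A\circ Z_{t_i}$ is again a solution, apply Theorem~\ref{flowthm} (after rescaling, using the improved curvature decay \eqref{ahat}) to force the push-forward $\ep$-close to $\de$, and then transform back by $A^{-1}$. You are in fact more explicit than the paper about the parabolic rescaling step and the decisive role of condition~\eqref{ahat} in allowing $\al_0$ in Theorem~\ref{flowthm} to be taken arbitrarily small; the paper's proof suppresses the rescaling and simply invokes Theorem~\ref{flowthm} on the small ball $B_{d_0}(p_0,r)$ with output time interval $(2t_i,S(n,\ep))$.
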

\begin{proof}
Using Lemma \ref{continuouslemma} we see the following: for any $\ep>0$ and any $p_0 \in B_{d_0}( x_0, 1/20 ) $ we can find an $r>0$ and a 
 linear transformation $A: \R^n \to \R^n$ ,  such that 
\begin{equation}
 (1-\ep)|\hat F_0(y) -\hat F_0(q)| \leq d_0(y,q) \leq (1+\ep) |\hat F_0(y) -\hat F_0(q)| \label{inbetween}
\end{equation}
 for all $y,q   \in B_{d_0}(p_0,r),$  
 for $\hat F_0= A \circ F_0$. We define  $ z_0 := F_0(p_0)$ and $\hat z_0 := \hat F_0(p_0)$. 
 Now since $A$ is a linear transformation with $|A- Id|_{C^0} \leq 2,$ and  \eqref{distest2} holds, we have $|\hat F_t - \hat F_0|_{C^{0}(B_{d_0}(x_0, \frac{1}{20}))}  \leq  \ep(t) \sqrt{t}$ for $\hat F_t = A \circ F_t$, and   hence 
 \begin{equation*}
(1-\ep)  d_t(v,q) - \ep \sqrt{t}   \leq   |\hat F_t(y) -\hat F_t(q)| \leq (1+\ep) d_t(v,q) + \ep \sqrt{t}  \label{inbetween2}
 \end{equation*}
 on $B_{d_0}(p_0,r)$ for all $t\leq T(\ep),$ where we have also used  \eqref{inbetween}.
 Let $Z_{t_i}:B_{d_0}(x_0, 1/2) \times [t_i,S(n,\al_0) ) \to \R^n$ be the solutions to Ricci-harmonic map heat flow 
 constructed in Theorem \ref{RicciDeTurck}. Then $\hat Z_{t_i} = A \circ Z_{t_i}$ is also a solution to Ricci-harmonic map heat flow.
 Using the regularity theorem, Theorem \ref{flowthm}, we see that we must have 
 \begin{gather*}
   (1-\si(\ep))d_{s}(z,w) \leq  | \hat Z_{t_i}(z,s) -  \hat Z_{t_i}(w,s) |  \leq   (1+\si(\ep))d_{s}(z,w)\\
   |\grad^{g(s)} \hat Z_{t_i}(v)|  \in \big( (1 - \si(\ep)) |v|_{g(s)} , (1 + \si(\ep)) |v|_{g(s)}\big) 
   \end{gather*}
   for all $z,w \in B_{d_0}(p_0,r/5)$ for all $v \in T_z B_{d_0}(p_0,r/5)$, for all $s \in (2t_i,S(n,\ep))$ where 
   $\si(\ep) \to 0$ as $ \ep \to 0$.
Hence Corollary \ref{corflowthm} tells us that the push forward $\hat g_i(s): = (\hat Z_i)_*(g(s))_{s \in  (2t_i,S(n,\ep))}$ satisfies $|\hat g_i(s) -\de|_{C^0(\B_{r/5}(\hat z_0))} \leq \si(\ep).$
Transforming back with $A^{-1}$ we see that this means
 $|\ti g_i(s) -\ti g_0|_{C^0(\B_{r/5}(z_0))} \leq \si(\ep)$ for all $s \in (2t_i,S(n,\ep))$, and hence
 $|\ti g(s) - \ti g_0|_{C^0(\B_{r/5}(z_0))} \leq \si(\ep)$ for all $s \in (0,S(n,\ep)).$ As $p_0 \in \B_{d_0}(x_0,\frac{1}{20})$ was arbitrary, we see by  letting $\ep \to  0$, that $|\ti g(s) - \ti g_0|_{C^0( \B_{1/20}(0))} \to 0$ as $s \to 0$, as required. \end{proof}

The estimates of the previous theorem allow us to give a proof of the second main theorem of the introduction:

\begin{proof}[Proof of Theorem \ref{continuity_of_solutions_intro}]
Let 
$a_1,\ldots, a_n \in B_{d_0}(x_0,r)$ be as in the statement of Theorem \ref{continuity_of_solutions_intro}. That is,
$F_0(\cdot):= (d_0(a_1,\cdot), \ldots, d_0(a_n,\cdot))$  is $1+\ep_0$ Bi-Lipschitz
on $B_{d_0}(x_0,5\ti r)$  for some $\ti r \leq r/5$. The metric 
 $\ti d_0(\ti x, \ti y):= d_0((F_0)^{-1}(\ti x), (F_0)^{-1}(\ti y))$ defined on 
$ B_{\ti r}(F_0(x_0))$ 
is generated by a continuous (with respect to the standard topology on $\R^n$) Riemannian metric $\ti g_0$ defined on 
$B_{ 4 \ti  r } (F_0(x_0)) \subseteq \R^n $. 
By scaling everything once, that is $\hat g(t) = \ti r^{-1} g(t)$ $\hat  d_0 = \ti r^{-1/2} d_0$,
we see that we are in the setting of Theorem   \ref{continuous_thm} (choosing $R = 1/\sqrt{\ti r}$ in conditions \eqref{a},\eqref{b},\eqref{chat}).  The conclusions of that theorem, when scaled back, imply the  conclusions of Theorem  \ref{continuity_of_solutions_intro}.
\end{proof}

  \section{Existence and estimates for the  Ricci-DeTurck Flow with $C^0$ boundary data }\label{Dirichlet_Section}
In this section we construct solutions $\ell$ to the Dirichlet problem for the $\de$-Ricci-DeTurck flow on a Euclidean ball, which are smooth up to the boundary at time zero, and have $C^0$ parabolic boundary values. These solutions are constructed as a limit of smooth solutions $ \ell_{\al}$ whose parabolic boundary values converge to those of $\ell$. 

 Recall that the $\de$-Ricci-DeTurck flow equation for a smooth family of metrics $\ell$ is given by (see  \cite[p.~15]{HamFor} and/or \cite[Lemma 2.1]{Shi})
\begin{equation}\label{eq:RicciDeTurckdelta}
\begin{split}
  \partt \ell_{ij}=\,\ell^{ab}\partial_a\partial_b \ell_{ij} +\tfrac12 \ell^{ab}\ell^{pq}\big(&\partial_i \ell_{pa}\partial_j\ell_{qb}
    +2\partial_a\ell_{jp}\partial_q\ell_{ib}\\
    &-2\partial_a\ell_{jp}
    \partial_b\ell_{iq}-2\partial_j\ell_{pa}\partial_b\ell_{iq}
    -2\partial_i\ell_{pa}\partial_b\ell_{jq}\big)\, .
\end{split}
\end{equation}
First we prove an estimate about the closeness of smooth solutions to $\de$ in the $C^0$ norm,  assuming $C^0$ closeness on the spatial boundary and a bound on the $C^2$ norm at time zero.
\begin{lemma}\label{C0Lemma}
Let $\ell$ be a $H^{2+ \al, 1+ \frac{\al}{2}}(\B_R(0) \times [0,T]) \cap C^0(\overline{\B_R(0)} \times [0,T])$ solution to  the $\de$-Ricci-DeTurck flow such that $\ell(\cdot,0) = \ell_0$, 
$\|\ell(\cdot,t) - \ell_0\|_{ L^\infty(\boundary \B_R(0))} \leq \be$ for $t\in [0,T]$ and
$\|\ell_0-\de\|_{C^2(\B_R(0))} \leq \varepsilon(n)$, where $\beta \leq \varepsilon(n)$. Then, 
 \begin{equation*}
 \begin{split}
 \|\ell(\cdot,t) - \de\|_{L^\infty(\B_R(0))} &\leq  c(n)\varepsilon(n),\quad\mbox{for all $t\in [0,T]$,}\\
 \|\ell(\cdot,t) - \ell_0\|_{L^\infty(\B_R(0))} &\leq  \beta +\varepsilon(n)t,\quad \mbox{for all $t\in [0,T]$.}
 \end{split}
 \end{equation*}
\end{lemma}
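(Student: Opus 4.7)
The plan is to apply the parabolic maximum principle twice: first to $u := |\ell-\delta|^2$ for the first estimate, then to $w := |\ell-\ell_0|^2$ for the second, treating $\ell_0$ as a fixed (non-evolving) smooth background tensor.

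For the first estimate, using \eqref{eq:RicciDeTurckdelta} and the fact that $\delta$ has vanishing derivatives, a direct calculation gives
\begin{equation*}
 (\partial_t - \ell^{ab}\partial_a\partial_b)\, u \;=\; 2(\ell-\delta)\cdot Q(\partial \ell)\; -\; 2\,\ell^{ab}\,\partial_a\ell\cdot\partial_b\ell,
\end{equation*}
where $Q(\partial\ell)$ schematically denotes the quadratic-in-gradient terms in \eqref{eq:RicciDeTurckdelta}, satisfying $|Q(\partial\ell)|\leq C(n)|\partial\ell|^2$ provided $\ell$ is uniformly close to $\delta$. The second term is essentially $-|\partial\ell|^2_{\ell}$, so
\begin{equation*}
 (\partial_t - \ell^{ab}\partial_a\partial_b)\,u \;\leq\; \bigl(2C(n)\|\ell-\delta\|_{L^\infty} - \tfrac{1}{2}\bigr)|\partial\ell|^2,
\end{equation*}
which is $\leq 0$ as long as $\|\ell-\delta\|_{L^\infty}\leq \eta_0(n)$ for some fixed small $\eta_0(n)$. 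On the parabolic boundary, $\|\ell-\delta\|\leq\beta+\varepsilon(n)\leq 2\varepsilon(n)\ll\eta_0(n)$. A standard continuity argument (consider the first time $t^\ast$ at which $\|\ell(\cdot,t^\ast)-\delta\|_{L^\infty}=\eta_0(n)/2$; the maximum principle on $[0,t^\ast]$ yields $u\leq 4\varepsilon(n)^2$, contradicting the definition of $t^\ast$) establishes the desired bound $\|\ell(\cdot,t)-\delta\|_{L^\infty}\leq c(n)\varepsilon(n)$ throughout $[0,T]$.

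For the second estimate, set $v := \ell-\ell_0$, so that $v$ satisfies
\begin{equation*}
 \partial_t v \;=\; \ell^{ab}\partial_a\partial_b v \;+\; \ell^{ab}\partial_a\partial_b \ell_0 \;+\; Q(\partial\ell).
\end{equation*}
Computing the evolution of $w=|v|^2$ gives
\begin{equation*}
 (\partial_t - \ell^{ab}\partial_a\partial_b)\,w \;=\; 2v\cdot\bigl[\ell^{ab}\partial_a\partial_b\ell_0 + Q(\partial\ell)\bigr] \;-\; 2\,\ell^{ab}\,\partial_a v\cdot\partial_b v.
\end{equation*}
The hypothesis $\|\ell_0-\delta\|_{C^2}\leq\varepsilon(n)$ together with Part 1 give $|\ell^{ab}\partial_a\partial_b\ell_0|\leq C(n)\varepsilon(n)$. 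Writing $\partial\ell = \partial v + \partial\ell_0$ and using $|\partial\ell_0|\leq\varepsilon(n)$, one estimates $|Q(\partial\ell)|\leq C(n)(|\partial v|^2+\varepsilon(n)^2)$. The contribution $2v\cdot C(n)|\partial v|^2$ is then absorbed into the non-positive term $-2\ell^{ab}\partial_a v\partial_b v$, thanks to the smallness $|v|\leq C(n)\varepsilon(n)$ already guaranteed by Part 1. What remains is a clean inequality
\begin{equation*}
 (\partial_t - \ell^{ab}\partial_a\partial_b)\,w \;\leq\; C(n)\,\varepsilon(n)\,\sqrt{w}.
\end{equation*}
Comparing with $\phi(t) := (\beta + C'(n)\varepsilon(n)\,t)^2$, which satisfies $\phi'(t) = 2C'(n)\varepsilon(n)\sqrt{\phi}$ and dominates $w$ on the parabolic boundary since $\phi(0)=\beta^2\geq w(\cdot,0)=0$ and $\phi(t)\geq\beta^2\geq w$ on $\partial\mathbb{B}_R(0)$, the parabolic maximum principle (applied to $\phi-w$, checking that any first violation is interior) yields $|v|\leq \beta + C'(n)\varepsilon(n)\,t$. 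Redefining $\varepsilon(n)$ to absorb the constant $C'(n)$ produces the stated inequality.

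\textbf{Main obstacle.} The delicate point is arranging that the quadratic gradient term $Q(\partial\ell)$ can be absorbed into the elliptic term without invoking any a priori gradient bound on $\ell$ beyond what follows from the $L^\infty$-control of Part 1. This is possible precisely because (i) the hypothesis $\|\ell_0-\delta\|_{C^2}\leq\varepsilon(n)$ gives us a smallness budget for $|\partial \ell_0|$, and (ii) Part 1 already guarantees $|v|\leq C(n)\varepsilon(n)$ globally, which is exactly the smallness needed to dominate $2|v|\cdot|\partial v|^2$ by the good term $\ell^{ab}\partial_a v\partial_b v$.
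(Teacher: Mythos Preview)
Your proposal is correct and follows essentially the same approach as the paper: derive subsolution inequalities for $|\ell-\delta|^2$ and $|\ell-\ell_0|^2$ and apply the parabolic maximum principle. The only cosmetic difference is in Part~2, where the paper simply bounds the right-hand side by a constant $c(n)\varepsilon(n)$ (using $\sqrt{w}\le c(n)\varepsilon(n)$ from Part~1) and applies the maximum principle to $w-c(n)\varepsilon(n)t$, whereas you retain the factor $\sqrt{w}$ and compare with the barrier $\phi(t)=(\beta+C'(n)\varepsilon(n)t)^2$; both routes give the stated bound after absorbing dimensional constants into~$\varepsilon(n)$.
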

\begin{proof} We will denote in the following $| \cdot |$ all norms induced by the metric $\de$. From smoothness and the boundary conditions, we know that $\ell$ is a smooth invertible metric  for a small time interval $[0,\tau]$ with  $ |\ell-\de|^2 \leq \varepsilon(n)$  during this time interval. By \eqref{eq:RicciDeTurckdelta} we can compute
\begin{equation*}
\begin{split}
\partt |\ell-\de|^2 &\leq 2(\ell-\de)^{ij} \ell^{ab}\partial_a \partial_b (\ell-\de)_{ij}
+c(n) |\ell-\de||D \ell|^2 \cr
& \leq \ell^{ab}\partial_a \partial_b |\ell-\de|^2 - 2(D\ell,D\ell)_{\ell}
+c(n) |\ell-\de||D \ell|^2 \cr
& \leq \ell^{ab}\partial_a \partial_b |\ell-\de|^2  -|D\ell|^2 + c(n) \varepsilon(n)   |D \ell|^2 \cr
& \leq \ell^{ab}\partial_a \partial_b |\ell-\de|^2
\end{split}
\end{equation*}
for all $t\in [0,\tau]$ 
if $\varepsilon(n)$ is sufficiently small. Thus by the maximum principle $|\ell-\de|^2 \leq \varepsilon(n)$ remains true as long as this is true on the boundary. Thus we can take $\tau = T$.

We perform a similar calculation  for $|\ell-\ell_0|^2$. By the above estimate, we can freely use that $\frac{1}{2} \de \leq \ell \leq 2 \de,$ 
for all $t\in [0,T]$. We also use, that $|D\ell_0| + |D^2\ell_0| \leq \epsilon(n)$ 
 due to the assumptions.
 \begin{equation*}
 \begin{split}
\partt |\ell-\ell_0|^2 & \leq  2 (\ell-\ell_0)^{ij} \ell^{ab}\partial_a \partial_b\ell_{ij}
+c(n) |\ell-\ell_0||D\ell|^2 \\
 &= 2 (\ell-\ell_0)^{ij} \ell^{ab}\partial_a \partial_b (\ell - \ell_0)_{ij}
+ 2 (\ell-\ell_0)^{ij}\ell^{ab}\partial_a \partial_b (\ell_0)_{ij} +c(n)\varepsilon(n)  |D \ell|^2  \\
& \leq \ell^{ab}\partial_a\partial_b |\ell-\ell_0|^2 -|D(\ell-\ell_0)|^2 +  \varepsilon(n) +c(n)\varepsilon(n) |D \ell |^2 \\
&\leq \ell^{ab}\partial_a \partial_b |\ell-\ell_0|^2-\frac{1}{2}|D\ell|^2 +c(n)\ep(n)
\end{split}
\end{equation*}
if $\varepsilon(n)$ is small enough.
Hence, 
\begin{equation*}
\partt \big( |\ell-\ell_0|^2  -c(n)\ep(n)t\big) \leq  \ell^{ab}\partial_a \partial_b \big(|\ell-\ell_0|^2-\ep(n)c(n)t\big),
\end{equation*}
and, consequently, 
\begin{equation*} 
|\ell-\ell_0|^2 \leq \beta +\ep(n)c(n)t
\end{equation*}
for $t \leq T$, in view of the fact that $|\ell-\ell_0|^2 \leq \beta \leq \beta + \ep(n)c(n)t $ on $\boundary \B_R(0)\times \{t\}$ for $t \leq T$,   and $|\ell-\ell_0|^2 =0$ for $t =0$ on $\overline{\B_R(0)}.$
\end{proof}

We now consider the problem of constructing solutions to the Dirichlet problem for the $\de$-Ricci-DeTurck flow, with boundary data $h$ given on the parabolic boundary $P$ of $B_R(0) \times (0,T)$. We will assume that the boundary data is given as the restriction of $h \in C^{\infty}( \overline{\B_R(0)} \times [0,T])$ which is $\ep(n)$ close in the $C^0$ norm to  $\de$ on $\overline{\B_R(0)} \times [0,T]$. We now explain how to construct a solution to this Dirichlet problem if the compatibility conditions of the first type are satisfied.
\begin{defn}\label{CompDefn}
Let $h \in C^{\infty}( \overline{\B_R(0)} \times [0,T])$ be a smooth family of Riemannian metrics on $\overline{\B_R(0)}.$  We say $h$ satisfies $\text{Comp}_k$, or {\it $h$ satisfies the compatibility conditions of the $k$-th order}, if
$$\frac{\partial^l}{\partial t^l} h(x,0) = L_l(h(x,0)) \qquad \text{for} \ l =1, \ldots, k$$
for all $x \in \partial \B_R(0),$ where $L_l$ is the differential operator of order $2l$ which one obtains by differentiating \eqref{eq:RicciDeTurckdelta} $l$-times with respect to $t$, and inserting iteratively the already obtained formulas for the $m$-th derivative in time for $m = 1, \ldots, l-1$. For example 
\begin{equation*}
\begin{split}
  L_1(h)_{ij}=\,h^{ab}\partial_a\partial_b h_{ij} +\tfrac12 h^{ab}h^{pq}\big(&\partial_i h_{pa}\partial_jh_{qb}
    +2\partial_ah_{jp}\partial_qh_{ib}\\
    &-2\partial_ah_{jp}
    \partial_bh_{iq}-2\partial_jh_{pa}\partial_bh_{iq}
    -2\partial_ih_{pa}\partial_bh_{jq}\big)\, .
\end{split}
\end{equation*}
and
$$L_2(h)_{ij} = -h^{ak}h^{bm}L_1(h)_{km}\partial_a \partial_b h _{ij} + 
 h^{ab} \partial_a \partial_b L_1(h)_{ij} + \ldots\, .$$
\end{defn}

We are now prepared to derive the following existence result.

\begin{thm}\label{Dirichlet}
Let $h \in  C^{\infty}( \overline{\B_R(0)} \times [0,T]),$ $T\leq 1$  and assume 
 $|h(\cdot,t) -\de| \leq \ep(n)$ for all $t \in [0,T],$ and $|h_0 - \de|_{C^{2,\al}(\B_R(0))} \leq \ep(n)$ and
 $h$ satisfies $\text{Comp}_1$. Then there exists a solution $ \ell \in C^0(\overline {\B_R(0)} \times [0,T] ) \cap H^{2+ \al, 1+ \frac {\al}{2} } (\overline{\B_R(0) }\times [0,T])$ to the $\de$-Ricci-DeTurck flow such that 
$\ell|_{P} =  h|_{P}$ on the parabolic boundary $P$. 
Furthermore, 
\begin{eqnarray}
|\ell(x,s)- h(x,s)|\leq    C(n,R,\al) K\big(s^{-1}, \|h\|_{C^{2,1}(\B_R(0) \times [s,T])}\big) (R-|x|)^{\al} \label{Hoelder}
\end{eqnarray}
 for all $s\in (0,T],$ $x \in B_R(0),$ for any given $\al \in (0,1)$, where $K: \R^+ \times \R^+ \to \R$ is a monotone increasing function with respect to each of its argument.

\end{thm}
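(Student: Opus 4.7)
The plan is to treat the Dirichlet problem for the system \eqref{eq:RicciDeTurckdelta} as a quasilinear strictly parabolic initial-boundary value problem, obtain classical solutions for boundary data of higher compatibility by a standard approximation, and then pass to the limit using the $C^0$ estimate of Lemma \ref{C0Lemma} together with parabolic Schauder theory.

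First, I would approximate $h$ by a sequence $h^{(k)} \in C^{\infty}(\overline{\B_R(0)} \times [0,T])$ such that (i) $h^{(k)}$ agrees with $h$ on the parabolic boundary $P$ outside a $1/k$-neighborhood of the corner $\partial\B_R(0) \times \{0\}$, (ii) $h^{(k)}$ satisfies $\mathrm{Comp}_j$ for every $j$, (iii) $|h^{(k)}(\cdot,t) - \de|_{C^0} \leq 2\ep(n)$ and $|h^{(k)}_0 - \de|_{C^{2,\al}} \leq 2\ep(n)$, (iv) $h^{(k)} \to h$ in $C^0(P)$ and in $C^{2,1}$ on $\B_R(0) \times [s,T]$ for every $s>0$. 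Such approximations are constructed by the classical device of correcting the Taylor expansion in $t$ at the initial slice, on a collar of $\partial\B_R(0)$, using the iteratively defined operators $L_j$ and space-time cut-off functions; this is standard parabolic regularization.

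Second, for each $h^{(k)}$, the full compatibility together with strict parabolicity (which holds since $h_0$ is $\ep(n)$-close to $\de$) allow me to invoke the classical existence theory for quasilinear parabolic systems (Ladyzhenskaya-Solonnikov-Uraltseva, Chapter VII) to obtain a smooth solution $\ell^{(k)}$ on some maximal interval $[0,T_k]$. Lemma \ref{C0Lemma} applied to $\ell^{(k)}$ gives $|\ell^{(k)} - \de|_{C^0} \leq c(n)\ep(n)$ as long as the solution exists, so strict parabolicity is preserved with uniform constants. Combined with standard parabolic Schauder estimates up to the boundary and a continuation argument, this yields $T_k = T$ and a priori bounds on $\ell^{(k)}$ in $H^{2+\al,1+\al/2}$ on any parabolic subdomain that stays away from the corner $\partial\B_R(0)\times\{0\}$.

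Third, I pass to the limit $k\to\infty$ using compactness: interior Schauder estimates on $\B_R(0) \times (0,T]$, boundary Schauder estimates on compact subsets of $\partial\B_R(0) \times (0,T]$ (where $h^{(k)}=h$ eventually), and Hölder estimates up to the initial slice $t=0$ (where $h^{(k)}_0 = h_0$ can be arranged). Together with the uniform $C^0$ bound this produces a limit $\ell \in C^0(\overline{\B_R(0)} \times [0,T]) \cap H^{2+\al,1+\al/2}_{\mathrm{loc}}$ solving \eqref{eq:RicciDeTurckdelta}, with $\ell|_P = h|_P$; the global $H^{2+\al,1+\al/2}$ statement then follows because $\mathrm{Comp}_1$ and the $C^{2,\al}$-closeness of $h_0$ to $\de$ are precisely what classical theory requires for $H^{2+\al,1+\al/2}$ regularity up to the initial slice.

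Finally, for the boundary Hölder decay \eqref{Hoelder}, fix $s \in (0,T]$ and set $w := \ell - h$, which vanishes on $\partial\B_R(0) \times [s,T]$. On $\B_R(0) \times [s,T]$, $w$ satisfies a linear uniformly parabolic system of the form $\partial_t w = \ell^{ab}\partial_a\partial_b w + B(\ell,h,D\ell,Dh)\cdot Dw + E$ with $E = L_1(h)-\partial_t h + $ lower order terms in $Dh$, with coefficients controlled by $\|h\|_{C^{2,1}(\B_R(0)\times [s,T])}$ and by the a priori bounds on $\ell$ (which in turn depend on $s^{-1}$ through parabolic smoothing). Classical linear parabolic boundary Hölder estimates (barrier functions of the form $\eta(R-|x|)^\al$, e.g.~as in Lieberman's treatment) then yield \eqref{Hoelder}. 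The main technical obstacle is the approximation step: ensuring that one can modify $h$ near the corner to satisfy all compatibility conditions while preserving both the $C^0$-closeness to $\de$ and uniform control of $h^{(k)}$ in norms strong enough to pass to the limit --- this is resolved by the classical cut-off correction procedure, and the loss of control near the corner is precisely reflected in the fact that our conclusion only asserts $C^0$ convergence to $h$ on $P$, quantified by \eqref{Hoelder}.
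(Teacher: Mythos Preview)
Your existence argument takes an unnecessarily indirect route. Since $h$ already satisfies $\text{Comp}_1$, the approximation by $h^{(k)}$ with full compatibility is superfluous here (that device is exactly what is used later, in Theorem \ref{Dirichlet3}, when compatibility fails). The paper instead works directly: it writes the equation for $S:=\ell-h$, which has zero parabolic boundary data, in the quasilinear form $\partial_t S = a^{ij}(S,x,t)\partial_i\partial_j S + b(S,DS,x,t)$, and then rescales $\tilde S := \delta(n)^{-1} S$ with $\ep(n)\ll\delta(n)$. The point of this rescaling is that the quadratic gradient nonlinearity in $\tilde b$ now carries the small coefficient $\delta(n)$, so the structure conditions of \cite[Chapter VII, Theorem 7.1]{LSU} are met and one obtains $\tilde S\in H^{2+\alpha,1+\alpha/2}$ in one stroke; Lemma \ref{C0Lemma} guarantees $|\tilde S|\leq\sqrt{\ep(n)}$ throughout, so the equation stays uniformly parabolic. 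Your scheme could in principle be made to work, but you would have to verify uniform $H^{2+\alpha,1+\alpha/2}$ bounds for the $\ell^{(k)}$ up to the corner, which is precisely the issue the direct approach sidesteps.

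For the boundary H\"older decay you are on the right track but vague. The paper carries out an explicit barrier argument: with $\rho(x)=R-|x|$ and a time cutoff $\eta$ supported in $[s,T]$, one checks that $\phi_\pm := \pm(\ell_{qr}-h_{qr})+\lambda|\ell-h|^2$ satisfies $(\partial_t-\ell^{ij}\partial_i\partial_j)\phi_\pm \leq C(n,\|h\|_{C^{2,1}([s,T])})$ for $\lambda=\lambda(n)$ large, and that $\ell^{ij}\partial_i\partial_j\rho^\alpha \leq -1 - \tfrac{\alpha(1-\alpha)}{2}\rho^{\alpha-2}$ near $\partial\B_R(0)$. Then $\psi_\pm := \eta(t)\phi_\pm - 2M\rho^\alpha$ is shown to stay negative on a boundary annulus by the maximum principle, for $M=M(\alpha,s,h,n)$ large enough. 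This yields \eqref{Hoelder} directly, with the dependence on $s^{-1}$ entering through $\eta'$.
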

\begin{proof}
 The first part of the proof follows closely the proof of the existence result given in \cite[Chapter 3]{Shi}.
Assume for the moment that a solution $\ell$ exists, and set $S:= \ell- h$.
Then $S=0$ on the parabolic boundary, and
the evolution equation for $S$ is: 
\begin{equation*}
\begin{split}
 \partt S(x,t)  &= \Big(\partt \ell  -\partt h\Big)(x,t) \\  
&= \Big(\ell^{ij}\partial_i \partial_j \ell + \ell ^{-1}*  \ell ^{-1} * D \ell * D \ell  -\partt h\Big)(x,t)\\
&= \Big(\ell^{ij}\partial_i\partial_j S + \ell^{ij}\partial_i \partial_j h + \ell ^{-1}*  \ell ^{-1} * D \ell * D \ell  -\partt h\Big)(x,t)\\
&= a^{ij}(S(x,t),x,t) \partial_i \partial_j S(x,t) + b(S(x,t), D S(x,t),x,t)
\end{split}
\end{equation*}
where $a^{kl}(z,x,t) := (h(x,t) + z)^{kl}$ is the inverse of $h(x,t) +z$ (which is well defined as long as $h(x,t) +z$ is invertible) and $b$ is defined similarly
\begin{equation*}
\begin{split}
b(z,p,x,t) &:= (h(x,t) +z)^{-1}* (h(x,t) +z)^{-1} *  (p + D  h(x,t) ) * (p + D h(x,t) ))\\
&\ \ \ -\partt h(x,t) + (h(x,t) + z)^{kl} \partial_k \partial_l h(x,t),
\end{split}
\end{equation*}
where this again is well defined as long as $z + h(x,t)$  is invertible. Note that if $|z|$ is sufficiently small then  $z + h(x,t)$  is invertible.
Since $\ell$ is assumed to be a solution, and $\ell = h$ on $P$, where $|h-\de|_{C^0} \leq \ep(n),$ and $|\ell_0 -\de|_{C^2} = |h_0-\de|_{C^2} \leq \ep(n)$,  we obtain that
$| \ell(\cdot,t) -\ell_0|_{C^0(\B_R(0)) } \leq \ep(n)$ for all $t\in [0,T]$ in view of 
Lemma \ref{C0Lemma}, and hence $|S(t)|_{C^0(\B_R(0)) } \leq \ep(n)$ for all $t\in [0,T]$.

We divide $S$ by a small number $\de(n)>0$, and call it $\ti S$, i.e. $$\ti S := \de^{-1}(n)S,\, $$
where we assume $\ep(n) \ll \de(n),$ for example we choose $\ep(n) = \de^3(n)$.
Hence  $|\ti S(t)|_{C^0(\B_R(0))}$ is still small for all times $t\in[0,T]$, and we  
have $|\ti S| \leq \sqrt{\ep(n)} $.
The evolution of $\ti S$ may be written as
\begin{equation}\label{gen}
 \partt \ti S(x,t) = {\ti a}^{ij}(\ti S (x,t),x,t )\partial_i \partial_j \ti S(x,t) + \ti b(\ti S(x,t), D \ti S(x,t), x,t),
 \end{equation} 
where  ${\ti a}^{ij}(\ti z,x,t) = (h(x,t) + \de \ti z)^{ij} $ is the inverse of $h(x,t) +\de \ti z,$ and
\begin{equation*}
\begin{split}
\ti b(\ti z,\ti p,x,t) &:= \de (h(x,t) + \de \ti z)^{-1}* (h(x,t) + \de \ti z)^{-1} * ( \ti p +  \de^{-1} D h(x,t)  ) * ( \ti p +  \de^{-1} D h(x,t)   )  )\\
&\ \ \ -\frac{1}{\de}\partt h(x,t) + (h(x,t) + \de \ti z)^{kl} \de^{-1} \partial_k \partial_l h(x,t),
\end{split}
\end{equation*} 
In the setting we are examining, we see,  defining  $C_1(h,n) :=  10 |h|_{C^{2,1}(\B_R(0) \times [0,1])} $, that 
 $$|\ti b( \ti z,\ti p, x,t) | \leq  \de(n) ( C_1(h)\de^{-2}(n) + |\ti p|^2 ) $$ for the $\ti z= \ti S(x,t)$ we are considering,  since   $|\ti S|_{C^0} \leq \sqrt{\ep(n)}$. 
 As long as $|\ti S(\cdot,t)|_{C^0} \leq \sqrt{\ep(n)}$ for $t\in [0,1]$, we have  
$2\de_{ij} \geq \ti a^{ij}(\ti S (x,t),x,t ) \geq \frac 1 2 \de_{ij}$ and 
$|\ti b( \ti S(x,t), D \ti S(x,t),x,t) | \leq \de(n) (C_1(h) \de^{-2}(n)  +  |D\ti S(x,t)|^2) $ in this  case.
We write this as 
$$ |\ti b( \ti S(x,t), D \ti S(x,t),x,t) | \leq   Q(|D \ti S(x,t)|, |\ti S(x,t)|)(1+ |D \ti S(x,t)|^2) $$
where $Q$ is a smooth function, with  $Q(|p|,|u|) =  2\de(n) \eta (|p| ) + 2\de(n)( 1- \eta(|p|) (1+100C_1(h)\de^{-2}(n))$ 
where $\eta$ is a smooth cut-off function with $\eta(r) =0$ for all $r \leq 100 \, C_1(h) \de^{-2}(n)$ and $ \eta(r) = 1$
for all $r \geq 200\, C(h,n) \de^{-1}(n)$.

Hence, from the general theory of non-linear parabolic equations of second order, see for example  \cite[Theorem 7.1, Chapter VII]{LSU}, we see that equation \eqref{gen} with zero parabolic boundary values has a solution $\ell \in H^{2+\al,1 + \al/2}(\overline{\B_R(0)}\times [0,T])$
for all times $t\in [0,T]$, as long as $|\ti S(\cdot,t)|_{C^0} \leq \sqrt{\ep(n)}$ for $t\in [0,T]$.

 Writing $\ell =h + \de \ti S$ and using the arguments above, we see that this will not be violated for $t\in [0,T]$, $T\leq 1$, and that $\ell$ solves the $\de$-Ricci-DeTurck equation and $\ell|_P = h|_P$.

%Furthermore, we see that   [Lemma $6.1$, Chapter VII, \cite{LSU}] is applicable, %which then tells us
%that $|D \ti S||_{\boundary B_R(0) \times [0,1]} \leq $
%In particular, 
%$|\ti S|_{ C^0(   (\overline{B_R(0)} \backslash B_{R-\si}(0)) \times [0,1]) } \leq
%\be(h,R,n,\si)$
%with  $\be(h,R,n,\si) \to 0$ as $\si \downto 0$ for fixed $h,R,n$, which implies 
%$|\ell -h|_{ C^0(  ( \overline{B_R(0)} \backslash B_{R-\si}(0) ) \times [0,1]) } \leq %\be(h,S,n,\si)$, with
%$\be(h,R,n,\si) \to 0$ as $\si \downto 0$ for fixed $h,R,n$, as claimed.
This proves the existence of the solution. It remains to prove the H\"older boundary estimate, \eqref{Hoelder}. For ease of reading, we assume $R=1$.

%The interior H\"older estimate can be improved by establishing a gradient bound on the solution $\ell$. Indeed, let us consider the %function $\psi(\cdot,t):=\eta(t-s) |D \ell(t)|^2  (1 + \la(n)|\ell(t) -\de|^2) $ where now $\eta$ is a radial cut off function in space %with compact support in $\B_{1-\delta_0/2}(0)$ such that $\eta\equiv 1$ on $\B_{1-\delta_0}(0)$ where $\delta_0\in(0,1)$,  and $|D \eta %|^2 \leq c(n,1-\delta_0) \eta$, $|D^2\eta|\leq c(n,1-\delta_0)$. Define $\la(n) := 1/\ep(n)$. 

 %We observe that $\psi(\cdot,s) = 0$  and $\psi$ is compactly supported in $\B_{1-\delta_0}(0)$. Now, 
%\begin{eqnarray}
%\eta\partt \psi \leq \ell^{ij}\partial_i \partial_j \psi -\ell^{ij}\partial_i\eta\partial_j\psi+c(n,1-\delta_0)\psi
%-\frac{\psi^2}{(t-s)} +\frac{\psi}{ (t-s)},\label{evo-equ-psi}
%\end{eqnarray}
% if $\ep(n)$ is sufficiently small: this computation is very similar to [(\ref{evo-equ-Z-cut-off}), Theorem \ref{reg_harm}].

%A straightforward application of the maximum principle leads to the desired estimate: $$|D \ell|^2\leq \frac{c(n,1-\delta_0)}{t-s},$$ on %$\B_{1-\delta_0}(0)\times(s,1]$.

Consider  for $q,r$ fixed the functions 
$$\phi_+:= \big(  (\ell_{qr} -h_{qr}) + \lambda |\ell-h|^2 \big) \ \text{ and }\ \phi_{-}:= \big( - (\ell_{qr} -h_{qr}) + \lambda |\ell-h|^2 \big),$$ where $\lambda=\lambda(n)$ is a sufficiently large constant such that 
$$\partial_t\phi_\pm-\ell^{ij}\partial_i \partial_j \phi_\pm \leq C(n,h|_{[s,T]}), $$ 
on $\B_1(0)  \times [s,T]$ where $C(n,h) =  C(n,\norm{  h }_{ C^{2,1}( \B_1(0) \times [s,T])    })$.

We consider the functions 
$$\psi_+ := \eta(t)\phi_{+} - 2M\rho^{\al} \ \text{ and }\  \psi_{-} := \eta(t)\phi_{-}- 2M\rho^{\al}\, ,$$ for some $0<\al<1$, 
where $\rho(x) = (1-|x|)$, and $\eta$ is a non-negative cut off function in time with $\eta(t) = 0$ for $0\leq t\leq s$ and  $\eta(t) = 1$ for $3s/2 \leq t$ such that $|\eta'(t)|^2\leq cs^{-2}\eta(t)$ for some positive constant $c$.
A direct calculation shows 
\begin{equation*}
\begin{split}
\ell^{ij}\partial_i \partial_j \rho^{\al} & = \ell^{ij} \partial_i \left(\al \rho^{\al-1} \frac{(-x_j)}{|x|}\right) \cr
& =  (\al-1)\al\rho^{\al-2}  \ell^{ij} \frac{x_i x_j }{|x|^2} - \al \rho^{\al-1} \frac{\ell^{ii}}{|x|} +  \al \rho^{\al-1}\frac{\ell^{ij}x_i x_j}{|x|^3}  \cr
&\leq - \frac{\al(n-1)}{2}   \frac{\rho^{\al-1}}{|x|}- \frac{\al |1-\al|}{2} \rho^{\al-2} \\
&\leq -1 - \frac{\al |1-\al|}{2} \rho^{\al-2}
\end{split}
\end{equation*} 
for all $|x|\in (1- \de_0(\al,n),1)$.
We note that $\psi_\pm$ cannot be zero very close to  $\boundary \B_R(0)$, since
$|D (\ell-h)|$  is bounded by {\it some} constant according to \cite[Theorem 7.1, Chapter VII]{LSU}, and $\ell-h =0$ on $\boundary \B_R(0)$.
Also, by choosing $M=M(\al)$ large enough, we have without loss of generality, that
 $\psi_\pm(x,\cdot) <0$ for $|x| = 1-\de_0(\al,n).$
 That is $\psi_\pm(x,\cdot) <0$ for $|x| =1-\de_0(\al,n)$ and $|x|=  1- \ep$
for all $\ep>0$, $\ep\ll \de_0$  sufficiently small.
We also observe that    $\psi_\pm(\cdot,s) < 0$ for all $|x| \in [1-\de_0(\al,n),1-\ep]$
 for  $t\leq s$. Hence   if $\psi_\pm(x,t) = 0$ for some $|x| \in [1-\de_0(\al,n),1-\ep]$ for some $t\geq s$, there must be a first time $t$ for which this happens and 
this must happen at an interior point $ x $ of $\B_{1-\ep}(0) \backslash \overline{\B_{1-\de_0}(0)}$. 
We calculate at such a point $(x,t)$,
\begin{equation*}
\begin{split}
\partt \psi_\pm  & \leq  \ell^{ij}\partial_i \partial_j \psi_\pm  +C(n,h) + \eta' \phi_\pm   -2M - M \al |1-\al| \rho^{\al-2}  \cr
&  \leq  \ell^{ij}\partial_i \partial_j  \psi_\pm  + C(n,h) + \frac{c(n)}{s}|\eta\phi_\pm |^{\frac 1 2}   -2M  - M \al |1-\al| \rho^{\al-2} \cr
&\leq  \ell^{ij}\partial_i \partial_j  \psi_\pm  + C(n,h) -2M+ \frac{c(n)}{s}(2M)^{\frac 1 2} \rho^{\frac {\al}{2}}-  M \al |1-\al| \rho^{\al-2}  <0\, ,
\end{split}
\end{equation*}
for $|x| \in (1-\de_0,1-\ep),$ if $M> C( C(n,h),\al,s) $ also holds.
A contradiction.
This leads to the desired  estimate close to the boundary. For points $|x|\in (0, 1-\de_0)$ the estimate follows immediately from the fact that $\ell$ and $h $ are $\ep(n)$ close to $\de$ and hence bounded, and $(1-|x|)^{\al} \geq \de_0^{\al}$  for $|x| \in  (0, 1-\de_0)
.$ \end{proof}

If we assume that higher order compatibility conditions are satisfied, we obtain more regularity of the solution.
\begin{thm}\label{Dirichlet2}
Let $h \in  C^{\infty}( \overline{\B_R(0)} \times [0,T]),$ $T\leq 1$ and assume 
 $|h(\cdot,t) -\de| \leq \ep(n)$ for all $t \in [0,T],$ and $|h_0 - \de|_{C^{2,\al}(\B_R(0))} \leq \ep(n)$ and
 $h$ satisfies $\text{Comp}_k$. 
Then there exists a solution $ \ell \in C^0(\overline {\B_R(0)} \times [0,T] ) \cap H^{k+ \al, \frac{k}{2}+ \frac {\al}{2} } (\overline{\B_R(0)} \times [0,T])$ to the $\de$-Ricci-DeTurck flow and 
the values given by $h$ on the parabolic boundary, that is 
$\ell|_{P} =  h|_{P}$. 
Furthermore, 
\begin{equation}
|\ell(x,s)- h(x,s)|\leq    C(n,R,\al) K\big(s^{-1}, \|h\|_{C^{2,1}(\B_R(0) \times [s,T])}\big) (R-|x|)^{\al} 
\end{equation}
 for all $s\in (0,T],$ $x \in B_R(0),$ for any given $\al \in (0,1)$, where $K: \R^+ \times \R^+ \to \R$ is a monotone increasing function with respect to each of its argument. 
\end{thm}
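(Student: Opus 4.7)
The plan is to reduce the problem to the $S = \ell - h$ formulation used in the proof of Theorem \ref{Dirichlet}, to interpret the hypothesis $\text{Comp}_k$ on $h$ as a corner compatibility condition of the correct order for the reduced quasilinear Dirichlet problem solved by $S$, and then to invoke higher-order quasilinear parabolic Schauder theory in place of the $H^{2+\alpha,1+\alpha/2}$ theory that was used before.

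First I would repeat verbatim the reduction carried out in the proof of Theorem \ref{Dirichlet} to arrive at the quasilinear system
\[
\partial_t S = a^{ij}(S,x,t)\,\partial_i \partial_j S + b(S, DS, x, t), \qquad S|_P = 0,
\]
together with the a priori $C^0$-smallness of $\ell - h$ furnished by Lemma \ref{C0Lemma}. Existence of an $H^{2+\alpha,1+\alpha/2}$-solution, together with the boundary Hölder estimate \eqref{Hoelder}, already comes directly from Theorem \ref{Dirichlet}; the only new content is to bootstrap the regularity up to $H^{k+\alpha,k/2+\alpha/2}$.

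Second, I would check that $\text{Comp}_k$ for $h$ is exactly the compatibility condition of the required order at the parabolic corner $\boundary \B_R(0) \times \{0\}$ for the $S$-equation. Since $\ell$ satisfies $\partial_t \ell = L_1(\ell)$, iteratively differentiating this identity in $t$ and substituting the previously-obtained expressions for lower-order time derivatives reproduces precisely the operators $L_l$ of Definition \ref{CompDefn}. Because $\ell = h$ on $P$, all tangential spatial derivatives of $\ell$ and $h$ along $\boundary \B_R(0)$ agree at $t = 0$, so $\partial_t^l \ell = L_l(\ell) = L_l(h) = \partial_t^l h$ on $\boundary \B_R(0) \times \{0\}$ for each $1 \leq l \leq k$, and hence $\partial_t^l S \equiv 0$ there for all $0 \leq l \leq k$.

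With these corner compatibility conditions established, together with uniform parabolicity of $a^{ij}$ (ensured by the $C^0$-smallness of $S$), smoothness of the coefficients $a^{ij}$ and $b$ in their arguments, and smoothness of the prescribed parabolic boundary data $h|_P$, the classical higher-order Schauder theory for quasilinear parabolic systems (see for instance \cite[Chapter VII]{LSU}) yields $S \in H^{k+\alpha, k/2+\alpha/2}(\overline{\B_R(0)} \times [0,T])$, and hence $\ell = h + S$ enjoys the same regularity; the boundary Hölder estimate is inherited unchanged from Theorem \ref{Dirichlet}. The main obstacle I expect is precisely the combinatorial verification in the second step: because each operator $L_l$ is nonlinear and involves spatial derivatives of its argument up to order $2l$, one has to differentiate tangentially along $\boundary \B_R(0)$ and track carefully how the equalities $\partial_t^j \ell = \partial_t^j h$ for $j < l$ propagate to an equality $L_l(\ell) = L_l(h)$ at the corner; this bookkeeping is mechanical but must be done cleanly to justify that $\text{Comp}_k$ on $h$ does translate into the full set of $k$-th order compatibility conditions needed for the Schauder theory to apply.
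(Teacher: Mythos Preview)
Your approach is the same as the paper's, which dispatches the result in one sentence: repeat the proof of Theorem~\ref{Dirichlet}, noting that since $h$ satisfies $\text{Comp}_k$ the rescaled difference $\tilde S$ satisfies the $k$-th order compatibility conditions, so the existence theory of \cite[Chapter~VII]{LSU} yields a solution directly in $H^{k+\alpha,\,k/2+\alpha/2}$ rather than only $H^{2+\alpha,\,1+\alpha/2}$; the boundary H\"older estimate carries over unchanged.

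Your compatibility verification in the second step is, however, more involved than necessary, and the ``tangential spatial derivatives'' reasoning is misleading. The operators $L_l$ involve spatial derivatives of all directions up to order $2l$, so agreement of tangential derivatives along $\partial\B_R(0)$ alone would not suffice to conclude $L_l(\ell)=L_l(h)$ at the corner. The correct (and simpler) observation is that $S_0=\ell_0-h_0\equiv 0$ on the \emph{entire} ball $\B_R(0)$, not just on its boundary; consequently the formal time derivatives of $S$ at $t=0$, computed by iterated substitution of the equation into itself, equal $L_l(h_0)-\partial_t^l h(\cdot,0)$ throughout $\overline{\B_R(0)}$. The hypothesis $\text{Comp}_k$ is precisely the statement that these vanish on $\partial\B_R(0)$, so the order-$k$ compatibility for the $S$-problem with zero parabolic boundary data is immediate --- the combinatorial obstacle you anticipate at the end does not arise.
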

\begin{proof}
The proof is the same, except at the step where we used \cite[Theorem 7.1, Chapter VII]{LSU} to obtain a solution in $H^{2+\al,1 + \frac{\al}{2}},$  we now obtain a solution $\ell \in 
H^{k+\al,k + \frac{\al}{2}},$ in view of the fact that the $\ti S$ satisfies the compatibility condition of $k$-th order.
\end{proof}

We now explain how to construct a $\de$-Ricci-DeTurck  flow for  parabolic boundary values given by $h$ which do not necessarily satisfy compatibility conditions of the first order, but are smooth at $t=0$, 
smooth on $\overline{\B_R(0) }\times (0,T]$ and continuous on $\overline{\B_R(0)} \times [0,T]$. This is done by modifying the boundary values, so that the first (or higher  order) compatibility conditions are satisfied, and then taking a limit.

\begin{thm}\label{Dirichlet3}
Let $h \in  C^{\infty}(\overline{\B_R(0)} \times (0,T]) \,\cap\, C^0( \overline{\B_R(0)} \times [0,T])$, $T\leq 1$,  such that $h(\cdot,0) \in C^{\infty}(\overline{\B_R(0)})$ and assume 
 $|h(\cdot,t) -\de| \leq \ep(n)$ for all $t \in [0,T],$ and $|h_0 - \de|_{C^{2,\al}(\B_R(0))} \leq \ep(n)$. 
Then there exists a solution 
$$ \ell \in C^0(\overline {\B_R(0)} \times [0,T] ) \cap C^{\infty}(\B_R(0) \times (0,T])$$  
to the $\de$-Ricci-DeTurck flow and 
the values given by $h$ on the parabolic boundary $P$, that is 
$$\ell|_{P} =  h|_{P}\, .$$ 
Furthermore 
$$|\ell|_{C^s(\B_{R'}(0) \times [0,T])} \leq c\big(|R-R'|, |h_0|_{C^{s}(\B_{R'}(0))}\big)$$ 
for any $R' <R$ and any $s  \in \N$.
\end{thm}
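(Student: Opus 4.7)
The plan is to approximate the given boundary data $h$ by a sequence $h_\alpha$ of smooth data satisfying $\text{Comp}_k$ for all $k$, apply Theorem \ref{Dirichlet2} to obtain solutions $\ell_\alpha$, and pass to a limit. First I would construct $h_\alpha$ as follows: fix a smooth cutoff $\chi_\alpha:[0,T]\to[0,1]$ equal to $1$ for $t\leq\alpha$ and $0$ for $t\geq 2\alpha$, and on a parabolic collar $U_\alpha := \{x:R-|x|<\alpha\}\times[0,3\alpha]$ replace $h$ by a Taylor polynomial in $t$ whose $k$-th time derivatives at $t=0$ are exactly $L_k(h_0)$ (the operators from Definition \ref{CompDefn}), glued to $h$ using $\chi_\alpha$ and a spatial cutoff. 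Since $h_0=h(\cdot,0)$ is smooth on $\overline{\B_R(0)}$ and $h$ is uniformly continuous on $\overline{\B_R(0)}\times[0,T]$, one arranges that $h_\alpha\in C^\infty(\overline{\B_R(0)}\times[0,T])$ satisfies all $\text{Comp}_k$, agrees with $h_0$ at $t=0$, $|h_\alpha-\de|\leq 2\ep(n)$ everywhere, and $h_\alpha\to h$ uniformly on $\overline{\B_R(0)}\times[0,T]$; moreover $h_\alpha=h$ on any fixed compact subset of $\overline{\B_R(0)}\times(0,T]$ once $\alpha$ is small.

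By Theorem \ref{Dirichlet2} (taking $k$ large), each $h_\alpha$ produces a smooth solution $\ell_\alpha\in C^\infty(\overline{\B_R(0)}\times[0,T])$ to the $\de$-Ricci-DeTurck flow with $\ell_\alpha|_P=h_\alpha|_P$. Lemma \ref{C0Lemma} then gives the uniform bound $\|\ell_\alpha-\de\|_{L^\infty}\leq c(n)\ep(n)$ on $\overline{\B_R(0)}\times[0,T]$ independently of $\alpha$. Hence by the interior regularity estimates for Ricci-DeTurck (\cite[Lemma 4.2]{MilesC0paper}), on any compact $K\Subset \B_R(0)\times(0,T]$ we have uniform bounds $|D^j\ell_\alpha|\leq C(j,K)$. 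Combining with parabolic Schauder estimates applied at $t=0$ on strictly interior balls, using that $\ell_\alpha(\cdot,0)=h_0$ is smooth and $\alpha$-independent, we obtain, for any $R'<R$ and any $s\in\N$, bounds $\|\ell_\alpha\|_{C^s(\B_{R'}(0)\times[0,T])}\leq c(R-R',\|h_0\|_{C^s})$. By Arzel\`a--Ascoli and a diagonal argument, a subsequence of $\ell_\alpha$ converges in $C^\infty_{\mathrm{loc}}(\B_R(0)\times(0,T])$, and up to $t=0$ on every strictly interior ball, to a limit $\ell\in C^\infty(\B_R(0)\times(0,T])$ solving the $\de$-Ricci-DeTurck flow with $\ell(\cdot,0)=h_0$ and satisfying the claimed interior $C^s$ bounds.

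The main obstacle is to show that $\ell$ extends continuously to $\overline{\B_R(0)}\times[0,T]$ and matches $h$ on the parabolic boundary, in particular at the corner $\partial\B_R(0)\times\{0\}$. For interior boundary points $(x_0,t_0)$ with $|x_0|=R$ and $t_0>0$, this follows from the H\"older estimate \eqref{Hoelder} of Theorem \ref{Dirichlet2}: choosing $s=t_0/2$, the constants in $|\ell_\alpha-h_\alpha|\leq CK(s^{-1},\|h_\alpha\|_{C^{2,1}})(R-|x|)^\alpha$ are $\alpha$-uniform for $\alpha$ small (since $h_\alpha=h$ on $\overline{\B_R(0)}\times[s,T]$ eventually), and we pass to the limit together with the uniform continuity of $h$. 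For the corner points $(x_0,0)$ with $|x_0|=R$, the H\"older constant degenerates, so I would instead apply Lemma \ref{C0Lemma} to $\ell_\alpha$ with $\ell_0:=h_0$ and
\[
\beta_\alpha(t):=\sup_{\tau\in[0,t]}\|h_\alpha(\cdot,\tau)-h_0\|_{L^\infty(\partial\B_R(0))},
\]
yielding $\|\ell_\alpha(\cdot,t)-h_0\|_{L^\infty}\leq\beta_\alpha(t)+\ep(n)t$. Because $h$ is uniformly continuous and $h_\alpha\to h$ uniformly, $\beta_\alpha(t)\to 0$ as $t\to 0$ uniformly in $\alpha$. Taking limits, $\|\ell(\cdot,t)-h_0\|_{L^\infty(\B_R(0))}\to 0$ as $t\to 0$, and combined with continuity of $h_0$ we get $\ell(x,t)\to h_0(x_0)=h(x_0,0)$ as $(x,t)\to(x_0,0)$ inside $\B_R(0)\times(0,T]$. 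This, together with the interior boundary continuity established above, assembles into continuity of $\ell$ on all of $\overline{\B_R(0)}\times[0,T]$ and the matching $\ell|_P=h|_P$.

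The hardest step in practice is verifying the $\alpha$-independent interior $C^s$ bounds up to $t=0$; the point is that the initial datum is the fixed smooth metric $h_0$, so the Schauder theory applies on any $\B_{R'}(0)$ using only $\|h_0\|_{C^s(\B_{R''}(0))}$ for some $R'<R''<R$ and the uniform $L^\infty$ bound from Lemma \ref{C0Lemma}; the parabolic boundary values $h_\alpha$ contribute only away from a neighborhood of the initial interior, so they do not enter the estimate. Everything else is either a direct application of results already established in the paper or a standard parabolic limiting argument.
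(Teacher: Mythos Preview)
Your overall strategy is the same as the paper's: modify $h$ to satisfy the compatibility conditions, apply Theorem~\ref{Dirichlet2} to obtain approximating solutions, use Lemma~\ref{C0Lemma} for uniform $C^0$ control near $t=0$, use the H\"older boundary estimate \eqref{Hoelder} for uniform control near $\partial\B_R(0)\times(0,T]$, cite the interior estimates of \cite[Section~4]{MilesC0paper} for the $C^s$ bounds on $\B_{R'}(0)\times[0,T]$, and pass to a limit. Your treatment of the corner $\partial\B_R(0)\times\{0\}$ via Lemma~\ref{C0Lemma} with a $\tau$-uniform modulus $\beta_\alpha(t)$ is exactly the paper's argument.

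There is one genuine flaw in your construction of $h_\alpha$. You localize the modification to a spatial collar $\{R-|x|<\alpha\}\times[0,3\alpha]$; outside this collar you leave $h_\alpha=h$. But $h$ is only $C^0$ in time at $t=0$, so your $h_\alpha$ fails to lie in $C^\infty(\overline{\B_R(0)}\times[0,T])$ and Theorem~\ref{Dirichlet2} does not apply. The compatibility conditions are a boundary condition, but the hypothesis of Theorem~\ref{Dirichlet2} requires global smoothness of the prescribed data. The paper avoids this by modifying $h$ purely in time, for \emph{all} $x$: with a cutoff $\xi$ equal to $1$ on $[0,\tfrac12]$ and $0$ on $[1,\infty)$, set
\[
h(\tau)(x,t)=\xi(t/\tau)\,h_0(x)+\big(1-\xi(t/\tau)\big)\,h(x,t)+t\,\xi(t/\tau)\,L_1(h_0)(x)
\]
for $t\in[0,\tau]$ and $h(\tau)=h$ for $t\ge\tau$ (and iteratively add $\tfrac{t^2}{2}L_2(h_0)$, etc., for higher $\text{Comp}_k$). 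Since the coefficient $1-\xi(t/\tau)$ of the non-smooth term $h(x,t)$ vanishes identically for $t\le\tau/2$, this $h(\tau)$ is smooth on $\overline{\B_R(0)}\times[0,T]$. Drop the spatial cutoff and use this purely temporal blend; the rest of your argument then goes through as written.
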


\begin{proof}

Let $\xi: \R \rightarrow \R$ be a monotone non-increasing smooth function whose image is contained in $[0,1]$, such that $\xi$ is equal to $1$ on $[0,\frac 1 2]$ and equal to $0$ on $[1, \infty)$.

For each $\tau \in [0,1]$, let $h(\tau)$  be the smooth Riemannian metric defined as follows:
\begin{equation*}
\begin{split}
h(\tau)|_{ \B_R(0)  \times [\tau,T]} &:= h ,\\
 h(\tau)|_{ \B_R(0)  \times [0,\tau]} &:=  \xi\left(\frac{t}{\tau}\right)h_0(x)+ \left(1- \xi\left(\frac{t}{\tau}\right)\right)h(x,t) +  t\xi\left(\frac{t}{\tau}\right) L_1(h_0)(x),
 \end{split}
 \end{equation*}
   where  
$L_1$ is as in Definition \ref{CompDefn}, i.e.
\begin{equation*}
L_1(h_0) :=(h_0)^{ij} \partial_i \partial_j h_0+(h_0)^{-1} * (h_0)^{-1} * D h_0 * D h_0.
\end{equation*}
Using $|h(t)-\de|\leq \ep(n)$ and $|L_1(h_0)| \leq c(n)$, we see that
$ |h(\tau)(t)-\de| \leq 2\varepsilon(n) = \varepsilon(n)$ if $\tau$ is sufficiently small. 
Furthermore,  
$$ \partt h(\tau)(x,0) = L_1(h_0)(x,0)  = L_1(h(\tau))(x,0),$$ that is $h(\tau)$ satisfies $\text{Comp}_1$.
We may hence use Theorem \ref{Dirichlet} to obtain a solution $\ell(\tau) \in H^{2+ \al, 1 +\frac{\al}{2}}(\overline{\B_R(0)} \times [0,T] )$ to the $\delta$-Ricci-DeTurck flow with parabolic boundary data given by $h(\tau)$.
From the definition of $h(\tau)$, we have on $\boundary \B_R(0)$ that 
\begin{equation*}
\begin{split}
|\ell(\tau)(\cdot,t) - h_0)|  &= |h(\tau)(\cdot,t)- h_0(\cdot)|\cr
&  \leq |h(\cdot,t)-h_0(\cdot)| + t \ep(n)\cr
& \leq C(t,h,n)
\end{split}
\end{equation*}
where $C(t,h,n)$ is a function (independent of $\tau$) such that $C(t,h,n) \leq \varepsilon(n)$ and
$C(t,h,n) \to 0$ as $t\downto 0$ for $n$ and $h$  fixed.
 Lemma \ref{C0Lemma} then tells us that 
$|\ell(\tau)(\cdot,t)  - h_0)| \leq \hat C(t,h,n)$ on {\it all} of $\overline{\B_R(0)}$ for all 
$t \in [0,T]$  where  $\hat C(t,h,n) \to 0$ as $t\downto 0$ for $n$  and $h$ fixed (independent of $\tau$).
The boundary H\"older  estimate \eqref{Hoelder} of Theorem \ref{Dirichlet}, and 
the smoothness of $h = h(\tau)$ for $t \geq 2\tau$, also tells us, for any $\ep>0$ and $s>0$, there exists a $\si>0$, such that 
$|\ell(\tau)(x,t)  - h(x,t)| \leq \ep$ for all $x \in \B_R(0) \backslash \B_{R-\si}(0) $ for  all $t\in [s,T]$, where $\si= \si(\ep,s,h,n) >0$ is  independent of $\tau$ if $\tau $ is sufficiently small.
These two $C^0$ estimates imply that we have the   
uniform (in $\tau$) $C^0$ bounds
$$|\ell(\tau)(t) - h(t)|_{P_{\ep}} \leq C(\ep,h,n)$$ 
where $C(\ep,h,n) \to 0$ as $\ep \to 0$ (for fixed $h$ and $n$ ) and $P_{\ep} = \overline{\B_R(0)} \backslash \B_{R-\ep}(0) \times [0,1] \cup \overline{\B_R(0)} \times [0,\ep]$ for $\tau$ sufficiently small.

If we define 
$$h(\tau) := \xi(t/\tau)h_0(x)+ \left(1- \xi(t/\tau)\right)h(x,t) +  t\xi(t/\tau) L_1(h_0)(x) 
+\frac{t^2}{2} L_2(h_0),$$ 
then we still have 
$|\ell(\tau)(\cdot,t) - h_0)| \leq C(t,h,n)$ and hence 
$|\ell(\tau)(\cdot,t)  - h_0)| \leq \hat C(t,h,n)$ on {\it all} of $\overline{\B_R(0)}$ for all 
$t \in [0,T]$  where  $\hat C(t,h,n) \to 0$  independent of $\tau$ if $\tau $ is sufficiently small, in view of Lemma \ref{C0Lemma}. 

The H\"older  estimates still hold:  for any $\ep>0$ and $s>0$, 
$|\ell(\tau)(x,t)  - h(x,t)| \leq \ep$ for all $x \in \B_R(0) \backslash \B_{R-\si}(0) $ for all $t\in [s,T]$, where $\si= \si(\ep,s,h,n) >0$ is independent of $\tau$ if $\tau $ is sufficiently small.
Thus,  the uniform $C^0$ estimates  
$|\ell(\tau)(t) - h(t)|_{P_{\ep}} \leq C(\ep,h,n)$ where $C(\ep,h,n) \to 0$ as $\ep \to 0$ (for fixed $h$ and $n$) 
still hold.
Continuing in this way, we can assume $\ell(\tau) \in  H^{k+ \al, \frac{k}{2} +\frac{\al}{2}}(\overline{\B_R(0)} \times [0,T] )$
and $|\ell(\tau)(\cdot)- \de|_{C^0(\B_{R}(0) \times [0,T])} \leq \ep(n)$ and $\ell(\tau)(\cdot,0) = h_0$, and the uniform $C^0$ estimates hold,
$|\ell(\tau)(t) - h(t)|_{P_{\ep}} \leq C(\ep)$ where $C(\ep) \to 0$ as $\ep \to 0$ and $P_{\ep} = \overline{\B_R(0)} \backslash \B_{R-\ep}(0) \times [0,T] \cup \overline{\B_R(0)} \times [0,\ep]$.
The proof of the interior estimates, explained in \cite[Section 4]{MilesC0paper} can be used here to show that 
$$|\ell(\tau)(\cdot,t)|_{C^{s}(\B_{R'}(0) )} \leq c\big(|R'-R|, |h_0|_{C^{s}(\B_{R}(0)) }\big)$$ for all $t\in  [0,T]$, for all $R'<R$.
By Arzel\`a-Ascoli's Theorem, one is able to take a limit: up to a subsequence, we obtain a solution $\ell$, with the desired properties. \end{proof}

\section{An $L^2$ estimate for the Ricci-DeTurck flow, and applications thereof}\label{L2Lemma_Section}

In this chapter we prove a  lemma which estimates the change in the $L^2$ distance between two solutions to the
$\de$-Ricci-DeTurck flow.
Lemma \ref{L2Lemma} considers smooth solutions 
which are $\ep(n)$ close in the $L^2$ norm at time zero and agree at all times on the boundary. If we weight the $L^2$ distance at time $t$ of two smooth solutions  appropriately, then this quantity is non-increasing in time. The weight has the property that  
it is uniformly bounded between $1$ and $2$, and hence the unweighted 
 $L^2$ distance at time $t$ of the two  solutions can only increase by a factor of  at most $2$. With the help of the $L^2$-Lemma, we prove some uniqueness theorems for solutions to the $\de$-Ricci-DeTurck flow.
 
 \begin{lemma}[$L^2$-Lemma]\label{L2Lemma}  
 Let $g_{1},g_2$ be two smooth solutions to the $\de$-Ricci-DeTurck flow on  $\B_R(0)  \times (S,T)$ such that $g_l \in C^\infty(\overline{\B_R(0)}\times (S,T))$
 for $l =1,2$ and that $g_1 = g_2$ on $\partial \B_R(0) \times (S,T)$. 
 Let $h:= g_1- g_2$ and 
 $$ v:= |h|^2 \left(1+ \lambda ( |g_1-\delta|^2 + |g_2-\delta|^2)\right)\, .$$ 
 We assume that $|g_1(\cdot,t) -\de| + |g_2(\cdot,t) -\de|  \leq \ep(n)$ 
for all $t\in (S,T)$. Then for $\lambda \geq \hat \lambda(n)$ and $\varepsilon \leq \hat \ep(n)$, where $\hat \ep(n)$ is  sufficiently small and $\hat \lambda (n)$ sufficiently large, 
 it holds for $t\in (S,T)$ that
$$ \partt \int_{\B_R(0)}v\, dx \leq 0\, .$$
\end{lemma}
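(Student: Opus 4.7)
The plan is to compute $\partial_t \int v\,dx$ by combining the evolution of $|h|^2$ with the evolution of the weight $w := 1+\la(|g_1-\de|^2 + |g_2-\de|^2)$, integrate by parts in space (using the vanishing of $h$ on $\partial \B_R(0)$), and then show that the principal negative term
$$-2\int_{\B_R(0)} w\,|Dh|^2_{g_1}\,dx \;-\; 2\la\sum_{i=1}^2 \int_{\B_R(0)} |h|^2\,|D(g_i-\de)|^2_{g_i}\,dx$$
absorbs all error terms provided $\la\geq \hat\la(n)$ is large and $\ep(n)\leq \hat\ep(n)$ is small. First I would subtract the $\de$-Ricci-DeTurck equations \eqref{eq:RicciDeTurckdelta} for $g_1$ and $g_2$. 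Writing $g_1^{ab}\partial_a\partial_b g_1 - g_2^{ab}\partial_a\partial_b g_2 = g_1^{ab}\partial_a\partial_b h + (g_1^{ab}-g_2^{ab})\partial_a\partial_b g_2$ and expanding the quadratic gradient terms via the telescoping identities $g_1^{-1}-g_2^{-1} = -g_1^{-1}\,h\,g_2^{-1}$ and $Dg_1-Dg_2=Dh$, one obtains schematically $\partial_t h = g_1^{ab}\partial_a\partial_b h + A*h + B*Dh$, where $|A|\leq c(n)(|D^2g_2|+|Dg_1|^2+|Dg_2|^2)$ and $|B|\leq c(n)(|Dg_1|+|Dg_2|)$; all constants depend only on $n$ since $|g_i-\de|\leq \ep(n)$. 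Pairing with $2h$ and using the Kato-type identity yields
$$\partial_t|h|^2 = g_1^{ab}\partial_a\partial_b|h|^2 - 2|Dh|^2_{g_1} + E,$$
with $|E|\leq c(n)\bigl(|h|^2|D^2g_2| + |h|^2(|Dg_1|^2+|Dg_2|^2) + |h||Dh|(|Dg_1|+|Dg_2|)\bigr)$. The same computation applied to $g_i-\de$ gives $\partial_t|g_i-\de|^2 = g_i^{ab}\partial_a\partial_b|g_i-\de|^2 - 2|Dg_i|^2_{g_i} + E_i$ with $|E_i|\leq c(n)\ep(n)|Dg_i|^2$.

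Assembling $\partial_t v = w\,\partial_t|h|^2 + \la|h|^2\sum_i \partial_t|g_i-\de|^2$, integrating over $\B_R(0)$ and integrating by parts the two second-order operators, all boundary contributions vanish: in $\int wg_1^{ab}\partial_a\partial_b|h|^2$ the boundary integrand carries $\partial_b|h|^2=2\langle h,\partial_b h\rangle$, which is zero on $\partial\B_R(0)$ because $h=0$ there, and in $\int |h|^2 g_i^{ab}\partial_a\partial_b|g_i-\de|^2$ the prefactor $|h|^2$ itself vanishes on the boundary. After IBP I obtain the principal negative term displayed above plus error contributions of three types: (i) $|h|^2(|Dg_1|^2+|Dg_2|^2)$, (ii) $|h||Dh|(|Dg_1|+|Dg_2|)$, and (iii) $|h|^2|D^2g_2|$. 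Type (iii) is the only one containing second derivatives of the data and must be handled by a further integration by parts: writing it as $\int w h^{ij}(g_1^{ab}-g_2^{ab})\partial_a\partial_b(g_2)_{ij}\,dx$, the coefficient $w h^{ij}(g_1^{ab}-g_2^{ab})$ vanishes on $\partial\B_R(0)$ (both because $h=0$ and because $g_1=g_2$ there), so IBP converts it into a combination of type (i) and type (ii) terms. Young's inequality $|h||Dh||Dg_i|\leq \tfrac12|Dh|^2+\tfrac12|h|^2|Dg_i|^2$ then splits every type (ii) contribution into a $|Dh|^2$ piece (absorbed by $-2\int w|Dh|^2_{g_1}$, using $w\geq 1$ and $(1-\ep(n))\de\leq g_1$) and a type (i) piece. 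The resulting type (i) contributions are bounded by a dimensional multiple of $\int |h|^2(|Dg_1|^2+|Dg_2|^2)$, and so are absorbed by $-2\la\sum_i\int|h|^2|Dg_i|^2_{g_i}$ once $\la\geq\hat\la(n)$. Residual pieces carry an extra factor $\ep(n)$ arising from $|g_i-\de|\leq\ep(n)$ and are absorbed once $\ep(n)\leq\hat\ep(n)$.

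The only genuinely non-routine step I anticipate is the treatment of the type (iii) term: the quantity $|D^2g_2|$ is not controlled by any of the good negative terms on its own, and its absorption hinges on the second integration by parts above, which is legitimate precisely because the coefficient $h\,(g_1^{-1}-g_2^{-1})$ carries two independent reasons to vanish on $\partial\B_R(0)$. Everything else is disciplined bookkeeping: one chooses $\hat\la(n)$ large enough to dominate the dimensional constants from Young's inequality acting on types (i)-(ii), and then $\hat\ep(n)$ small enough so that the multiplicative perturbations $|g_i^{ab}-\de^{ab}|$, $|w g_i^{ab}|$ and the $\ep_i$ from $E_i$ stay below the margin left over by that choice of $\la$.
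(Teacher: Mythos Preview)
Your proposal is correct and follows essentially the same route as the paper: compute the evolution of $|h|^2$ and of the weight, integrate over $\B_R(0)$, move all second-order terms onto $D g_l$ by integration by parts (using $h=0$ on the boundary), and absorb the resulting first-order error terms via Young's inequality into the two good negative terms. The only organisational difference is that the paper symmetrises the principal part to the single operator $\tilde h^{ab}:=\tfrac12(g_1^{ab}+g_2^{ab})$ acting on $v$, which produces an additional type-(iii) term $\la|h|^2\,\hat h*(g_l-\de)*D^2 g_l$ from switching the operator on the weight; you instead keep $g_1^{ab}$ on $|h|^2$ and $g_i^{ab}$ on each $|g_i-\de|^2$, so you have only the one type-(iii) term coming from the $h$-equation, at the cost of integrating by parts three second-order operators rather than one. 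Both choices lead to the same absorption argument, and your ordering ``choose $\hat\la(n)$ large, then $\hat\ep(n)$ small'' is equivalent to the paper's coupling $\la=\ep^{-1/2}$.
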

Before proving the lemma, we state and prove two corollaries of this estimate.
\begin{cor}\label{CorL2Lemma}
Let $g_1,g_2$ be two smooth solutions to the $\de$-Ricci-DeTurck flow on  $\B_R(0)  \times (0,T)$ such that $g_l \in C^\infty(\overline{\B_R(0)}\times (0,T)) \cap C^0(\overline{\B_R(0) }\times [0,T))$ for $l =1,2$ and that $g_1 = g_2$ on $\partial \B_R(0) \times [0,T)$ and $g_1(\cdot,0) = g_2(\cdot,0)$. 
We assume further that $|g_1(\cdot,t) -\de| + |g_2(\cdot,t) -\de|  \leq \ep(n)$ 
for all $t\in (0,T)$.  
Then $g_1 =g_2$ on $\B_R(0) \times [0,T).$
\end{cor}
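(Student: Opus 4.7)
The plan is to apply Lemma \ref{L2Lemma} on slightly shrunken time intervals and then let the left endpoint tend to $0$. Fix an arbitrary $T' \in (0,T)$. For each $S \in (0,T')$, the restrictions of $g_1, g_2$ to $\overline{\B_R(0)} \times [S,T']$ are smooth, agree on $\partial \B_R(0) \times [S,T']$, and satisfy $|g_l(\cdot,t)-\de| \leq \ep(n) \leq \hat\ep(n)$. Thus, setting $h := g_1 - g_2$ and
\[
v := |h|^2\bigl(1+\lambda(|g_1-\de|^2+|g_2-\de|^2)\bigr)
\]
with $\lambda = \hat\lambda(n)$, Lemma \ref{L2Lemma} gives $\partial_t \int_{\B_R(0)} v\, dx \leq 0$ on $(S,T')$. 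Integrating yields
\[
\int_{\B_R(0)} v(\cdot,t)\, dx \;\leq\; \int_{\B_R(0)} v(\cdot,S)\, dx, \qquad t \in [S,T').
\]

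Next I would use the continuity assumption $g_l \in C^0(\overline{\B_R(0)}\times[0,T))$ together with the hypothesis $g_1(\cdot,0)=g_2(\cdot,0)$ to conclude that $h(\cdot,S) \to 0$ uniformly on $\overline{\B_R(0)}$ as $S \downto 0$. Since the weight factor $1+\lambda(|g_1-\de|^2+|g_2-\de|^2)$ is pointwise bounded above by $1+2\lambda\ep(n)^2 \leq 2$, it follows that
\[
\int_{\B_R(0)} v(\cdot,S)\, dx \;\leq\; 2\,\Vol(\B_R(0))\, \sup_{\B_R(0)}|h(\cdot,S)|^2 \;\longrightarrow\; 0 \qquad \text{as } S \downto 0.
\]
Combining this with the previous monotonicity inequality gives $\int_{\B_R(0)} v(\cdot,t)\, dx = 0$ for every $t \in (0,T')$.

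Finally, since $v \geq |h|^2 \geq 0$ pointwise, this forces $h(\cdot,t) \equiv 0$ on $\B_R(0)$ for each $t \in (0,T')$, i.e.\ $g_1 = g_2$ on $\B_R(0) \times (0,T')$; continuity at $t=0$ then upgrades this to $\B_R(0)\times[0,T')$, and since $T' < T$ was arbitrary we obtain $g_1 \equiv g_2$ on $\B_R(0) \times [0,T)$.

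The argument has no real obstacle once the $L^2$-lemma is in hand; the only subtlety is that Lemma \ref{L2Lemma} requires smoothness up to the spatial boundary and cannot be applied directly at $t = 0$ where we only have $C^0$ regularity in time, which is precisely why we integrate on $[S,T']$ and pass to the limit $S \downto 0$ using uniform continuity. The closeness hypothesis $|g_l - \de| \leq \ep(n)$ enters both to verify the applicability of Lemma \ref{L2Lemma} and to ensure the weight in $v$ stays uniformly bounded, so that $L^\infty$ smallness of $h(\cdot,S)$ transfers to $L^2$ smallness of $v(\cdot,S)$.
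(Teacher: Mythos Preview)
Your proof is correct and follows essentially the same approach as the paper: apply Lemma \ref{L2Lemma} on intervals $[S,T')$ bounded away from $t=0$, use the $C^0$-continuity at $t=0$ together with the uniform boundedness of the weight to show $\int_{\B_R(0)} v(\cdot,S)\,dx \to 0$ as $S\downto 0$, and conclude that the integral vanishes for all positive times. Your version is slightly more explicit about fixing $T'<T$ and about why the weight is bounded, but the argument is the same.
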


\smallskip

\begin{rmk}

\cite[Proposition 7.51]{CLN} shows a uniqueness result of the Ricci-DeTurck flow with a background metric $g$ with bounded curvature for solutions $(g(t))_{t\in(0,1)}$ that behave as follows: there exists a positive constant $A$ such that $A^{-1}g\leq g(t)\leq Ag$ and $|\nabla^gg(t)|+\sqrt{t}|\nabla^{g,2}g(t)|\leq A$ for all $t\in(0,1)$. Its proof is based on the maximum principle. Corollary \ref{CorL2Lemma} assumes a stronger condition on the closeness to the background Euclidean metric but it does not assume any a priori bounds on the first and second covariant derivatives: the  proof is based on energy estimates.

\end{rmk}
\begin{proof}[Proof of Corollary \ref{CorL2Lemma}] 
Let $h := g_{1}-g_{2}$ and  $$ v:= |h|^2 \left(1+ \lambda ( |g_{1}-\delta|^2 + |g_{2}-\delta|^2)\right)\, .$$
From the assumptions, we know that 
$ |v(\cdot,0)|  =0$ 
 on  $\overline{\B_R(0)}$ and hence 
 \begin{equation*}
 \int_{\B_R(0)} v(x,\tau)\, dx \leq \si(\tau), 
 \end{equation*}
  where $\si(\tau)$ tends to $0$ with $\tau \downto 0,$ in view of the continuity of $v$.
We also have $ g_1(\cdot,t) = g_2(\cdot,t)$ on $\boundary \B_R(0)$ for all $t \in [0,T]$, and so
 Lemma \ref{L2Lemma} implies $ \int_{\B_R(0)} v(x,t) \,dx \leq \si(\tau)$ for all $t\in (\tau,T)$.
Taking a limit $\tau \downto 0$, we see $ \int_{\B_R(0)} v(x,t)\, dx  =0$ for  all 
$t\in (0,T)$.
 This implies $g_1(\cdot,t) = g_2(\cdot,t)$ for all $t\in [0,T)$ as required.
\end{proof}
By slightly modifying the previous proof, we can also show the following uniqueness statement.
\begin{cor}\label{Cor2L2Lemma}
Let $h$ be a smooth solution to the $\de$-Ricci-DeTurck flow  on  $\B_R(0)  \times (0,T)$ such that $h\in C^\infty(\overline{\B_R(0) }\times (0,T)) \cap C^0(\overline{\B_R(0) }\times [0,T))$  and assume $h_0 \in C^{\infty}(\B_R(0))$ and let $\ell$ be the solution constructed in Theorem \ref{Dirichlet3}, with parabolic boundary data defined by $h|_P$. We assume further that $|h(\cdot,t) -\de| \leq \ep(n)$ for all $t\in [0,T)$. Then $\ell =h$.
\end{cor}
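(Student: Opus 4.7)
The plan is to approximate $\ell$ by the smooth-up-to-the-boundary solutions $\ell(\tau_k)$ that appear in the construction of $\ell$ in Theorem \ref{Dirichlet3}, to apply the $L^2$-Lemma to each pair $(h,\ell(\tau_k))$ on $\overline{\B_R(0)}\times (\tau_k,T)$, and then to send $\tau_k\downto 0$. Recall that each $\ell(\tau)$ lies in $\bigcap_k H^{k+\al,k/2+\al/2}(\overline{\B_R(0)}\times[0,T])$, hence is smooth up to the spatial boundary, satisfies $|\ell(\tau)(\cdot,t)-\de|\leq \ep(n)$ for all $t$, and has boundary data $h(\tau)|_P$ which coincides with $h|_P$ on $\boundary \B_R(0)\times[\tau,T]$ since $h(\tau)\equiv h$ there by construction. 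Moreover, along a subsequence, $\ell(\tau_k)\to \ell$ smoothly on compact subsets of $\B_R(0)\times(0,T]$ by the uniform interior estimates established inside the proof of Theorem \ref{Dirichlet3}.

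First I would fix such a sequence $\tau_k\downto 0$ and apply Lemma \ref{L2Lemma} to $(g_1,g_2)=(h,\ell(\tau_k))$ on the time interval $(\tau_k,T)$; this is permitted because $h=\ell(\tau_k)$ on $\boundary \B_R(0)\times[\tau_k,T]$ and both metrics are $\ep(n)$-close to $\de$ in $C^0$. The lemma then yields the monotonicity of
\[ I_k(t):=\int_{\B_R(0)} |h-\ell(\tau_k)|^2\bigl(1+\la(|h-\de|^2+|\ell(\tau_k)-\de|^2)\bigr)\,dx \]
on $(\tau_k,T)$, in particular $I_k(t)\leq I_k(\tau_k)$ for all such $t$.

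Next I would show $I_k(\tau_k)\to 0$ as $k\to\infty$. For $h$, the continuity assumption on $\overline{\B_R(0)}\times[0,T)$ gives $|h(\cdot,\tau_k)-h_0|\to 0$ uniformly on $\overline{\B_R(0)}$. For $\ell(\tau_k)$, the uniform-in-$\tau$ bound $|\ell(\tau)(\cdot,t)-h_0|\leq \hat C(t,h,n)$ with $\hat C(t,h,n)\to 0$ as $t\downto 0$ — proved in Theorem \ref{Dirichlet3} via Lemma \ref{C0Lemma} — yields $|\ell(\tau_k)(\cdot,\tau_k)-h_0|\to 0$ uniformly. Since the weight $1+\la(\cdots)$ is bounded above by a universal constant, combining the two uniform estimates gives $I_k(\tau_k)\to 0$.

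Finally, for any $0<R'<R$ and $t\in(0,T)$, and for all $k$ large enough that $\tau_k<t$, the monotonicity gives $\int_{\B_{R'}(0)}|h(x,t)-\ell(\tau_k)(x,t)|^2\,dx\leq I_k(t)\leq I_k(\tau_k)$; letting $k\to\infty$ and using the smooth convergence $\ell(\tau_k)\to \ell$ on $\overline{\B_{R'}(0)}\times\{t\}$ forces $\int_{\B_{R'}(0)}|h(x,t)-\ell(x,t)|^2\,dx=0$, so $h\equiv \ell$ on $\B_{R'}(0)\times\{t\}$. Since $R'<R$ and $t\in(0,T)$ are arbitrary and both solutions extend continuously to $\overline{\B_R(0)}\times[0,T)$, one concludes $h\equiv\ell$ on all of $\overline{\B_R(0)}\times[0,T)$. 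The main technical obstacle is that Corollary \ref{CorL2Lemma} cannot be applied directly, because $\ell$ is only $C^0$ up to the spatial boundary, which is why the $L^2$ identity has to be derived on the regularised approximations $\ell(\tau_k)$, for which integration by parts up to the boundary is justified, and then passed to the limit.
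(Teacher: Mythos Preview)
Your proof is correct and follows essentially the same route as the paper: compare $h$ with the approximating solutions $\ell(\tau_k)$ from the construction in Theorem \ref{Dirichlet3}, apply the $L^2$-Lemma on $(\tau_k,T)$ (where both are smooth up to the boundary and agree on $\partial\B_R(0)$), use the uniform-in-$\tau$ estimate $|\ell(\tau)(\cdot,t)-h_0|\leq \hat C(t,h,n)$ from Lemma \ref{C0Lemma} to show the initial $L^2$ discrepancy vanishes, and pass to the limit. The paper writes the weight with $h(\tau)$ in place of $h$, but since $h(\tau)\equiv h$ on $[\tau,T]$ this is the same quantity on the interval where the $L^2$-Lemma is applied.
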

\begin{proof}
Let $h(\tau)$ be the modified metric defined in the proof of Theorem \ref{Dirichlet3}, and $\ell(\tau)$  the solutions defined there.
Let 
\begin{equation*}
 v(\tau) := |h(\tau) - \ell(\tau)|^2 \left(1+ \lambda ( |h(\tau)-\delta|^2 + |\ell(\tau)-\delta|^2)\right).
 \end{equation*}
From the construction of $h(\tau)$ we know that 
$ |v(\tau)(\cdot,\tau)|  \leq \si(\tau)$ on  $\B_R(0),$ where $\si(\tau) \to 0$ with $\tau \downto 0.$ This implies 
$$ \int_{\B_R(0)} v(\tau)(x,\tau)\, dx \leq \si(\tau), $$ where $\si(\tau)\to 0$ with $\tau \downto 0.$ 
Since  $h = h(\tau) = \ell(\tau)$ on $\boundary \B_R(0)$ for all $t \in [\tau,T]$ and the solution $\ell(\tau)$ is $C^k$ (in space and time) up to and including the boundary, we can use 
Lemma \ref{L2Lemma} to conclude $ \int_{\B_R(0)} v(\tau)(x,t) \,dx \leq \si(\tau)$ for all $t\in (\tau,T)$.
Taking a limit $\tau \downto 0$, we see $ \int_{\B_R(0)} v(x,t) dx \leq 0$ for  all 
$t\in (0,T)$, with $ v := |h-\ell|^2 \left(1+ \lambda ( |h-\delta|^2 + |\ell-\delta|^2)\right)\, $. This implies $h(\cdot,t) = \ell(\cdot,t)$ for all $t\in [0,T)$ as required.
\end{proof}

\begin{proof}[Proof of Lemma \ref{L2Lemma}]

In the following, we will assume that $\ep(n)$ is a small positive constant, and $\la(n) = 1/\sqrt{\ep(n)}  $ is a large constant, which  satisfies
$\lambda(n) \ep(n) = \sqrt{\ep(n)} =: \si(n)$ per definition.

From \eqref{eq:RicciDeTurckdelta} we have for $g_{l},$ $l\in \{1,2\}$:
$$ \partt g_l = g_l^{ab} \partial_a \partial_b g_l + g_l^{-1} * g_l^{-1}*D g_l *D g_l,$$
and
$$ \partt |g_l-\delta|^2 \leq g_l^{ab} \partial_a \partial_b |g_l-\delta|^2 - \frac{2}{1+\varepsilon} |D g_l|^2 .$$
Summing over $l=1,2$, and writing $\tilde{h}^{ab}:= \frac 12 \left(g_1^{ab}+g_2^{ab}\right)$ and $\hat{h}^{ab}:= \frac 12 \left(g_1^{ab}-g_2^{ab}\right)$, we get 
\begin{align*}
\partt ( |g_1-\, &\de|^2 + |g_2 - \de|^2 ) \\
&  \leq g_1^{ab} \partial_a \partial_b |g_1-\delta|^2 + g_2^{ab} \partial_a \partial_b |g_2-\delta|^2 - 2(1-\ep(n)) |D g_1|^2 - 2(1-\ep(n)) |D g_2 |^2\\
&  = {\ti h}^{ab}  \partial_a \partial_b ( |g_1-\delta|^2 + |g_2-\delta|^2)  + \hat  h^ {ab} \partial_a \partial_b ( |g_1-\delta|^2 - |g_2-\delta|^2)\\
& \ \ \ - 2(1-\ep(n))|D g_1|^2- 2(1-\ep(n))|D g_2 |^2\\
& = {\ti h}^{ab}  \partial_a \partial_b ( |g_1-\delta|^2 + |g_2-\delta|^2)  + \sum_{l=1}^2(\hat h *  (g_{l} -\de)* D^2 g_l + 
\hat h *  D  g_l * D g_l)\\
& \ \ \ - 2(1-\ep(n))|D g_1|^2- 2(1-\ep(n)) |D g_2 |^2\\
& \leq {\ti h}^{ab}  \partial_a \partial_b ( |g_1-\delta|^2 + |g_2-\delta|^2)
+ \sum_{l=1}^2(\hat h *  (g_{l} -\de)* D^2 g_l)  \\
&\ \  \ - 2(1-2\ep(n)) |D g_1|^2- 2(1-2\ep(n)) |D g_2 |^2,
\end{align*}
in view of the fact that $g_l$ is $\ep$ close to $\de$ for $l=1,2$.

We obtain for the difference $h = g_1-g_2$ that 
\begin{equation*}
\begin{split}
\partt h &=  g_1^{ab} \partial_a \partial_b g_1 + g_1^{-1} * g_1^{-1}*D g_1 *D g_1\\
&\ \ \ - g_2^{ab} \partial_a \partial_b g_2- g_2^{-1} * g_2^{-1}*D g_2 *D g_2 \\
&= \frac 12 \left(g_1^{ab}+g_2^{ab}\right)\partial_a \partial_b h + \frac 12 \left(g_1^{ab}-g_2^{ab}\right)\partial_a \partial_b (g_1+g_2)\\
&\ \ \ + (g_1^{-1} - g_2^{-1})* g_1^{-1} * D g_1*D g_1 + g_2^{-1}* (g_1^{-1}-g_2^{-1}) * D g_1*D g_1 \\
&\ \ \ + g_2^{-1}* g_2^{-1} * D h *D g_1 +  g_2^{-1}* g_2^{-1} * D g_2 *D h,
\end{split}
\end{equation*}
which we can write as 
\begin{equation*}
\begin{split}
\partt h &= \tilde{h}^{ab}\partial_a \partial_b h + \hat{h}^{ab}\partial_a \partial_b (g_1+g_2)\\
&\ \ \ + \hat{h}* g_1^{-1} * D g_1*D g_1 + g_2^{-1}* \hat{h}* D g_1*D g_1 \\
&\ \ \ + g_2^{-1}* g_2^{-1} * D h *D g_1 +  g_2^{-1}* g_2^{-1} * D g_2 *D h.
\end{split}
\end{equation*}
This implies that
\begin{equation*}
\begin{split}
\partt |h|^2 &\leq \tilde{h}^{ab}\partial_a \partial_b |h|^2 - \frac{2}{1+\ep} |D h|^2 + h * \hat{h} *D^2 (g_1+g_2)\\
&\ \ \ + h*\hat{h}* g_1^{-1} * D g_1*D g_1 + h*g_2^{-1}* \hat{h}* D g_1*D g_1 \\
&\ \ \ + h*g_2^{-1}* g_2^{-1} * D h *D g_1 + h* g_2^{-1}* g_2^{-1} * D g_2 *D h\,,
\end{split}
\end{equation*}
in view of the fact that  $\ti h$ is $\ep$ close to $\de$.
We now consider the test-function
$$ v:= |h|^2 \left(1+ \lambda ( |g_1-\delta|^2 + |g_2-\delta|^2)\right).\, $$
%where $\lambda(n)= 1/\sqrt{\ep(n)}$. We have $\lambda(n) \ep(n) \leq \si = \sqrt{\ep}.$
 We obtain
\begin{equation*}
\begin{split}
\partt v &\leq  \tilde{h}^{ab}\partial_a \partial_b v -2 \lambda \tilde{h}^{ab} \partial_a |h|^2 \partial_b ( |g_1-\delta|^2 + |g_2-\delta|^2)\\
&\ \ \ -  2\lambda(1- 2\varepsilon(n)) |h|^2 \left(|D g_1|^2 + |D g_2|^2\right)\\
&\ \ \   - 2(1- 2\ep(n)) \left(1+ \lambda ( |g_1-\delta|^2 + |g_2-\delta|^2)\right) |D h|^2\\
 &\ \ \ + \left(1+ \lambda ( |g_1-\delta|^2 + |g_2-\delta|^2)\right) \Big( h * \hat{h} *D^2 (g_1+g_2)
 + h*\hat{h}* g_1^{-1} * D g_1*D g_1\\
 &\ \ \ + h*g_2^{-1}* \hat{h}* D g_1*D g_1  + h*g_2^{-1}* g_2^{-1} * D h *D g_1 + h* g_2^{-1}* g_2^{-1} * D g_2 *D h\Big)
 \\[-1.5ex]
 & \ \ \ + \la |h|^2 \sum_{l=1}^2(\hat h *  (g_{l} -\de)* D^2 g_l).
\end{split}
\end{equation*}
Using Young's inequality and the fact that $g_l$ is $\ep$ close to $\de$, for $l=1,2$, as well as  $|h*\hat h|  \leq c(n)|h|^2$, we see that the first order terms appearing in the large brackets may be absorbed by the two negative first order gradient  terms which appear just  before the large brackets, if $\lambda  \geq \Lambda(n)$, where $\Lambda(n)$  is sufficiently large. That is, we have
\begin{equation}
\begin{split}
\partt v &\leq  \tilde{h}^{ab}\partial_a \partial_b v - {2} \lambda \tilde{h}^{ab} \partial_a |h|^2 \partial_b ( |g_1-\delta|^2 + |g_2-\delta|^2)\\
&\ \ \ - \frac{3}{2}\lambda |h|^2 \left(|D g_1|^2 + |D g_2|^2\right)  - \frac 3 2 |D h|^2\\
 &\ \ \ + \left(1+ \lambda ( |g_1-\delta|^2 + |g_2-\delta|^2)\right) \Big( h * \hat{h} *D^2 (g_1+g_2)\Big)\, \\[-1ex] 
&  \ \ \ + \la |h|^2 \sum_{l=1}^2(\hat h *  (g_{l} -\de)* D^2 g_l).\label{evo-equ-v}
\end{split}
\end{equation}
The second term on the right-hand side of (\ref{evo-equ-v}) can be estimated as follows:
\begin{equation*}
\begin{split}
 \left|2 \lambda \tilde{h}^{ab} \partial_a |h|^2 \partial_b ( |g_1-\delta|^2 + |g_2-\delta|^2) \right|&\leq  \la \Big|\ti h *D h * h *  ( |g_1-\de| + |g_2-\de|)\\
 & \qquad\qquad \qquad\qquad\qquad\qquad* (D g_1 + D g_2) \Big|\\
 &\leq c(n) \ep(n) \la  |h||D h| (|D g_1| + |D g_2|).
 \end{split}
 \end{equation*}
Therefore,  it can also be absorbed by the negative terms just before the big brackets, in view of the fact that $\ep(n) \la  \leq \sqrt{\ep(n)}$. This leads to
\begin{equation*}
\begin{split}
\partt v &\leq  \tilde{h}^{ab}\partial_a \partial_b v -   \lambda |h|^2 \left(|D g_1|^2 + |D g_2|^2\right)  -   |D h|^2\\
 &\ \ \ + \left(1+ \lambda ( |g_1-\delta|^2 + |g_2-\delta|^2)\right) \Big( h * \hat{h} *D^2 (g_1+g_2)\Big)\,  \\[-1ex] 
 & \ \ \ + \la |h|^2 \sum_{l=1}^2\big(\hat h *  (g_{l} -\de)* D^2 g_l\big).
\end{split}
\end{equation*}
In order to estimate the second order terms appearing in the equation (\ref{evo-equ-v}), we integrate over $\B:=\B_R(0)$:
\begin{equation*}
\begin{split}
\partt \int_\B v &\leq   \int_{\B}\tilde{h}^{ab}\partial_a \partial_b v   - \lambda \int_{\B} |h|^2 \left(|D g_1|^2 + |D g_2|^2\right)  -  \int_{\B} |D h|^2\\
& \ \ \ + \int_{\B} \left(1+ \lambda ( |g_1-\delta|^2 + |g_2-\delta|^2)\right) ( h * \hat{h} *D^2 (g_1+g_2)) \\
& \ \ \ + \int_{\B}  \la \sum_{l=1}^2|h|^2 (\hat h *  (g_{l} -\de)* D^2 g_l).
\end{split}
\end{equation*}
Since $D v$ and $h$ are zero on the boundary of $\B$, we obtain no boundary terms when integrating the first and last two terms on the right hand side of the above by parts. Doing so, we get
\begin{equation*}
\begin{split}
\partt \int_{\B} v &\leq   -\int_{\B}\partial_a \tilde{h}^{ab}\partial_b v   - \lambda \int_{\B} |h|^2 \left(|D g_1|^2 + |D g_2|^2\right) - \int_{\B} |D h|^2\\
& \ \ \ + \int_{\B} D \Big( \big(1+ \lambda ( |g_1-\delta|^2 + |g_2-\delta|^2)\big) * h * \hat{h} \Big )    *  D (g_1+g_2) \\
& \ \ \ + \int_{\B} \sum_{l=1}^2\la 
D  \Big( |h|^2( \hat h \ast  (g_{l} -\de)) \Big) * D g_l\\
& =: A+ B +C +D + E\, . 
\end{split}
\end{equation*}
 Note also that $|\hat h| \leq c(n) |h|$. 
We estimate the integrand of $A$ as follows:
\begin{equation*}
\begin{split}
 |\partial_a (\ti h)^{ab}  \partial_b v| & \leq c(n) ( |D g_1| + |D g_2|)|D v| \\
&  = c(n)( |D g_1| + |D g_2|) \left|D (|h|^2(1+ \la ( |g_1-\de|^2 + 
|g_2- \de|^2)))\right|\\
& \leq c(n)( |D g_1| + |D g_2|) \Big(  2|h||D h| 
+ |h|^2 \la \ep(n) ( |D g_1| + |D g_2|) \Big) \\
& \leq c(n)( |D g_1| + |Dg_2|)|h||D h|
+ |h|^2 \la \ep(n) ( |D g_1| + |D g_2|)^2,
\end{split}
\end{equation*}
and hence the integral $A$ can be absorbed by the integrals $B$ and $C$.
In estimating the integral of $D,$ we will use  
$$|D \hat{h}| \leq c(n) (|D h| + |h| |D g_1| + |h| |D g_2|),$$ 
the validity of which can be seen by writing,
$\hat h = \frac{1}{2}( (g_1)^{-1} - (g_2)^{-1})  = 
\frac{1}{2} g_1^{-1}(g_2 -g_1)g_2^{-1} = -\frac{1}{2} g_1^{-1} h g_2^{-1}$,  differentiating, and keeping in mind that $g_1$ and $g_2$ are $\ep$ close to $\de$.

We estimate the integrand of $D$ as follows:
\begin{equation*}
\begin{split}
 D \Big( \big(1+ \lambda ( |g_1-\delta|^2 + |g_2-\delta|^2&)\big) * h * \hat{h} \Big )    *  D (g_1+g_2)  \\
& = D \Big( \big(1+ \lambda ( |g_1-\delta|^2 + |g_2-\delta|^2)\big)  \Big)
* h * \hat h *  D (g_1+g_2) \\
& \ \ \ + 
\Big( \big(1+ \lambda ( |g_1-\delta|^2 + |g_2-\delta|^2)\big)  \Big)
D (h * \hat h) *  D (g_1+g_2) \\
& \ \ \ \leq c(n) \ep(n) \la |h|^2 (|D g_1|^2 + |D g_2|^2)\\
& \ \ \ + c(n) \Big(|Dh||h| + |h|^2(|D g_1 | + |D g_2|)  \Big) ( |D g_1| + |D g_2|),
\end{split}
\end{equation*}
and hence the integral $D$ can also be absorbed by the integrals $B$ and $C$.
We estimate the final integral of $E$ in a similar way:
the integrand of $E$ can be estimated by
\begin{equation*}
\begin{split}
 \Big|\sum_{l=1}^2\la 
D  \big( |h|^2 (\hat h *  (g_{l} -\de) )\big) * D g_l\Big| 
& \leq \la c(n)  |h|^2|D h|\ep(n) ( |D g_1| + |D g_2|) \\[-2ex]
&\ \ \ + \la c(n) |h|^3  ( |D g_1|^2 + |D g_2|^2)
\end{split}
\end{equation*}
and hence the integral $E$ can also be absorbed by the integrals $B$ and $C$, in view of the fact that $|h| \leq \ep(n)$.
The result is 
$$\partt \int_{\B} v \leq 0\, ,$$ 
as required.
\end{proof}

\section{Smoothness of solutions coming out of smooth metric spaces}\label{application}

Let $(M,g(t))_{t\in (0,T)}$ be a smooth solution to Ricci flow satisfying
\begin{equation}
\begin{split}
|\Rm(\cdot,t)| &\leq \frac{c_0^2}{t} \\ 
\Rc(g(t)) &\geq -1 \label{standard_est}
\end{split}
\end{equation}
for all $t\in (0,T)$.

We first give an example for an application of Theorem \ref{smoothness_of_solutions_intro} in the setting of expanding gradient Ricci solitons. As explained in the introduction, 
expanding gradient Ricci solitons coming out of smooth cones $(\R^n,d_X,o) = (\R^+ \times S^{n-1} ,dr^2 \oplus r^2 \ga,o) $ where $\ga$ is a Riemannian metric on the sphere, which is smooth and whose curvature operator has eigenvalues larger than one, are examples  of solutions  which satisfy the estimates above. 
In \cite{SchulzeSimon} examples are constructed, and \cite{DeruelleExpanders} it is shown that there is always a solution which comes out smoothly, in the sense that the convergence is in the $C^{\infty}_{loc}$ sense away from the tip.
The uniqueness of such solitons is unknown. Below, we make precise the meaning of an expanding gradient Ricci soliton which comes out of a metric cone.

Recall that an expanding gradient Ricci soliton is a triple $(M^n,g,\nabla^gf)$ where $M$ is a $n$-dimensional Riemannian manifold with a complete Riemannian metric $g$ and a smooth potential function $f:M\rightarrow \R$ satisfying the equation 
\begin{eqnarray}
\Rc(g)-\Hess f=-\frac{g}{2}.\label{stat-sol-equ}
\end{eqnarray}
Also, to each expanding gradient Ricci soliton, one may associate a self-similar solution of the Ricci flow. Indeed, let $(\psi_t)_{t>0}$ be the flow generated by $-\nabla^g f/t$ such that $\psi_{t=1}=Id_M$ and define $g(t):=t\psi_t^*g$ for $t>0$. Then $(M,g(t))_{t>0}$ defines a Ricci flow thanks to (\ref{stat-sol-equ}).
Next, we notice that if an expanding gradient Ricci soliton $(M^n,g(t),p)_{t>0}$ admits a limit as $t$ tends to $0$ in the pointed Gromov-Hausdorff sense for some point $p$ that lies in the critical set of the potential function $f$ then this limit must be the asymptotic cone in the sense of Gromov since $(M^n,g(t),p)$ is isometric to $(M^n,tg,\psi_t(p)=p)$ for $t>0$ as pointed metric spaces. Therefore, there is a space-time dictionary for expanding gradient Ricci solitons: the initial condition can be interpreted as the asymptotic cone at spatial infinity and vice versa in case the potential function has a critical point.

Returning to the general setting, we assume further that we are in the setting of Lemma  \ref{continuouslemma}. That is, that $d_0$ written in distance coordinates $F_0$ near a point $x_0$ is generated by a continuous Riemannian  metric $\ti g_0$ on a Euclidean ball.
Then, using Lemma \ref{curvaturedecaylemma}, we see that we may assume that 
$|\Rm(\cdot,t)| \leq \frac{\ep(t)}{t}$ for all $t\in (0,T)$, where $\ep:[0,1] \to \R^+_0$ is a  non-decreasing function with $\ep(0) =0$, and that the improved distance estimates
\begin{equation}
\begin{split}
 &d_r + \ep(t) \sqrt{t-r}  \geq d_t  \geq d_r - \ep(t)\sqrt{t-r} \mbox{ for all } t\in [r,T] \cr
& \mbox{ on } B_{d_0}(x_0, v) \Subset B_{g(s)}(x_0,2v)  \mbox{ for all } s\in [0,T]  \label{distest3}
\end{split}
\end{equation}
hold on some fixed ball.

We now make the further restriction, that   the metric $\ti g_0$ is smooth on some Euclidean ball containing $F_0(x_0)$ in the sense of Definition \ref{smoothness_continuity_metric_spaces}.

Theorem \ref{smoothness_of_solutions_intro} shows in this case that the original Ricci flow solution comes out smoothly from some smooth initial data, if we restrict to a small enough neighbourhood of $x_0$.

%\begin{thm}\label{smoothness_of_solutions} 
%Let $(M,g(t))_{t\in (0,T]}$ be a smooth solution to Ricci flow such that $$|\Riem(g(t))|\leq \frac{c_0}{t},\quad \Rc(g(t)) \geq -1,\quad t\in(0,T),$$ and assume $B_{g(t)}(x_0,1) \Subset M$ for  $t\in(0,1]$ and  let $(B_{d_0}(x_0,r),d_0)$ be the $C^0$ limit of $(B_{g(t)}(x_0,r) ,d(g(s)) )$ as $s \downto 0$ (described at the beginning of Section \ref{bi_lip_section}). Assume further that $d_0$ has the following regularity properties at $x_0$: there exist points $a_1,\ldots, a_n \in B_{d_0}(x_0,r)$ such that the distance coordinates 
%$F_0(\cdot):= (d_0(a_1,\cdot), \ldots, d_0(a_n,\cdot))$  are $1+\ep_0$ Bi-Lipschitz
%on $B_{d_0}(x_0,\ti r)$  for some $\ti r \leq r$ 
%and so that there is a {\rm \bf smooth}  (with respect to the euclidean structure) Riemannian metric $\ti g_0$ defined on $\B_{4 \ti r}(F(x_0))$ which generates $\ti d_0$.
%Then there exists a smooth Riemannian metric $g_0$ on $B_{d_0}(x_0,\frac{\ti r}{ 8})$ such that we can extend  the solution $(B_{d_0}(x_0,\frac{\ti r}{ 8} ),g(t))_{t\in (0,T)}$  to 
%a smooth solution $(B_{d_0}(x_0,\frac{\ti r}{ 8}),g(t))_{t\in [0,T)}$  by defining $g(0) = g_0$.
%\end{thm}

\begin{proof}[Proof of Theorem \ref{smoothness_of_solutions_intro}]

We consider 
$$(\B_{\ti r}(0), \ti g(t))_{t\in [0,T)\cap[0,S(n,r,\ep_0)]}, $$ 
the solution to $\de$-Ricci-DeTurck flow of Theorem \ref{continuous_thm}, 
which is smooth on $\overline{\B_{\ti r}(0)} \times (0,T)\cap (0,S(n,r,\ep_0))$  and continuous on $\overline{\B_{\ti r}(0)} \times [0,T] \cap [0,S(n,r,\ep_0))$ and satisfies
$ \ti g(\cdot,0) = \ti g_0 \in C^{\infty}(\overline{\B_{\ti r}(0))}.$ 
Let 
$$\ell \in C^{\infty}(\B_{\ti r}(0) \times [0,1]) \cap C^0(\overline{\B_{\ti r}(0)} \times [0,1]) $$ 
be the solution to the $\de$-Ricci-DeTurck flow that we obtain from Theorem \ref{Dirichlet3}, if we use $h:= \ti g$ to define the parabolic boundary values.
Corollary \ref{Cor2L2Lemma} tells us that $\ell=\ti g$ and hence $\ti g  \in C^{\infty}(\B_{\ti r}(0) \times [0,1]). $

By the smoothness of $\ti g$, we see that we  have 
\begin{eqnarray}
\sup_{\B_{3\ti r/4  }(0)  \times [0,1]} |\Riem(\ti g(t))|^2 + |\grad \Riem(\ti g(t))|^2 + 
\ldots +|\grad^k \Riem(\ti g(t))|^2 \leq C_k,
\end{eqnarray}
for all $t\in [0,1]$.

By the original construction of $\ti g$, we have 
 $\ti g(t) = \lim_{i\to \infty}Z_i(t)_*(g(t))$ for all $t>0$ where the $Z_i(t)$ are  smooth diffeomorphisms for all $i \in \N$, and the limit is in the smooth sense on any compact subset of $\B_{\ti r }(0) \times (0,1]$.
Hence, we must have
\begin{eqnarray}
\sup_{B_{d_0}(x_0,\ti r/ 2) \times (0,1]} |\Riem(g(t))|^2 + |\grad \Riem( g(t))|^2 + 
\ldots + |\grad^k \Riem( g(t))|^2 \leq C_k.
\end{eqnarray}
Using the method of Hamilton, see  \cite[Section 6]{HamFor}, we see that we can extend the solution smoothly back to time $0$: there exists a smooth Riemannian metric $g_0$ defined on $B_{d_0}(x_0,\ti r/ 4)$ such that 
$(B_{d_0}(x_0,\ti r/4),g(t))_{t\in [0,1]}$ with $g(0) =g_0$ is smooth.
\end{proof}

We return to the expanding gradient Ricci soliton examples provided by \cite{SchulzeSimon} and \cite{DeruelleExpanders} discussed at the beginning of this section. By construction, they have non-negative Ricci curvature and bounded curvature at time $t=1$ which amounts to saying that the corresponding Ricci flows satisfy \eqref{standard_est}.

We make a small digression to show that if an expanding gradient Ricci soliton satisfies \eqref{standard_est} then it must have non-negative Ricci curvature.
Indeed, let $(M,g(t)=t\varphi_t^*g)_{t\in (0,\infty)}$ be an expanding gradient Ricci soliton, satisfying \eqref{standard_est} for all $t\in (0,\infty)$. This clearly means that $\Rc(g(t)) \geq 0$: if this were not the case, say $\Rc(g)(x)(v,v) = -L <0$ for some $x \in M$ and some vector $v\in T_xM$ of unit length with respect to $g$, then we must have 
$$\Rc(g(t))(x_t)(v_t,v_t)= -\frac{L}{t}g(t)(x_t)(v_t,v_t)$$ 
for all $t>0$ where $x_t:=\varphi_t^{-1}(x)$ and $v_t:=(d_{x_t}\varphi_t)^{-1}(v)$. Consequently, $\Rc(g(t))(x_t) <-1$ for  $t>0$ small enough, a contradiction.
So without loss of generality, $\Rc(g(t)) \geq 0$ and hence the asymptotic volume ratio  
$$\AVR(g(t)):= \lim_{r\to \infty} \frac{\vol(B_{g(t)}(x,r))}{r^n}$$ 
is well defined for all time $t>0$ and all points $x\in M$ by Bishop-Gromov's Theorem. Moreover, Hamilton, \cite[Proposition 9.46]{CLN}, has shown that $\AVR(g(t))$ is positive for all positive times $t$. Using the non-negativity of the Ricci curvature together with the soliton equation \eqref{stat-sol-equ}, one can show that the potential function is a proper strictly convex function. In particular, it admits a unique critical point $p$ in $M$ which is a global minimum. Since we are considering expanding gradient Ricci solitons, we know that $(M,g(t),p)$ is isometric to $(M,tg(1),p)$ as pointed metric spaces, and hence the asymptotic volume ratio $\AVR(g(t))$ is a constant independent of time $t>0$. Let $(M,d_0,o)$ be the well defined limit of $(M,d(g(t)),p)$ as $t\to 0$, the existence of which is explained in the introduction and guaranteed by \cite[Lemma 3.1]{SiTo2}. The theorem of Cheeger-Colding on volume convergence, now guarantees that the asymptotic volume ratio of
$(M,d_0,o)$ is also $\AVR(g(1))$ and that $(M,d_0,o)$ is a volume cone. In fact it is also a metric cone, due to \cite[Theorem 
7.6]{CheegerColding_Rigidity} and the fact that $(M,d_0,o)$ is the Gromov-Hausdorff limit of $(M, t d(g(1)),p)$  for any sequence $t\to 0$.
If $x_0 \in M$ is a point where $d_0$ is locally smooth, in the sense explained in Definition \ref{smoothness_continuity_metric_spaces}, then $(B_{g(0)}(x_0,r),g(t)) \to (B_{g(0)}(x_0,r),g(0))$ smoothly for some small $r >0$ as $t \to 0$, where $g(0)$ is the local (near $x_0$) smooth extension to time zero of $g(t)$.

In particular, if $(M,d_0,o)$ is a smooth cone, away from the tip $o$, in the sense that locally distance coordinates introduce a smooth structure near $x_0$ for any $x_0$ in $M$ not in the tip of the cone, then the solution comes out smoothly from the cone away from the tip.

\bibliography{NotesOnReg}
\bibliographystyle{amsplain}

\noindent {\sc AD: alix.deruelle@imj-prg.fr\\
institut de math\'ematiques de jussieu, 4, place jussieu, boite courrier 247 - 75252 paris- france}\\[1ex]
{\sc FS: f.schulze@ucl.ac.uk \\
department of mathematics, university college london, 25 gordon st, london wc1e 6bt, uk}\\[1ex]
{\sc MS: msimon@ovgu.de\\
institut f\"ur analysis und numerik (ian),
universit\"at magdeburg, universit\"atsplatz 2,
39106 magdeburg, germany}

\end{document}